\newcommand\MyBox[2]{
  \fbox{\lower0.75cm
    \vbox to 1.7cm{\vfil
      \hbox to 1.7cm{\hfil\parbox{1.4cm}{#1\\#2}\hfil}
      \vfil}%
  }%
}
\renewcommand{\geq}{\geqslant}
\renewcommand{\leq}{\leqslant}
\newcommand{\T}{\mathbb{T}}
\newcommand{\J}{\mathcal{J}}
\newcommand{\B}{\mathcal{B}}
\renewcommand{\k}{\kappa}
\newcommand{\g}{\gamma}
\renewcommand{\a}{a}
\renewcommand{\b}{\beta}
\newcommand{\A}{\mathcal{A}}
\newcommand{\R}{\mathbb{R}}
\newcommand{\Z}{\mathbb{Z}}
\newcommand{\C}{\mathbb{C}}
\newcommand{\N}{\mathbb{N}}
\newcommand{\Dx}{\partial_x}
\newcommand{\oneu}{(>0,\uparrow)}
\newcommand{\oned}{(>0,\downarrow)}
\newcommand{\minu}{(\leq0,\uparrow)}
\newcommand{\mind}{(\leq0,\downarrow)}
\newtheorem{theorem}{Theorem}[section]
\newtheorem{lemma}[theorem]{Lemma}
\newtheorem{proposition}[theorem]{Proposition}
\newtheorem{definition}[theorem]{Definition} 
\newtheorem{corollary}[theorem]{Corollary}
\newtheorem*{main-theorem}{Main Theorem}
\newtheorem*{remark*}{Remark}
\newtheorem{hypothesis}[theorem]{Hypothesis}
\numberwithin{equation}{section}
\title[Transverse instability in generalized RMKP equation]{Transverse spectral Instabilities in rotation-modified Kadomtsev-Petviashvili equation and related models}
\author[Bhavna]{Bhavna$^\ast$}
\author[Pandey]{Ashish~Kumar~Pandey$^\ast$}
\author[Semenova]{Anastassiya Semenova$^\dagger$}
\address{$^\ast$Department of Mathematics, IIIT Delhi,  110020, India\\
$^\dagger$Department of Applied Mathematics, University of Washington, Seattle, WA 98195-3925, USA}
\email{bhavnai@iiitd.ac.in, ashish.pandey@iiitd.ac.in, asemenov@uw.edu}
\date{\today}
\begin{document}

\maketitle
\begin{abstract}
The rotation-modified Kadomtsev-Petviashvili equation which is also known as the Kadomtsev–Petviashvili–Ostrovsky equation, describes the gradual wave field diffusion in the transverse direction to the direction of the propagation of the wave in a rotating
frame of reference. This equation is a generalization of the Ostrovsky equation additionally having weak transverse effects. We investigate transverse instability and stability of small periodic traveling waves of the Ostrovsky equation with respect to either periodic or square-integrable perturbations in the direction of wave propagation and periodic perturbations in the transverse direction of motion in the rotation-modified Kadomtsev-Petviashvili equation. We also study transverse stability or instability in generalized rotation-modified KP equation by taking dispersion term as general and quadratic and cubic nonlinearity. As a consequence, we obtain transverse stability or instability in two dimensional generalization of RMBO eqution, Ostrovsky-Gardner equation, Ostrovsky-fKdV equation, Ostrovsky-mKdV equation, Ostrovsky-ILW equation, Ostrovsky-Whitham etc.
 \end{abstract}
\section{Introduction}
The (2+1)-dimensional rotation-modified Kadomtsev-Petviashvili (RMKP) equation \cite{Grimshaw1985EvolutionFluid,Grimshaw1998LongOcean} is
\begin{equation}\label{e:RMKP}
    \left( u_t-\beta u_{xxx}+(u^2)_x\right)_x-\gamma u+ u_{yy}=0, ~ \gamma >0, 
\end{equation}
where, $u=u(x,y,t)$, $t\in\mathbb{R}^+$ is a temporal variable, $x,y\in\mathbb{R}$ are spatial variables. Here, $x$ represents the direction of wave propagation and $y$ represents the transverse direction to the motion of the wave. The type of dispersion is determined by the coefficient $\beta$. If the coefficient is $\beta<0$ (negative dispersion), the equation simulates gravity surface waves in a shallow water channel and internal waves in the ocean, whereas if the coefficient is $\beta>0$ (positive dispersion), the equation represents oblique magneto-acoustic waves or capillary surface waves in plasma. The parameter $\gamma>0$, which is proportional to the Coriolis force, assesses the effects of rotation. Equation \eqref{e:RMKP} simplifies to the Kadomtsev-Petviashvili (KP) equation \cite{Kadomtsev1970OnMedia} in the situation $\gamma = 0$, or without the effects of rotation.
\begin{equation}\label{e:KP}
    \left( u_t+\beta u_{xxx}+(u^2)_x\right)_x+ u_{yy}=0, 
\end{equation}and after removing $y$-term, it gets converted to the Ostrovsky equation\cite{Ostrovsky1978NonlinearOcean}
\begin{equation}\label{e:ost}
    \left( u_t+\beta u_{xxx}+(u^2)_x\right)_x-\gamma u=0.
\end{equation}
On one hand, to account for the effects of rotation, the equation \eqref{e:RMKP} can be seen as a modification of the KP equation \eqref{e:KP} while on the other hand, to account for the weak transverse effects, it can be thought of as an extension of the Ostrovsky equation \eqref{e:ost}. An interesting aspect of general class of nonlinear wave equations of type \eqref{e:RMKP}  is to consider the one-dimensional waves as solutions of these two-dimensional models. {\em Transverse stability (or instability)} in a two-dimensional model is the stability (or instability) of its one-dimensional wave with respect to the perturbations imposed which are of two-dimensions.  Our goal is to investigate the (in)stability of the $y$-independent periodic traveling waves of \eqref{e:RMKP} with respect to perturbations in two dimensions.

\subsection*{Background} The RMKP equation \eqref{e:RMKP} is extension of the well-known Korteweg-de Vries (KdV) equation in terms of dimensions and rotation factor. The KdV equation plays a noteworthy part in the explanation of nonlinear wave phenomena in weakly dispersive medium.
Numerous related equations have been discovered in the nonlinear wave theory over the years, such as the Gardner equation \cite{Krishnan2011AEquation,Xu2009AnalyticPhysics} consisting both quadratic and cubic nonlinearities (especially famous in the study of internal waves in the ocean), the modified KdV equation (mKdV) \cite{Korteweg1895Waves} describing waves in the medium with cubic nonlinearity, the Benjamin–Ono (BO) and Joseph–Kubota–Ko–Dobbs equations representing internal waves in a deep ocean \cite{Ablowitz1981SolitonsTransform}, the Kadomtsev–Petviashvili equation (KP) \cite{Kadomtsev1970OnMedia,Petviashvili2016SolitaryAtmosphere} that addressed multidimensional consequences of wave beam transverse diffraction, etc. On the same list is the Ostrovsky equation \eqref{e:ost}, which models weakly nonlinear wave patterns in the ocean while accounting for Earth rotation. In particular, wave dynamics in relaxing medium \cite{Vakhnenko1999High-frequencyMedium} are described by this equation and its reduced form (the reduced Ostrovsky equation), which omits the third derivative element. The Gardner-Ostrovsky equation, which contains both quadratic and cubic nonlinearities, can be obtained by further generalising the Ostrovsky equation \cite{Holloway1999AZone}.

In a dispersive nonlinear equation like the Ostrovsky equation, periodic traveling waves or solitary waves are of special interest. The existence of solitary wave solutions and their stability of the Ostrovsky equation and its modifications have been demonstrated in \cite{Lu2012OrbitalEquation, Liu2004StabilityEquation, Levandosky2007StabilityEquation}. In \cite{Hakkaev2017PeriodicStability, Geyer2017SpectralEquation}, periodic traveling waves of \eqref{e:ost} with general nonlinearity and generalized reduced Ostrovsky equation have been formed for small values of $\g$ and it has been demonstrated that they are spectrally stable to periodic perturbations of the same period as the wave. In \cite{Grimshaw2008Long-timeEquation}, it was found that the quasi-monochromatic wave trains of the Ostrovsky equation are modulationally unstable in a relatively “short-wave” range $k_ch<kh\ll1$ and absent in the longer wave range $kh<k_ch$, where $k$ is the wave-number, $h$ is the basin depth and $k_c=\sqrt[4]{\gamma/3|\beta|}$. This conclusion has been confirmed within more general models with cubic nonlinearity (Gardner–Ostrovsky equation \cite{Whitfield2014Rotation-inducedWaves,Whitfield2015Wave-packetEquation} and Shrira equation\cite{Nikitenkova2015ModulationalDispersions}). The authors' in \cite{Bhavna2022High-FrequencyEquation} have lately investigated that the periodic traveling waves of the Ostrovsky equation \eqref{e:ost} suffers high-frequency instability if $k>\sqrt[4]{4\gamma/\beta}$ for positive values of $\beta$ and no instability for negative values of $\beta$.

Fully two-dimensional solutions of RMKP equation \eqref{e:RMKP} still need to be thoroughly analysed. The initial steps have already been taken in this regard, namely for a very rare variant of the equation with the ``anomalous dispersion" \cite{Stepanyants2006LumpEquation,Chen2008SolitaryEquation,Esfahani2010DecayEquation,Esfahani2017StabilityRotation,Clarke2018DecayMedia,Grimshaw1990TheStudy}. In particular, as demonstrated in \cite{Esfahani2010DecayEquation}, this equation permits for fully localized solitary waves with exponential decay along the axis of transverse motion, the $y$-axis, and exponential asymptotics along the axis of motion, the $x$-axis. This is consistent with the numerical outcomes shown in \cite{Stepanyants2006LumpEquation}. 

\subsection*{Transverse stability} In a $(2+1)$-dimensional water wave equation, transverse stability (or instability) refers to the stability (or instability) of $y$-independent wave that propagates in the $x$-direction with respect to perturbations that have a nontrivial reliance on the transverse $y$-direction \cite{Haragus2011TransverseEquation,Bhavna2022TransverseEquation}. As far as the authors are aware, no conclusion has yet been made on the stability characteristics of periodic traveling waves or solitary waves of the RMKP equation \eqref{e:RMKP}. However, the transverse instability of periodic traveling waves has been studied for many similar equations, for instance, for the KP equation in \cite{Bhavna2022TransverseEquation,Hakkaev2012TransverseEquations,Johnson2010TransverseEquation,Haragus2011TransverseEquation}, for Zakharov-Kuznetsov (ZK) equation in \cite{Chen2012AEquation,Johnson2010TheEquations}. Several authors have also investigated the transverse instability of the solitary wave solutions of numerous water-wave models, see \cite{Groves2001TransverseWaves,Pego2004OnTension,Rousset2009TransverseModels,Rousset2011TransverseWater-waves}. Motivated by the importance of nonlinear waves propagation and its stability, we investigate the transverse spectral instability of the RMKP equation \eqref{e:RMKP}. We aim to study the (in)stability of the $y$-independent, that is, (1+1)-dimensional periodic traveling waves \eqref{e:expptw} of \eqref{e:RMKP} with respect to either periodic or non-periodic perturbations in the $x$-direction and always periodic in the $y$-direction. The periodic nature of the perturbations in the $y$-direction is categorized into two types: long wavelength and finite or short-wavelength.

\subsection*{One-dimensional periodic traveling waves} Since the $y$-independent form of the \eqref{e:RMKP} equation is the Ostrovsky equation \eqref{e:ost}, we are interested in small amplitude periodic traveling waves of \eqref{e:ost}. The existence of the one-parameter family of periodic traveling wave solutions of \eqref{e:ost} have been proved in \cite[Appendix~A]{Bhavna2022High-FrequencyEquation} by using Lyapunov-Schmidt Procedure, and their small amplitude exapnsion has also been calculated in \cite[Theorem~2.1]{Bhavna2022High-FrequencyEquation} which we briefly describe here. Depending on the sign of $\beta$, take into account the following wavenumbers,
\begin{enumerate}
    \item for $\beta < 0$, all $k>0$, and
    \item for $\beta > 0$, all $k>0$ but $ \left(\frac{\g}{\b n^2}\right)^{1/4}$, $2\le n\in \mathbb{N}$.
\end{enumerate}
Consequently, a one parameter family of solutions of \eqref{e:ost} exists for all such wavenumbers $k$, given by $u(x,t)=\eta(\varepsilon;k)(k(x-c(\varepsilon;k)t))$ for $\varepsilon \in \R$ and $|\varepsilon|$ sufficiently small; $\eta(\varepsilon;k)(\cdot)$ is $2\pi$-periodic, even and smooth in its argument, and $c(\varepsilon;k)$ is even in $\varepsilon$; $\eta(\varepsilon;k)$ and $c(\varepsilon;k)$ depend analytically on $\varepsilon$ and $k$. Moreover, 
\begin{equation}\label{e:expptw}
\left\{
\begin{aligned}
    \eta(\varepsilon;k)(z)=&\varepsilon\cos(z) + \varepsilon^2\eta_2\cos 2z + \varepsilon^3\eta_3\cos 3z + O(\varepsilon^4),\\
     c(\varepsilon;k)=&c_0+\varepsilon^2c_2+O(\varepsilon^4)
\end{aligned}
\right.
\end{equation}
as $\varepsilon \to 0$, 
\[
\eta_2 = \dfrac{2k^2}{3\g-12\b k^4}, \quad  \eta_3 = \dfrac{9k^2\eta_2}{8\g-72\b k^4},\quad c_0=\frac{\g}{k^2}+\b k^2,\quad c_2=\eta_2.
\]

\subsection*{Main results}
Our main findings are the following theorems, which, depending on the kind of the two-dimensional perturbations in the $x$- and $y$-directions, show the transverse stability and instability of small amplitude periodic traveling waves \eqref{e:expptw} of \eqref{e:RMKP}.

\begin{theorem}[Transverse instability]\label{t:1}
For a fixed $\g>0$ and $\b>0$, sufficiently small amplitude periodic traveling waves \eqref{e:expptw} of the RMKP equation \eqref{e:RMKP} are transversely unstable with  respect to co-periodic perturbations in the $x$-direction and, periodic with long wavelength perturbations in the $y$- direction if 
\[
k>\left|\dfrac{\g}{4\b}\right|^{1/4}.
\]
\end{theorem}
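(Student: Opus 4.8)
The plan is to reduce the transverse spectral problem to a one-parameter family of one-dimensional spectral problems in which the transverse wavenumber $\ell$ enters as a bounded, $O(\ell^2)$ perturbation, and then to follow the two eigenvalues that bifurcate from the origin as $\ell$ is switched on. Concretely, I would substitute $u(x,y,t)=\eta(k(x-ct))+v(x,y,t)$ into \eqref{e:RMKP}, linearize, pass to the traveling frame $z=k(x-ct)$, and separate variables as $v=e^{\La t}e^{i\ell y}w(z)$ with $w$ of period $2\pi$ (co-periodicity in $x$). Integrating \eqref{e:RMKP} in $x$ over a period shows the wave has zero mean, so $\partial_x$ is invertible on the relevant space and one arrives at $\La w=\L_\ell w$ on the zero-mean subspace of $\LP$, where
\[
\L_\ell=\L_0+\frac{\ell^2}{k}\,\partial_z^{-1},\qquad \L_0=\partial_z\,\mathcal{S}_0,\qquad \mathcal{S}_0\,w=c k\,w+\b k^3 w_{zz}-2k\,\eta\,w+\frac{\g}{k}\,\partial_z^{-2}w .
\]
Here $\L_0$ is exactly the linearization of the $y$-independent equation about $\eta$ in the traveling frame, $\mathcal{S}_0=\mathcal{S}_0^{\ast}$ on the zero-mean subspace (so the problem is Hamiltonian, $\L_\ell=\partial_z\mathcal{S}_\ell$ with $\mathcal{S}_\ell=\mathcal{S}_\ell^{\ast}$), and transverse instability means $\L_\ell$ has spectrum off $i\R$ for some small $\ell\neq0$.

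Next I would examine the spectrum near the origin. At $\varepsilon=0$ the operator $\L_\ell$ is a Fourier multiplier with eigenvalues $i\,[(n^2-1)(\g-\b k^4 n^2)-\ell^2]/(kn)$, $n\in\Z\setminus\{0\}$, which are all purely imaginary; for the admissible wavenumbers ($k^4\neq\g/(\b n^2)$) the value $0$ is a double eigenvalue carried by $e^{\pm iz}$, with the rest of the spectrum bounded away from $0$. For $0<\varepsilon$ small this becomes a single $2\times2$ Jordan block at $\La=0$ when $\ell=0$: translation invariance gives $\L_0\,\partial_z\eta=0$, while differentiating the profile equation in $c$ yields the identity $\mathcal{S}_0\,\partial_c\eta=-k\,\eta$, hence $\L_0\,\partial_c\eta=-k\,\partial_z\eta$, so that $\partial_z\eta$ is the eigenvector and $\partial_c\eta$ the generalized eigenvector; the adjoint $\L_0^{\ast}=-\mathcal{S}_0\partial_z$ has kernel spanned by $\eta$.

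Then I would perturb in $\ell$. Since $\frac{\ell^2}{k}\partial_z^{-1}$ is bounded and $O(\ell^2)$, a Lyapunov--Schmidt reduction onto the two-dimensional generalized kernel should produce a $2\times2$ matrix inheriting the Hamiltonian structure --- in particular with vanishing trace --- whose eigenvalues $\pm\La$ satisfy, to leading order,
\[
\La^2=\frac{\ell^2}{k}\,\frac{\langle\eta,\ \partial_z^{-1}\partial_z\eta\rangle}{\langle\eta,\ \mathcal{S}_0^{-1}\eta\rangle}+O(\ell^4)=\frac{\ell^2}{k}\,\frac{\|\eta\|^2}{\langle\eta,\mathcal{S}_0^{-1}\eta\rangle}+O(\ell^4),
\]
so long-wavelength transverse instability occurs exactly when $\langle\eta,\mathcal{S}_0^{-1}\eta\rangle>0$. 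Using $\mathcal{S}_0^{-1}\eta=-k^{-1}\partial_c\eta$ this equals $-\frac{1}{2k}\,\partial_c\|\eta\|^2$, and inserting the small-amplitude data of \eqref{e:expptw}, namely $\|\eta\|^2=\pi\varepsilon^2+O(\varepsilon^4)$ and $c=c_0+\eta_2\varepsilon^2+O(\varepsilon^4)$ with $\eta_2=2k^2/(3\g-12\b k^4)$, gives
\[
\La^2=-2\eta_2\,\varepsilon^2\ell^2\,\bigl(1+O(\varepsilon^2)\bigr)=\frac{4k^2\varepsilon^2\ell^2}{3(4\b k^4-\g)}\,\bigl(1+O(\varepsilon^2)\bigr).
\]
For $\b>0$ this is positive exactly when $k>(\g/4\b)^{1/4}=|\g/4\b|^{1/4}$, and then $\La\approx 2k\varepsilon|\ell|/\sqrt{3(4\b k^4-\g)}>0$ is an eigenvalue for every sufficiently small $\ell\neq0$, which is the asserted transverse instability.

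The step I expect to be the real obstacle is this perturbation analysis: $\L_\ell$ is non-self-adjoint and the unperturbed eigenvalue is defective, so the splitting of the $2\times2$ Jordan block must be handled with care --- one has to set up the finite-dimensional reduction rigorously, verify that the reduced matrix indeed has vanishing trace (reflecting the Hamiltonian structure), identify the coupling coefficient $\langle\eta,\partial_z^{-1}\partial_z\eta\rangle/\langle\eta,\mathcal{S}_0^{-1}\eta\rangle$ with the correct sign, and control the remainder uniformly on the regime $0<\ell\ll\varepsilon\ll1$. By contrast the Fourier computation at $\varepsilon=0$, the identity $\mathcal{S}_0\,\partial_c\eta=-k\,\eta$, and the small-amplitude expansion should all be routine given \eqref{e:expptw}.
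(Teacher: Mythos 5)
Your proposal is correct and lands on exactly the paper's criterion ($\La^2\approx-2\eta_2\varepsilon^2\ell^2>0$ iff $\eta_2<0$ iff $k>(\g/4\b)^{1/4}$ for $\b>0$), but the reduction to the $2\times2$ problem is implemented differently. The paper works throughout with the operator $\mathcal Q_\varepsilon(\rho)=k^2\partial_z(c+\b k^2\partial_z^2-2\eta)+(\g+\rho^2)\partial_z^{-1}$ on the zero-mean subspace, takes the basis $\{\psi_1,\psi_2\}$ that continues $\{\cos z,\sin z\}$ from the $\varepsilon=0$ Fourier computation, expands $\mathcal Q_\varepsilon(\rho)$ explicitly to $O(\varepsilon^2)$ using \eqref{e:expptw}, and reads off the matrix $\mathcal T_\varepsilon(\rho)=\begin{pmatrix}0&\rho^2+2\varepsilon^2k^2\eta_2\\-\rho^2&0\end{pmatrix}+O(\varepsilon^2(\rho+\varepsilon))$, whence $\La^2=-\rho^2(\rho^2+2\varepsilon^2k^2\eta_2)+\dots$; it never invokes the Jordan chain $\{\partial_z\eta,\partial_c\eta\}$ or the quantity $\partial_c\|\eta\|^2$. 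You instead exploit the exact generalized kernel at $\varepsilon>0$, $\rho=0$ (eigenvector $\partial_z\eta$, generalized eigenvector $\partial_c\eta$ via $\mathcal S_0\partial_c\eta=-k\eta$, adjoint kernel $\eta$) and derive the abstract splitting formula $\La^2\sim\ell^2\|\eta\|^2/(k\langle\eta,\mathcal S_0^{-1}\eta\rangle)$ with $\langle\eta,\mathcal S_0^{-1}\eta\rangle=-\tfrac{1}{2k}\partial_c\|\eta\|^2$, only then inserting the small-amplitude data. Your route buys a structural criterion (the sign of $\partial_c\|\eta\|^2$) that is in principle not tied to small amplitude and makes the Hamiltonian origin of the zero trace transparent; its cost is the functional-analytic care you correctly flag — $\mathcal S_0$ is nearly singular on the span of $e^{\pm iz}$ as $\varepsilon\to0$ (so $\partial_c\eta=O(1/\varepsilon)$ and $\mathcal S_0^{-1}\eta$ must be taken modulo $\ker\mathcal S_0=\operatorname{span}\{\partial_z\eta\}$), and the $O(\ell^4)$ remainder has an $\varepsilon$-independent coefficient, so the instability window is $\ell^2\lesssim\varepsilon^2|\eta_2|$, matching the paper's explicit $\rho^2<2\varepsilon^2k^2|\eta_2|$. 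The paper's direct Galerkin computation avoids these subtleties at the price of needing the explicit operator expansion. Both deliver the theorem as stated.
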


\begin{theorem}[Transverse instability]\label{t:2}
For a fixed $\g>0$ and $\b>0$, sufficiently small amplitude periodic traveling waves \eqref{e:expptw} of the RMKP equation \eqref{e:RMKP} are transversely unstable with  respect to non-periodic (localized or bounded) perturbations in the direction of propagation of the wave and, periodic with finite wavelength perturbations in the transverse direction if 
\[
k>\left|\dfrac{4\g}{\b }\right|^{1/4}.
\]
\end{theorem}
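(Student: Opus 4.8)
\emph{Proof plan.} The strategy is to realize the transverse spectral problem at a finite transverse wavenumber $\ell$ as a small \emph{bounded} perturbation of the one‑dimensional spectral problem for the Ostrovsky wave, and then to argue that the one‑dimensional high‑frequency instability of the latter, established in \cite{Bhavna2022High-FrequencyEquation}, persists under that perturbation.

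First I would linearize \eqref{e:RMKP} about $u=\eta(\varepsilon;k)\bigl(k(x-ct)\bigr)$, $c=c(\varepsilon;k)$, pass to the moving frame $\xi=x-ct$, and — the resulting linear equation having $y$‑ and $t$‑independent coefficients — separate variables as $v=e^{\La t}e^{i\ell y}w(\xi)$. This yields the generalized eigenvalue problem $\La\,\partial_\xi w=\A_\ell w$ with
\[
\A_\ell \;=\; \b\,\partial_\xi^4 \,+\, c\,\partial_\xi^2 \,-\, 2\,\partial_\xi^2(\eta\,\cdot\,) \,+\, \g \,+\, \ell^2 .
\]
Since $\eta$ is periodic, bounded or localized perturbations in the propagation direction are treated by Floquet--Bloch theory: writing $w=e^{i\mu\xi}W(\xi)$ with $W$ of the wave's period and $\mu$ in a fundamental cell, the $L^2(\R)$ spectrum is the union over $\mu$ of the spectra of the periodic fiber problems $\bigl(\A_{\ell,\mu},\,\partial_\xi+i\mu\bigr)$. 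The decisive point is the identity $\A_\ell=\A_0+\ell^2$, in which $\A_0$ is exactly the operator governing the spectral stability of the one‑dimensional reduction ($\ell=0$) studied in \cite{Bhavna2022High-FrequencyEquation}; thus $\ell$ enters only through the additive bounded term $\ell^2\,\mathrm{Id}$.

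Next I would feed in the one‑dimensional result. By \cite{Bhavna2022High-FrequencyEquation}, for $\b>0$, $k>|4\g/\b|^{1/4}$ and every sufficiently small $\varepsilon\neq0$ there exist a Floquet exponent $\mu_\ast=\mu_\ast(\varepsilon)\neq0$ and an isolated, finite‑multiplicity eigenvalue $\La_\ast=\La_\ast(\varepsilon)$ of $\bigl(\A_{0,\mu_\ast},\,\partial_\xi+i\mu_\ast\bigr)$ with $\operatorname{Re}\La_\ast>0$ — a high‑frequency instability, necessarily located at a nonzero Floquet exponent and hence carried by perturbations that are bounded but not co‑periodic in $x$. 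Since $\mu_\ast\neq0$, the operator $\partial_\xi+i\mu_\ast$ has Fourier symbol bounded away from $0$ on the periodic space and is boundedly invertible there, so the pencil is the honest eigenvalue problem $\La w=\bigl(\partial_\xi+i\mu_\ast\bigr)^{-1}\A_{\ell,\mu_\ast}w$ for a closed operator depending on $\ell^2$ only through the bounded perturbation $\ell^2\bigl(\partial_\xi+i\mu_\ast\bigr)^{-1}$. Analytic perturbation theory for the isolated eigenvalue $\La_\ast$ then gives, for each fixed small $\varepsilon\neq0$, some $\ell_0(\varepsilon)>0$ such that for all $\ell\in\bigl(0,\ell_0(\varepsilon)\bigr)$ there is an eigenvalue $\La_\ast(\varepsilon,\ell)\to\La_\ast(\varepsilon)$ as $\ell\to0$, still with $\operatorname{Re}\La_\ast(\varepsilon,\ell)>0$. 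This point belongs to the $L^2(\R)$ spectrum of the linearization of \eqref{e:RMKP} about $\eta$, and the associated mode $e^{\La_\ast(\varepsilon,\ell)t}e^{i\ell y}e^{i\mu_\ast\xi}W(\xi)$ is bounded and non‑co‑periodic in $x$ (as $\mu_\ast\neq0$) and periodic of finite wavelength $2\pi/\ell$ in $y$ (as $\ell\neq0$); this is exactly the instability asserted in Theorem~\ref{t:2}.

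The hard part is the uniformity in the last step. In \cite{Bhavna2022High-FrequencyEquation} both $\operatorname{Re}\La_\ast(\varepsilon)$ and the spectral gap isolating $\La_\ast(\varepsilon)$ degenerate as $\varepsilon\to0$, so one must follow $\ell_0=\ell_0(\varepsilon)$ and keep $\ell$ small relative to $\varepsilon$; concretely, what is needed is a uniform bound on the resolvent of $\bigl(\partial_\xi+i\mu_\ast\bigr)^{-1}\A_{0,\mu_\ast}$ on a small circle enclosing $\La_\ast(\varepsilon)$, which can be extracted from the $\ell=0$ analysis. One should also verify that the unavoidable degeneracies of the pencil $(\A_\ell,\partial_\xi)$ — the constants in $\ker\partial_\xi$ at $\mu=0$ and the translational eigenvalue at $\La=0$ — do not interfere, which they do not here, since we work on the fixed fiber $\mu_\ast\neq0$ with $\La_\ast$ bounded away from $0$. (A self‑contained alternative would be to redo the Floquet--Bloch/Lyapunov--Schmidt expansion in $\varepsilon$ directly for \eqref{e:RMKP}: locate a collision of two simple purely imaginary Bloch eigenvalues at a nonzero Floquet exponent for a suitable finite $\ell$, reduce to a $2\times2$ bifurcation matrix, and check that the colliding modes carry opposite Krein signature precisely when $k>|4\g/\b|^{1/4}$; that sign computation would be the technical core of that route.)
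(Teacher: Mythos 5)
There is a genuine gap in your main route: it establishes instability only for transverse wavenumbers that shrink to zero with the amplitude, whereas Theorem~\ref{t:2} concerns \emph{finite} (or short) wavelength transverse perturbations, which the paper takes to mean $|\rho|\geq\rho_0$ for some $\rho_0>0$ fixed independently of $\varepsilon$ --- this is precisely what separates Theorem~\ref{t:2} from the long-wavelength Theorem~\ref{t:1}, and the two have different thresholds in $k$. In your perturbation argument the transverse wavenumber enters the fiber problem through the bounded operator $\rho^2(\partial_z+i\xi)^{-1}$, which acts on the two colliding Fourier modes $e^{-iz}$ and $1$ as multiplication by $\rho^2/(\xi-1)$ and $\rho^2/\xi$ respectively; the resulting detuning, of size $\rho^2/(\xi(1-\xi))$, must stay below the $O(\varepsilon)$ width of the one-dimensional instability bubble of \cite{Bhavna2022High-FrequencyEquation} for the unstable eigenvalue to survive. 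You are therefore forced into $\rho=O(\sqrt{\varepsilon})$, and no refinement of the resolvent estimates changes this: the constraint is intrinsic, not a uniformity technicality. What your construction proves is a long-wavelength transverse instability for non-periodic $x$-perturbations, not the statement of the theorem.

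The paper instead carries out the collision analysis directly at finite $\rho$. For each Floquet exponent $\xi\in(0,1/2]$ the unperturbed eigenvalues $i\Omega_{-1,\rho,\xi}$ and $i\Omega_{0,\rho,\xi}$ collide along the curve $\rho=\rho_c(\xi)$ of \eqref{e:rc}, an order-one quantity which is real for some $\xi$ precisely when $k>(4\g/\b)^{1/4}$ (the minimum of $\g(1-\xi+\xi^2)/(3\b\xi^2(1-\xi)^2)$ over $\xi\in(0,1/2]$ being attained at $\xi=1/2$). The $2\times2$ reduction on the span of $\{e^{-iz},1\}$ then has off-diagonal entries $-i\varepsilon k^2\xi$ and $-i\varepsilon k^2(\xi-1)$, whose product is negative because $\xi(\xi-1)<0$ (opposite Krein signatures), so the discriminant $\varsigma^2/(\xi^2(\xi-1)^2)+4k^4\xi(\xi-1)\varepsilon^2+O(\varepsilon^2(|\varsigma|+\varepsilon^2))$ is negative for $|\rho^2-\rho_c^2|<2k^2\xi^{3/2}(1-\xi)^{3/2}|\varepsilon|+O(\varepsilon^2)$, producing a pair of eigenvalues with opposite nonzero real parts at genuinely finite $\rho$. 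This is exactly the computation you defer to a parenthetical ``self-contained alternative'' and do not carry out; it, rather than the perturbation-off-the-one-dimensional-result argument, is the proof of the theorem.
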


\begin{theorem}[Transverse stability]\label{t:3}
Assume that small-amplitude periodic traveling waves\eqref{e:expptw} of the Ostrovsky equation \eqref{e:ost} are spectrally stable in $L^2(\mathbb T)$ with respect to one-dimensional perturbations. Then, for $\beta\leq0$, $\g>0$, and $k>0$, periodic traveling waves \eqref{e:expptw} of the RMKP equation\eqref{e:RMKP} are transversely stable with respect to either periodic or non-periodic (localized or bounded) perturbations in the direction of wave propagation and, periodic in the transverse direction.
\end{theorem}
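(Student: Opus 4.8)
The plan is to reduce transverse spectral stability to a Bloch--Floquet family of spectral problems and then to exploit the smallness of the wave, tracking eigenvalues perturbatively from the explicitly solvable zero-amplitude limit; the sign condition $\b\le0$ enters by preventing the transverse wavenumber from triggering any of the eigenvalue collisions responsible for instability. First I would write $u=\eta(\varepsilon;k)+v$ and linearize \eqref{e:RMKP} about $\eta$ in the co-moving frame $z=k(x-c(\varepsilon;k)t)$. Since $\eta$ is independent of $y$, separate variables as $v(z,y,t)=e^{i\ell y+\La t}w(z)$ with transverse wavenumber $\ell\in\R$, and, to cover perturbations that are merely bounded or localized in $x$, apply the Bloch transform $w(z)=e^{i\mu z}W(z)$ with $W$ $2\pi$-periodic and $\mu\in(-\tfrac12,\tfrac12]$. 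The question becomes whether $\sigma(\A_{\ell,\mu})\subset i\R$ for every $\ell\in\R$ and every $\mu$, where the operators $\A_{\ell,\mu}$ act on mean-zero $2\pi$-periodic functions, depend analytically on $(\varepsilon,\ell,\mu)$, and are Hamiltonian: $\A_{\ell,\mu}=\J_\mu\mathcal H_{\ell,\mu}$ with $\J_\mu=-(\Dz+i\mu)$ skew-adjoint and $\mathcal H_{\ell,\mu}$ self-adjoint, and $\mathcal H_{\ell,\mu}=\mathcal H_{0,\mu}+\tfrac{\ell^2}{k^2}\mathcal K_\mu$ where $\mathcal K_\mu=-(\Dz+i\mu)^{-2}\ge0$. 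Here $\A_{0,\mu}$ is exactly the linearized Ostrovsky Bloch operator, so $\A_{0,0}$ is the operator whose spectral stability is the hypothesis.

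Next I would expand in $\varepsilon$. By \eqref{e:expptw}, $\A_{\ell,\mu}=\A^{(0)}_{\ell,\mu}+O(\varepsilon)$, where $\A^{(0)}_{\ell,\mu}$ is the linearization about $u\equiv0$ — a Fourier multiplier with purely imaginary eigenvalues $i\omega_n(\ell,\mu)$ for $n\in\Z$ with $n+\mu\ne0$, obtained from the RMKP dispersion relation carried into the frame moving at $c_0=\g/k^2+\b k^2$:
\[
\omega_n(\ell,\mu)=\b k^3\,\xi(\xi^2-1)-\frac{\g}{k}\,\xi+\frac{\g+\ell^2}{k\,\xi},\qquad \xi:=n+\mu.
\]
By analytic perturbation theory for Hamiltonian spectra, for $|\varepsilon|$ small an eigenvalue of $\A_{\ell,\mu}$ can leave $i\R$ only near a purely imaginary eigenvalue of $\A^{(0)}_{\ell,\mu}$ of higher multiplicity carrying opposite Krein signatures, i.e.\ near a collision $\omega_m(\ell,\mu)=\omega_n(\ell,\mu)$, $m\ne n$, at which $\mathrm{sgn}\langle\mathcal H^{(0)}_{\ell,\mu}e^{im\cdot},e^{im\cdot}\rangle$ and $\mathrm{sgn}\langle\mathcal H^{(0)}_{\ell,\mu}e^{in\cdot},e^{in\cdot}\rangle$ disagree; simple imaginary eigenvalues stay on $i\R$.

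It then remains to rule out such collisions for $\b\le0$, the only exception being $\La=0$ at $\mu=0$. When $\mu\ne0$ a potentially dangerous collision is between modes with $|\xi|\ne|\xi'|$; by the displayed formula the transverse term enters only through the replacement $\g\mapsto\g+\ell^2$, so the one-dimensional analysis of \cite{Bhavna2022High-FrequencyEquation}, with $\g$ replaced by $\g+\ell^2$, shows that opposite Krein signature at such a collision forces $\b k^4$ to exceed a fixed positive multiple of $\g+\ell^2$ — impossible when $\b\le0$, for every $\ell\in\R$; equal-signature collisions never destabilize. The same disposes of $\mu=0$ with $\La\ne0$. The eigenvalue $\La=0$ at $\mu=0$ always has algebraic multiplicity at least two (from translation invariance and the conservation laws); one carries out a finite-dimensional reduction to a $2\times2$ (resp.\ $3\times3$) matrix on the generalized kernel, whose spectrum must be shown to remain on $i\R$ as $(\varepsilon,\ell,\mu)$ vary. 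Its $\ell$-dependence enters through the nonnegative operator $\tfrac{\ell^2}{k^2}\mathcal K_0$ in $\mathcal H_{\ell,0}$, which for $\b\le0$ moves these small eigenvalues along $i\R$ rather than off it, while the value of the reduced determinant at $\ell=\mu=0$ is precisely what the $L^2(\T)$-stability hypothesis supplies. Combining the three cases gives $\sigma(\A_{\ell,\mu})\subset i\R$ for all $\ell\in\R$ and $\mu\in(-\tfrac12,\tfrac12]$, which is the asserted transverse spectral stability.

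The real obstacle is this last block, $\mu=0$ and $\La=0$: there $\tfrac{\ell^2}{k^2}\mathcal K_0$ enters the reduced matrix at leading order, not as a higher-order perturbation, so one cannot simply perturb off the Ostrovsky problem. One must compute the joint $(\ell,\mu)$-dependence of the reduced determinant explicitly and check that, when $\b\le0$, its discriminant retains the stable sign for all $\ell\in\R$ and all small $\mu$, given only its $\ell=\mu=0$ value. Everything else — analyticity of $\A_{\ell,\mu}$ in its parameters, the multiplier form of $\A^{(0)}_{\ell,\mu}$, and the collision bookkeeping — is routine.
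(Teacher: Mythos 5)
Your overall framework coincides with the paper's: Bloch--Floquet reduction to a one-parameter family of operators on the torus, the factorization into a skew-adjoint times a self-adjoint operator, explicit computation of the $\varepsilon=0$ spectrum, and a collision/Krein-signature analysis followed by finite-dimensional reductions. The $\rho\to0$, $\lambda\to0$ block that you single out as ``the real obstacle'' is indeed carried out in the paper via a $2\times2$ reduction on $\{\cos z,\sin z\}$, yielding $\lambda^2=-\rho^2(\rho^2+2\varepsilon^2k^2\eta_2)+O(|\varepsilon|\rho^2(\rho^2+\varepsilon^2))$, which is nonpositive for $\beta\le0$ since $\eta_2=2k^2/(3\g-12\b k^4)>0$; so that part of your plan is sound and completable.

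The genuine gap is your claim that for $\b\le0$ every opposite-Krein-signature collision is excluded because it ``forces $\b k^4$ to exceed a fixed positive multiple of $\g+\ell^2$.'' This reduction to the one-dimensional analysis fails: the transverse wavenumber is not a prescribed parameter shifting $\g$, it is the unknown solved for from the collision condition $\rho^2=-\g\mathscr F(n,\xi,\Theta)+\b k^4\mathscr V(n,\xi,\Theta)$, and one only needs the right-hand side to be \emph{positive}, not zero. For modes with $(n+\xi)(n+\xi+\Theta)<0$ (opposite Krein signatures) and $\Theta\ge3$ one has $\mathscr F<0$ and $\mathscr V<0$, so for $\b\le0$ the right-hand side equals $\g|\mathscr F|+|\b|k^4|\mathscr V|>0$ for \emph{every} $k>0$: e.g.\ $n=-2$, $\Theta=3$, $\xi=0$ gives a collision of $i\Omega_{-2,\rho}$ and $i\Omega_{1,\rho}$ at $\rho^2=\g+4|\b|k^4$ with $nm<0$. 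These collisions are precisely what Lemmas~\ref{l:c1} and \ref{l:c2} and Table~\ref{tab:col} record, and the bulk of the paper's proof of Theorem~\ref{t:3} consists of showing they are harmless: the modes $e^{inz}$ and $e^{i(n+\Theta)z}$ are coupled only at order $\varepsilon^{\Theta}$ (through the $\Theta$-th Fourier coefficient of the wave), while the diagonal detuning coming from $c_{2}\varepsilon^2$ enters at order $\varepsilon^{2}$, so the discriminant of the reduced characteristic polynomial,
\begin{equation*}
\operatorname{disc}_\varepsilon(\varsigma)=\frac{\Theta^2\varsigma^2}{(n+\xi)^2(n+\xi+\Theta)^2}+k^4\Theta^2\beta_2^2\varepsilon^4+O\bigl(\varepsilon^2(|\varsigma|+|\varepsilon|^3)\bigr),
\end{equation*}
stays positive and the colliding eigenvalues remain on $i\R$. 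Without this $\Theta\ge2$ analysis your argument does not establish stability for $\rho$ bounded away from zero, which is most of the content of the theorem.
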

\begin{theorem}[Transverse stability]\label{t:4}
Assume that small-amplitude periodic traveling waves\eqref{e:expptw} of the Ostrovsky equation \eqref{e:ost} are spectrally stable in $L^2(\mathbb T)$ with respect to one-dimensional perturbations. Then, for $\beta>0$, $\g>0$, and $k>0$, periodic traveling waves \eqref{e:expptw} of the RMKP equation\eqref{e:RMKP} are transversely stable with respect to periodic perturbations in the direction of wave propagation and, periodic with finite wavelength in the transverse direction.
\end{theorem}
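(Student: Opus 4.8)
The plan is to reduce the transverse stability question for \eqref{e:RMKP} to the already-assumed one-dimensional spectral stability of the Ostrovsky wave by a perturbation argument in the transverse wavenumber. Writing a perturbation of the $y$-independent wave $\eta(\varepsilon;k)$ as $u(x,y,t)=\eta(k(x-ct))+v(x-ct)e^{\La t}e^{i\ell y}$ and linearizing, the spectral problem for \eqref{e:RMKP} takes the form $\La \Dx v = \mathcal{L}_\varepsilon v - \ell^2 v$, where $\mathcal{L}_\varepsilon$ is (essentially) the linearization operator of the Ostrovsky equation \eqref{e:ost} about $\eta(\varepsilon;k)$ acting in the appropriate space of perturbations that are periodic or (via the Floquet–Bloch transform) of the form $e^{i\xi x}$ times a periodic function in the $x$-direction. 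Since $\beta>0$ here and we restrict to \emph{periodic} perturbations in $x$ with \emph{finite} transverse wavelength (i.e., $\ell$ bounded away from $0$), the strategy is: for $\varepsilon=0$ compute the spectrum explicitly from the constant-coefficient symbol; for $\ell$ in the finite-wavelength regime show the $\varepsilon=0$ spectrum is purely imaginary and, crucially, that the relevant eigenvalues are \emph{semisimple} and collide only in ways that do not open an instability bubble; then invoke analytic perturbation theory in $\varepsilon$ together with the assumed $L^2(\mathbb{T})$ stability at $\ell=0$ to conclude the spectrum stays on the imaginary axis for small $\varepsilon$.

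The key steps, in order, are as follows. First I would set up the linearized operator and pass to the traveling frame, recording that because \eqref{e:RMKP} is written in the ``$\Dx$ of something'' form, the operator has the structure $\Dx^{-1}$ acting on a self-adjoint piece minus the $\ell^2$ term; care is needed about the range of $\Dx$ and the zero-mass constraint, exactly as in the one-dimensional Ostrovsky analysis. Second, at $\varepsilon=0$ the operator has constant coefficients, so on each Fourier mode $e^{inz}$ (co-periodic case) the eigenvalue is $\La = \pm i\,\omega_n(\ell)$ with $\omega_n(\ell)$ an explicit function involving $\b k^4 n^3$, $\g/n$, and $\ell^2/n$; I would show $\omega_n(\ell)^2\ge 0$ for all $n$ when $\ell$ is in the finite-wavelength band, so the unperturbed spectrum is purely imaginary, and identify all resonances $\omega_m(\ell)=\pm\omega_n(\ell)$. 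Third, I would analyze these collisions: a collision can generate instability only if the associated Krein signatures are opposite, so I would compute the signature of each colliding pair and show that either no bad collision occurs in the finite-wavelength regime, or the Hamiltonian/symplectic structure forces the colliding eigenvalues to remain on the axis. Fourth, using that $\ell^2$ enters as a bounded relatively-compact perturbation and that the map $(\varepsilon,\ell)\mapsto$ spectrum is analytic, I would combine the $\ell=0$ input (the assumed one-dimensional stability, which controls precisely the branch through the origin and its neighbors) with the $\varepsilon=0$ computation to propagate purely-imaginary spectrum to a full neighborhood in $(\varepsilon,\ell)$, hence transverse stability for all small-amplitude waves.

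The main obstacle I expect is controlling the collisions of purely imaginary eigenvalues under the joint perturbation in $\varepsilon$ and $\ell$: this is exactly where transverse \emph{instability} appears in Theorems~\ref{t:1} and~\ref{t:2} for $\b>0$, so the proof of Theorem~\ref{t:4} must exploit the restriction to periodic-in-$x$, finite-wavelength-in-$y$ perturbations to rule out the Krein-negative collisions that drive those instabilities. Concretely, the delicate point is to verify that in the finite-wavelength band the dangerous resonances either do not occur or carry definite Krein signature, and that the zero eigenvalue of $\mathcal{L}_0 - \ell^2$ (coming from translation invariance) does not interact badly — this requires tracking the Jordan structure at the origin and a Puiseux-type expansion of the perturbed eigenvalues, for which the small-amplitude expansion \eqref{e:expptw} of $\eta$ and $c$ supplies the needed coefficients. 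The remaining steps (setting up the Bloch decomposition, verifying self-adjointness of the inner operator, and the analytic-perturbation bookkeeping) are routine once this signature computation is in place.
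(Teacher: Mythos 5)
Your overall framework --- reduce to the operator $\mathcal Q_\varepsilon(\rho)$ on the mean-zero periodic subspace, diagonalize the constant-coefficient operator at $\varepsilon=0$, locate collisions of the purely imaginary eigenvalues $i\Omega_{n,\rho}$, filter them by Krein signature, and then perturb in $\varepsilon$ --- is exactly the paper's. But there is a genuine gap at the decisive step. You propose to conclude by verifying that ``in the finite-wavelength band the dangerous resonances either do not occur or carry definite Krein signature.'' That verification fails: for $|\rho|\geq\rho_0>0$ and co-periodic perturbations in $x$, the pairs $i\Omega_{n,\rho}$, $i\Omega_{n+\Theta,\rho}$ with $n\in(-\Theta,0)$ and $\Theta\geq 3$ do collide when $\beta>0$ and do carry opposite Krein signatures, since $n(n+\Theta)<0$; these are precisely the potentially unstable nodes recorded in Table~\ref{tab:col}. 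Opposite signature is only a \emph{necessary} condition for the collision to eject eigenvalues off the imaginary axis, so your plan stops one step short of a proof.

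The step that closes the gap, and which your proposal does not supply, is quantitative. Because the wave has Fourier expansion $\varepsilon\cos z+\varepsilon^2\eta_2\cos 2z+\cdots$, the coupling between the modes $e^{inz}$ and $e^{i(n+\Theta)z}$ in the reduced $2\times 2$ matrix $\mathscr G_\varepsilon$ first appears at order $\varepsilon^{\Theta}$ (through the $\cos(\Theta z)$ harmonic of $\eta$), while the two diagonal entries acquire an order-$\varepsilon^2$ relative detuning proportional to $k^2\Theta\beta_2$. Consequently the discriminant of the characteristic polynomial is
$\Theta^2\varsigma^2/\bigl((n+\xi)^2(n+\xi+\Theta)^2\bigr)+k^4\Theta^2\beta_2^2\varepsilon^4+O\bigl(\varepsilon^2(|\varsigma|+|\varepsilon|^3)\bigr)>0$
for $|\varepsilon|$ and $|\varsigma|=|\rho^2-\rho_c^2|$ small, so both roots $\theta$ are real and $\lambda=i\Omega+i\theta$ stays on the imaginary axis despite the opposite signatures. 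Without this computation (or a substitute for it) your argument cannot distinguish Theorem~\ref{t:4} from the genuinely unstable finite-wavelength case of Theorem~\ref{t:2}, where the colliding modes differ by $\Theta=1$, the coupling enters at order $\varepsilon$, and the discriminant does change sign. A secondary remark: your emphasis on the Jordan structure at the origin and translation invariance is the right concern for the long-wavelength regime of Theorem~\ref{t:1}, but in the finite-wavelength regime of Theorem~\ref{t:4} the only collision at the origin is the pair $\{-1,1\}$, which occurs at $\rho=0$ and is excluded by the assumption $|\rho|\geq\rho_0>0$.
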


\subsection*{Generalized RMKP Model}
By generalizing the dispersion and nonlinearity in the RMKP equation \eqref{e:RMKP}, we put forward the generalized Rotation-Modified Kadomtsev-Petviashvili (gRMKP) equation
\begin{equation}\label{e:gRMKP}
\left( u_t+\mathcal J u_x+\alpha_1(u^2)_x+\alpha_2(u^3)_x\right)_x-\gamma u+ u_{yy}=0, ~ \gamma >0, 
\end{equation}
Here, the multiplier operator $\J$ is represented by the symbol $\jmath(k)$ as 
\begin{equation}\label{e:M}
    \widehat{\mathcal J f}(k)=\jmath (k)\widehat{f}(k),
\end{equation} 
$\alpha_1\in\{0,1\}$, and $\alpha_2\in\{-1,0\}$. This equation can also be viewed as the extension of the generalized Ostrovsky equation
\begin{equation}\label{e:gost}
    \left( u_t+\mathcal J u_x+\alpha_1(u^2)_x+\alpha_2(u^3)_x\right)_x-\gamma u=0
\end{equation}
Following are the assumptions on $\jmath(k)$.
\begin{hypothesis}\label{h:m} The multiplier symbol $\jmath(k)$ in \eqref{e:M} must meet the requirements below.
\begin{enumerate}[label=J\arabic*. , wide=0.5em,  leftmargin=*]
    \item $\jmath$ is real valued, even and without loss of generality, $\jmath(0)=1$,
    \item $A_1 k^\mathfrak{b} \leq \jmath(k) \leq A_2 k^\mathfrak{b}$ , 
$ k >> 1$ , $\mathfrak{b} \geq -1$ and for some $A_1 , A_2 > 0$,
\item $\jmath$ is strictly monotonic for $k>0$.
\end{enumerate}
\end{hypothesis}
Hypotheses J1 and J2 are necessary for the demonstration of the existence of periodic traveling waves, which we describe in the section below, while J3 is needed for the collision analysis which is done in Sections~\ref{ss:g1} and \ref{ss:g2}.

For different choices of $\jmath(k)$, $\alpha_1$, and $\alpha_2$, we obtain several models of interest.
In the similar fashion, one can define the Rotation-Modified Benjamin-Ono (RMBO) equation
 \begin{equation}\label{e:RMBO}
(u_t+\b  \mathcal{H}u_{xx} +(u^2)_x )_x-\g u=0,
\end{equation} 
by incorporating rotation effects in the Benjamin-Ono (BO) equation
 \begin{equation}\label{E:BO}
u_t+\b  \mathcal{H}u_{xx} +(u^2)_x =0.
\end{equation} 
Here, $\mathcal{H}$ is the Hilbert transform. The RMBO equation represents the long internal waves propagation
in a deep rotating fluid \cite{Grimshaw1985EvolutionFluid}. The equation \eqref{e:RMBO} can be extended to two dimensions as
 \begin{equation}\label{e:RMBOKP}
(u_t+\b  \mathcal{H}u_{xx} +(u^2)_x )_x-\g u+u_{yy}=0,
\end{equation} 
which we call Rotation-Modified Benjamin-Ono Kadomtsev-Petviashvili (RMBO-KP) equation.
We consider the following generalization of \eqref{e:RMKP} and \eqref{e:RMBOKP}
\begin{equation}\label{e:RMFKP}
(u_t+\b \Lambda^\alpha u_x +(u^2)_x )_x+u_{yy}-\g u=0,
\end{equation}
where the psuedo-differential operator $\Lambda^\alpha=(\sqrt{-\Dx^2})^\alpha$ is defined by its Fourier symbol as $\widehat{\Lambda^\alpha f}(\k)=|\k|^\alpha \widehat{f} (\k)$. Here, $1\leq\alpha\in \R$. Notice that, $\alpha=2$ and $\alpha=1$ in \eqref{e:RMFKP} gives \eqref{e:RMKP} and \eqref{e:RMBOKP} respectively. We say \eqref{e:RMFKP} as RM-fKdV-KP. In the similar fashion, one can also define the RM-Whitham-KP equation as 
\begin{equation}\label{e:RMKPW1}
(u_t+\b \mathcal{J} u_x +(u^2)_x )_x+u_{yy}-\g u=0,
\end{equation}
where 
\begin{equation}\label{e:jw1}
    \widehat{\mathcal J f}(k)=\sqrt{\dfrac{\tanh k}{k}}=:\jmath (k)\widehat{f}(k),
\end{equation}
and RMILW-KP generalizing the Intermediate Long wave (ILW) equation as 
\begin{equation}\label{e:RMKPW}
(u_t+\b \mathcal{J} u_x +(u^2)_x )_x+u_{yy}-\g u=0,
\end{equation}
where 
\begin{equation}\label{e:jw}
    \widehat{\mathcal J f}(k)=k\coth k=:\jmath (k)\widehat{f}(k),
\end{equation}
Also, RMG-KP can be defined by generalizing the Gardner equation as follows
\begin{equation}\label{e:GKP}
    (u_t-\b u_{xxx}+(u^2)_x-(u^3)_x)_x+u_{yy}-\g u=0
\end{equation}
RM-mKdV-KP equation can be defined by generalizing the mKdV equation
\begin{equation}\label{e:mKdVKP}
    (u_t-\b u_{xxx}-(u^3)_x)_x+u_{yy}-\g u=0
\end{equation}
For $\b=0$, We will call \eqref{e:RMKP} as Reduced-RMKP equation.
All these models discussed above fits into the model \eqref{e:gRMKP}.

Seeking for one-dimensional traveling wave solution of \eqref{e:gRMKP} of the form 
$u(x,y,t)=\mathscr{U}(x-c_\mathfrak{g}t)$, where $c\in\R$ is the speed of wave propagation, $\mathscr{U}$ satisfies the following
\begin{equation}\label{e:w}
-c_\mathfrak{g} \mathscr{U}^{\prime\prime}+\mathcal{J}\mathscr{U}^{\prime\prime}+\alpha_1(\mathscr{U}^2)^{\prime\prime}+\alpha_2(\mathscr{U}^3)^{\prime\prime}-\g \mathscr{U}=0
\end{equation}
We look for a periodic solution of \eqref{e:w}, that is, $\mathscr{U}$ is a $2\pi/k$-periodic function of its argument where $k>0$ is the wave number. Taking $z:=kx$, the function $\eta_\mathfrak{g}(z):=\mathscr{U}(x)$ is periodic with period $2\pi$ in $z$ and satisfies
\begin{align}\label{e:w1}
    k^2(-c_\mathfrak{g} \eta_\mathfrak{g}^{\prime\prime}+\mathcal{J}_k\eta_\mathfrak{g}^{\prime\prime}+\alpha_1(\eta_\mathfrak{g}^2)^{\prime\prime}+\alpha_2(\eta_\mathfrak{g}^3)^{\prime\prime})-\g \eta_\mathfrak{g}=0
\end{align}
Note that \eqref{e:w1} is invariant under $z\mapsto z+z_0$ and $z\mapsto -z$ and hence, we may suppose that $\eta_\mathfrak{g}$ is even. Also, observe that \eqref{e:w1} does not possess scaling invariance. We seek a non-trivial $2\pi$-periodic solution $\eta_\mathfrak{g}$ of \eqref{e:w1}. For fixed  $\g>0$, let $F:H^4(\mathbb{T})\times \R \times \R^+ \to L^2(\mathbb{T})$ be defined as
\begin{align}\label{E:F}
    F(\eta_\mathfrak{g},c_\mathfrak{g};k)=k^2(-c_\mathfrak{g} \eta_\mathfrak{g}^{\prime\prime}+\mathcal{J}_k\eta_\mathfrak{g}^{\prime\prime}+\alpha_1(\eta_\mathfrak{g}^2)^{\prime\prime}+\alpha_2(\eta_\mathfrak{g}^3)^{\prime\prime})-\g \eta_\mathfrak{g}.
\end{align}
It is well defined by a Sobolev inequality. We seek a solution $\eta_\mathfrak{g}\in H^4(\mathbb{T})$, $c \in \R$ and $k>0$ of 
\begin{equation*}\label{E:F1}
    F(\eta_\mathfrak{g},c_\mathfrak{g};k)=0.
\end{equation*}

Clearly, $F(0,c;k)=0$ for all $c_\mathfrak{g} \in \R$ and $k>0$. If non-trivial solutions of $F(\eta_\mathfrak{g},c_\mathfrak{g};k)=0$ bifurcate from $\eta_\mathfrak{g} \equiv 0$ for some $c_\mathfrak{g}$ then
\begin{equation*}\label{E:df}
    I_0 := \partial_{\eta_\mathfrak{g}}F(0,c_\mathfrak{g};k) = -c_\mathfrak{g}k^2\partial_z^2 +  k^2\mathcal{J}_k \partial_z^2 - \g
\end{equation*} from $H^4(\mathbb{T})$ to $L^2(\mathbb{T})$, is not an isomorphism. From a straightforward calculation, 
\begin{equation*}\label{E:df1}
    I_0 e^{inz} = (c_\mathfrak{g} k^2n^2 -  k^2 n^2\jmath(k) - \g) e^{inz} = 0, \quad n \in \Z
\end{equation*}
if and only if 
\[
    c_\mathfrak{g}=\jmath(k)n^2+\frac{\g}{k^2 n^2}, \quad n\in \Z.
\]
Without loss of generality, we take $n=1$ viz.
\begin{equation}\label{E:c0}
    c_{\mathfrak{g}}=\jmath(k)+\frac{\g}{k^2}.
\end{equation}
Note that wavenumbers satisfying $k^2 \left( \jmath(kn)-\jmath(k) \right) =\dfrac{\g (n^2-1)}{ n^2}$, $2\le n\in \mathbb{N}$, satisfy resonance condition
\begin{align}\label{Eq:reso}
    \frac{\g}{k^2}+\jmath(k) =\frac{\g}{k^2 n^2}+\jmath(kn)
\end{align}
of the fundamental mode and $n$th harmonic, making the kernel of $I_0$ four-dimensional. With both the kernel and the co-kernel being spanned by $e^{\pm iz}$, $I_0$ is a Fredholm operator of index zero for all other values of $k$.

 The proof of existence of a one-parameter family of non-trivial solutions of $F(\eta_\mathfrak{g},c_\mathfrak{g};k)=0$ bifurcating from $\eta_\mathfrak{g} \equiv 0$ and $c_\mathfrak{g} = c_{\mathfrak{g}_0}$ adheres to the same reasoning as the arguments in \cite[Appendix~A]{Bhavna2022High-FrequencyEquation}. We summarize the existence result for periodic traveling waves of \eqref{e:gRMKP} and their small-amplitude expansion below.
\begin{theorem}\label{T:sol}
Depending on the sign of $\jmath$, take into account the following wavenumbers,
\begin{enumerate}
    \item for $\jmath$ decreasing, all $k>0$, and
    \item for $\jmath$ increasing, all $k>0$ but those satisfying $k^2 \left( \jmath(kn)-\jmath(k) \right) =\dfrac{\g (n^2-1)}{n^2}$ ; $2\le n\in \mathbb{N}$.
\end{enumerate}
Consequently, a one parameter family of solutions of \eqref{e:w1} exists for all such wavenumbers $k$, given by $u(x,t)=\eta_\mathfrak{g}(\varepsilon;k)(k(x-c_\mathfrak{g}(\varepsilon;k)t))$ for $\varepsilon \in \R$ and $|\varepsilon|$ sufficiently small; $\eta_\mathfrak{g}(\varepsilon;k)(\cdot)$ is $2\pi$-periodic, even and smooth in its argument, and $c_\mathfrak{g}(\varepsilon;k)$ is even in $\varepsilon$; $\eta_\mathfrak{g}(\varepsilon;k)$ and $c_\mathfrak{g}(\varepsilon;k)$ depend analytically on $\varepsilon$ and $k$. Moreover, 
\begin{equation}\label{E:w_ansatz}
    \eta_\mathfrak{g}(\varepsilon;k)(z)=\varepsilon\cos(z) + \varepsilon^2\eta_{\mathfrak{g}_2}\cos 2z + \varepsilon^3\eta_{\mathfrak{g}_3}\cos 3z + O(\varepsilon^4),
\end{equation}
and
\begin{align*}
    c_\mathfrak{g}(\varepsilon;k)=c_{\mathfrak{g}_0}+\varepsilon^2c_{\mathfrak{g}_2}+O(\varepsilon^4)
\end{align*}
as $\varepsilon \to 0$, where $c_{\mathfrak{g}0}$ is in \eqref{E:c0},
\[
\eta_{\mathfrak{g}_2} = \dfrac{2\alpha_1k^2}{3\g+4 k^2(\jmath(k)-\jmath(2k))}, \quad  \eta_{\mathfrak{g}_3} = \dfrac{9\alpha_1k^2\eta_{\mathfrak{g}_2}+9/4\alpha_2 k^2}{8\g+9 k^2(\jmath(k)-\jmath(3k)},
\]
\[
c_{\mathfrak{g}_2}=\alpha_1\eta_{\mathfrak{g}_2}+(3/4)\alpha_2.
\]
\end{theorem}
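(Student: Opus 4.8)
The plan is to run a Lyapunov--Schmidt reduction for the map $F$ of \eqref{E:F} restricted to even functions, exactly along the lines of \cite[Appendix~A]{Bhavna2022High-FrequencyEquation}, and then to extract \eqref{E:w_ansatz} by a Stokes-type expansion. First I would note that Hypotheses J1--J2 make $F\colon H^4_{\mathrm{even}}(\mathbb{T})\times\R\times\R^+\to L^2_{\mathrm{even}}(\mathbb{T})$ a well-defined analytic map with $F(0,c_\mathfrak{g};k)\equiv0$, and that, by the computation in the excerpt, the linearization $I_0:=\partial_{\eta_\mathfrak{g}}F(0,c_{\mathfrak{g}_0};k)$ (the operator of the excerpt evaluated at $c_\mathfrak{g}=c_{\mathfrak{g}_0}$) is symmetric, acts by $I_0\cos nz=\bigl(k^2n^2(c_{\mathfrak{g}_0}-\jmath(kn))-\gamma\bigr)\cos nz$, has even kernel exactly $\mathrm{span}\{\cos z\}$, and is Fredholm of index zero, provided $k$ is none of the resonant wavenumbers excluded in the statement (those for which \eqref{Eq:reso} holds for some integer $n\ge2$). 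Then I would split $L^2_{\mathrm{even}}(\mathbb{T})=\mathrm{span}\{\cos z\}\oplus\mathrm{ran}\,I_0$ with orthogonal projections $P$ and $Q=\mathrm{Id}-P$, set $\eta_\mathfrak{g}=\varepsilon\cos z+w$ with $\langle w,\cos z\rangle=0$, and decompose $F=0$ into the range equation $QF(\varepsilon\cos z+w,c_\mathfrak{g};k)=0$ and the bifurcation equation $PF(\varepsilon\cos z+w,c_\mathfrak{g};k)=0$.

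Since the $w$-derivative of the range equation at $(0,c_{\mathfrak{g}_0})$ is $QI_0$ restricted to $\{\cos z\}^\perp$, an isomorphism onto $\mathrm{ran}\,I_0$, the analytic implicit function theorem yields a unique analytic $w=w(\varepsilon,c_\mathfrak{g};k)$ near $(0,c_{\mathfrak{g}_0})$ with $w(0,c_\mathfrak{g};k)=0$, hence $\partial_\varepsilon w(0,c_\mathfrak{g};k)\perp\cos z$. Feeding this into the bifurcation equation gives a scalar analytic function $\Phi(\varepsilon,c_\mathfrak{g};k)$ that vanishes identically at $\varepsilon=0$, so $\Phi=\varepsilon\,\widetilde\Phi$ with $\widetilde\Phi$ analytic, and, identifying the $\cos z$-line with $\R$ via the $\cos z$-Fourier coefficient,
\[
\widetilde\Phi(0,c_\mathfrak{g};k)=k^2\bigl(c_\mathfrak{g}-\jmath(k)\bigr)-\gamma,
\]
because $\partial_{\eta_\mathfrak{g}}F(0,c_\mathfrak{g};k)\cos z=\bigl(k^2(c_\mathfrak{g}-\jmath(k))-\gamma\bigr)\cos z$ and the $\partial_\varepsilon w$-contribution vanishes by symmetry of $\partial_{\eta_\mathfrak{g}}F(0,c_\mathfrak{g};k)$ together with $\partial_\varepsilon w\perp\cos z$. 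This has a simple zero at $c_\mathfrak{g}=c_{\mathfrak{g}_0}$ with $c_\mathfrak{g}$-derivative $k^2\ne0$, so one more application of the implicit function theorem produces the analytic branch $c_\mathfrak{g}=c_\mathfrak{g}(\varepsilon;k)$ and hence the asserted one-parameter family. Evenness of $c_\mathfrak{g}$ in $\varepsilon$ follows from translation invariance of \eqref{e:w1}: the half-period shift $z\mapsto z+\pi$ carries the even solution $\eta_\mathfrak{g}(\varepsilon;k)$ to another even solution with the same speed whose $\cos z$-Fourier coefficient equals $-\varepsilon$, so uniqueness in the reduction forces $\eta_\mathfrak{g}(\varepsilon;k)(\cdot+\pi)=\eta_\mathfrak{g}(-\varepsilon;k)(\cdot)$ and $c_\mathfrak{g}(-\varepsilon;k)=c_\mathfrak{g}(\varepsilon;k)$; this also makes the $\cos nz$-coefficient of $\eta_\mathfrak{g}$ an even (resp.\ odd) function of $\varepsilon$ for $n$ even (resp.\ odd), which is the structure recorded in \eqref{E:w_ansatz}.

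Finally I would substitute $\eta_\mathfrak{g}=\varepsilon\cos z+\varepsilon^2\eta_{\mathfrak{g}_2}\cos2z+\varepsilon^3\eta_{\mathfrak{g}_3}\cos3z+O(\varepsilon^4)$ and $c_\mathfrak{g}=c_{\mathfrak{g}_0}+\varepsilon^2c_{\mathfrak{g}_2}+O(\varepsilon^4)$ into \eqref{e:w1}, expand with $\cos^2z=\tfrac12(1+\cos2z)$ and $\cos^3z=\tfrac14(3\cos z+\cos3z)$, and collect harmonics: the $\cos2z$ balance at order $\varepsilon^2$ gives $\eta_{\mathfrak{g}_2}\bigl(4k^2(c_{\mathfrak{g}_0}-\jmath(2k))-\gamma\bigr)=2\alpha_1k^2$, i.e.\ the stated value of $\eta_{\mathfrak{g}_2}$; the $\cos z$ balance at order $\varepsilon^3$ gives $c_{\mathfrak{g}_2}=\alpha_1\eta_{\mathfrak{g}_2}+\tfrac34\alpha_2$; and the $\cos3z$ balance at order $\varepsilon^3$ gives the stated $\eta_{\mathfrak{g}_3}$, the relevant harmonics lying in $\mathrm{ran}\,I_0$ precisely because $k$ is not resonant. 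I expect the only genuinely delicate points to be the Fredholm/kernel bookkeeping that pins down the exceptional wavenumbers via \eqref{Eq:reso}, the verification that J1--J2 place $F$ between the correct Sobolev spaces so that the analytic implicit function theorem applies, and the nondegeneracy $\partial_{c_\mathfrak{g}}\widetilde\Phi(0,c_{\mathfrak{g}_0};k)=k^2\ne0$ driving the bifurcation; the remaining computations are routine.
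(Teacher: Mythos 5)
Your proposal is correct and takes essentially the same approach as the paper: the paper defines the same map $F$, computes the same linearization $I_0$ with kernel spanned by $e^{\pm iz}$ and the same resonance condition \eqref{Eq:reso}, and then defers the Lyapunov--Schmidt reduction and Stokes expansion to \cite[Appendix~A]{Bhavna2022High-FrequencyEquation}, which is precisely the argument you have written out (your restriction to even functions plays the role of the paper's use of translation/reflection invariance to cut the two-dimensional kernel down to $\mathrm{span}\{\cos z\}$). Your harmonic balances reproduce the stated $\eta_{\mathfrak{g}_2}$, $\eta_{\mathfrak{g}_3}$, and $c_{\mathfrak{g}_2}$ correctly.
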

 Starting now, we denote $\eta_{\mathfrak{g}}(k,\varepsilon)(z)$ and $c_\mathfrak{g}(k,\varepsilon)$ as $\eta_{\mathfrak{g}}$ and $c_\mathfrak{g}$ respectively. We study the transverse stability and instability of these waves with respect to two-dimensional perturbations of different nature. We obtain results generalizing Theorems~\ref{t:1}
-\ref{t:4}, which we list down below.

\begin{theorem}[Transverse instability]\label{t:5}
For a fixed $\g>0$, sufficiently small amplitude periodic traveling waves \eqref{E:w_ansatz} of the gRMKP equation \eqref{e:gRMKP} are transversely unstable with respect to periodic perturbations in both directions and long wavelength perturbations in the transverse direction of propagation when
\begin{enumerate}
    \item $\alpha_1=1$, $\alpha_2=0$
    \begin{enumerate}
        \item $\jmath$ is increasing, and \[|k^2(\jmath(2k)-\jmath(k))|>\dfrac{3\g}{4},\]
        \item $\jmath$ is decreasing, No instability
    \end{enumerate}
    \item $\alpha_1=0$, $\alpha_2=-1$; all $\jmath$, $\forall k>0$.
    \item $\alpha_1=1$, $\alpha_2=-1$, all $\jmath$, and
    \[\dfrac{2k^2}{3\g+4k^2(\jmath(k)-\jmath(2k))}<\dfrac{3}{4}\]

\end{enumerate}

\end{theorem}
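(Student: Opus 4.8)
The plan is a long-wavelength perturbation analysis of the pair of spectral branches emanating from the origin, in the spirit of the transverse-instability arguments for KP- and ZK-type equations. \textbf{Step 1: the linearised spectral problem.} First I would pass to the travelling frame $z=k(x-c_{\mathfrak g}t)$ and linearise \eqref{e:gRMKP} about the profile $\eta_{\mathfrak g}$, seeking perturbations $v=e^{\lambda t+i\ell y}W(z)$ with $W$ of period $2\pi$ (co-periodic in $x$) and $\ell\in\R$ the transverse Floquet exponent; the regime of interest is $\ell\to0$. Since the outer $\partial_x$ and the lower-order terms $-\gamma u+u_{yy}$ are not in $x$-divergence form, one integrates once; the resulting equation forces $W$ to have zero mean (as $\gamma+\ell^2>0$), and on the mean-zero subspace it reads
\[
\lambda\,k\,\partial_z W=\mathcal L_\ell W,\qquad \mathcal L_\ell=\mathcal L_0+\ell^2,\qquad \mathcal L_0=k^2\partial_z^2\big(c_{\mathfrak g}-\mathcal J_k-2\alpha_1\eta_{\mathfrak g}-3\alpha_2\eta_{\mathfrak g}^2\big)\cdot\,+\,\gamma,
\]
equivalently $\lambda W=\mathcal A_\ell W$ with $\mathcal A_\ell=\mathcal A_0+\tfrac{\ell^2}{k}\partial_z^{-1}$, where $\mathcal A_0=\tfrac1k\partial_z^{-1}\mathcal L_0$ is exactly the operator governing the one-dimensional spectral stability of $\eta_{\mathfrak g}$ in \eqref{e:gost}. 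Writing $\mathcal A_\ell=\partial_z\mathcal S_\ell$ with $\mathcal S_\ell$ self-adjoint exposes the Hamiltonian structure, so the spectrum is invariant under $\lambda\mapsto-\lambda$ and $\lambda\mapsto\bar\lambda$; transverse instability is thus equivalent to $\mathcal A_\ell$ possessing, for some small $\ell$, an eigenvalue off the imaginary axis. The $\ell$-dependence enters only through the bounded term $\tfrac{\ell^2}{k}\partial_z^{-1}$, analytic in $\ell^2$.

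\textbf{Step 2: localisation at the origin and the Jordan structure.} At $\varepsilon=0$ the coefficients are constant, $\mathcal A_\ell$ is skew-adjoint, and a Fourier computation yields purely imaginary simple eigenvalues $\lambda_n(\ell)$ for $|n|\geq2$ together with $\lambda_{\pm1}(0)=0$, a semisimple double eigenvalue with eigenfunctions $e^{\pm iz}$ (using that $k$ avoids the resonance \eqref{Eq:reso}, as required in Theorem~\ref{T:sol}, and that, by J2--J3, the remaining spectrum stays bounded away from $0$ uniformly in small $\ell$). Hence for $(\varepsilon,\ell)$ near $0$ the Riesz projection $P(\varepsilon,\ell)$ onto the spectrum of $\mathcal A_\ell$ in a fixed small disc about $0$ has rank $2$, and instability can only come from $\mathrm{Ran}\,P$. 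For $\ell=0$ and $0<|\varepsilon|$ small, $\mathcal A_0$ is no longer skew-adjoint; differentiating the profile equation \eqref{e:w1} in $z$ and in $\varepsilon$ (the latter admissible since $c_{\mathfrak g}'(\varepsilon)=2c_{\mathfrak g_2}\varepsilon+O(\varepsilon^3)\neq0$ whenever $c_{\mathfrak g_2}\neq0$) identifies $\lambda=0$ as a single $2\times2$ Jordan block: $\mathcal A_0\phi_0=0$ with $\phi_0=\eta_{\mathfrak g}'$, and $\mathcal A_0\phi_1=\phi_0$ with $\phi_1$ a suitable multiple of $\partial_\varepsilon\eta_{\mathfrak g}/c_{\mathfrak g}'(\varepsilon)$; the biorthogonal adjoint pair $\{\psi_0,\psi_1\}$ is built from $\ker\mathcal L_0^{*}$, and by parity $\psi_1$ is even, so $\langle\psi_1,\phi_0\rangle=0$ automatically.

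\textbf{Step 3: the reduced matrix and the criterion $c_{\mathfrak g_2}<0$.} In the basis $\{\phi_0,\phi_1\}$ the restriction $\mathcal A_\ell|_{\mathrm{Ran}\,P}$ equals
\[
\begin{pmatrix}0&1\\0&0\end{pmatrix}+\frac{\ell^2}{k}\Big(\langle\psi_i,\partial_z^{-1}\phi_j\rangle\Big)_{i,j=0,1}+O(\ell^4),
\]
and since $\partial_z^{-1}\phi_0=\eta_{\mathfrak g}$ (mean-zero), its eigenvalues satisfy $\lambda_\pm^{2}\approx\tfrac{\ell^2}{k}\langle\psi_1,\eta_{\mathfrak g}\rangle$. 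It then remains to evaluate $\langle\psi_1,\eta_{\mathfrak g}\rangle$ from the small-amplitude expansion \eqref{E:w_ansatz}. The delicate point is the bookkeeping of powers of $\varepsilon$: the normalisation $\langle\psi_1,\phi_1\rangle=1$ together with $\phi_1\propto\partial_\varepsilon\eta_{\mathfrak g}/c_{\mathfrak g}'(\varepsilon)\sim(\mathrm{const})/\varepsilon$ forces $\psi_1=O(\varepsilon)$, whereupon the naive $O(\varepsilon)$ term in $\langle\psi_1,\eta_{\mathfrak g}\rangle$ cancels and one is left with $\langle\psi_1,\eta_{\mathfrak g}\rangle=-2k\,c_{\mathfrak g_2}\,\varepsilon^{2}+O(\varepsilon^{3})$, hence
\[
\lambda_\pm^{2}=-2\,c_{\mathfrak g_2}\,\varepsilon^{2}\ell^{2}+O(\varepsilon^{3}\ell^{2}).
\]
Thus the two branches become real — a transverse instability — precisely when $c_{\mathfrak g_2}<0$, and stay purely imaginary (no long-wavelength instability) when $c_{\mathfrak g_2}>0$. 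I expect Step~3 — the simultaneous expansion of $\phi_1$, $\psi_1$, $P(\varepsilon,\ell)$, and the verification of the cancellation and of the coefficient $-2k\,c_{\mathfrak g_2}$ — to be the main obstacle.

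\textbf{Step 4: reading off the three cases.} Finally I would substitute $c_{\mathfrak g_2}=\alpha_1\eta_{\mathfrak g_2}+\tfrac34\alpha_2$ and $\eta_{\mathfrak g_2}=2\alpha_1k^{2}/\big(3\gamma+4k^{2}(\jmath(k)-\jmath(2k))\big)$ from Theorem~\ref{T:sol}. For $\alpha_1=1,\alpha_2=0$ one has $c_{\mathfrak g_2}=\eta_{\mathfrak g_2}$, which is negative iff $3\gamma+4k^{2}(\jmath(k)-\jmath(2k))<0$; for $\jmath$ increasing this is $|k^{2}(\jmath(2k)-\jmath(k))|>3\gamma/4$, while for $\jmath$ decreasing the denominator is positive and $c_{\mathfrak g_2}>0$ (no instability) — in the RMKP case $\jmath(k)=\beta k^{2}$ this recovers Theorem~\ref{t:1}. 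For $\alpha_1=0,\alpha_2=-1$, $c_{\mathfrak g_2}=-3/4<0$ for every $\jmath$ and every $k>0$. For $\alpha_1=1,\alpha_2=-1$, $c_{\mathfrak g_2}<0$ iff $2k^{2}/\big(3\gamma+4k^{2}(\jmath(k)-\jmath(2k))\big)<3/4$. These are precisely cases (1)--(3) of the statement. A subsidiary check, needed throughout, is that the resonant wavenumbers of \eqref{Eq:reso} are excluded and that no eigenvalue of $\mathcal A_\ell$ other than the two tracked above approaches $0$ as $(\varepsilon,\ell)\to0$, so the two-dimensional reduction is exhaustive; the degenerate sub-case $c_{\mathfrak g_2}=0$ lies outside the (strict) hypotheses of the theorem.
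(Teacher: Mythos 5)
Your proposal is correct in its conclusion and in its overall strategy, but it reaches the reduced $2\times 2$ problem by a genuinely different route than the paper. The paper works with the operator $\mathcal Q^\mathfrak{g}_{\varepsilon}(\rho)$ on $L^2_0(\mathbb T)$, takes the explicit $O(\varepsilon)$-corrected real basis $\{\cos z+2\varepsilon A_2\cos 2z,\ \sin z+2\varepsilon A_2\sin 2z\}$ for the two branches emanating from the double eigenvalue at the origin, expands the operator itself in $\varepsilon$, and computes the matrix $\mathcal T^\mathfrak{g}_\varepsilon(\rho)$ entry by entry, obtaining $\lambda_\mathfrak{g}^2=-\rho^2\bigl(\rho^2+2k^2\varepsilon^2(\alpha_1\eta_{\mathfrak{g}_2}+\tfrac34\alpha_2)\bigr)+O(\cdot)$, i.e.\ instability precisely when $c_{\mathfrak{g}_2}=\alpha_1\eta_{\mathfrak{g}_2}+\tfrac34\alpha_2<0$ and $\rho$ is small enough. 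You instead exploit the exact Jordan chain generated by translation invariance ($\phi_0=\eta_{\mathfrak g}'$) and the amplitude/speed variation ($\phi_1\propto\partial_\varepsilon\eta_{\mathfrak g}/c_{\mathfrak g}'(\varepsilon)$), which compresses the whole computation into the single inner product $\langle\psi_1,\eta_{\mathfrak g}\rangle$; this is structurally cleaner and makes the appearance of $c_{\mathfrak{g}_2}$ conceptually transparent (it enters through the normalization $c_{\mathfrak g}'(\varepsilon)=2c_{\mathfrak{g}_2}\varepsilon+O(\varepsilon^3)$ in $\phi_1$), whereas the paper's direct expansion requires no discussion of generalized eigenvectors or of the nondegeneracy $c_{\mathfrak{g}_2}\neq0$. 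Your Step 4 then reproduces cases (1)--(3) exactly, and your growth rate $\lambda_\pm^2=-2c_{\mathfrak{g}_2}\varepsilon^2\ell^2$ agrees with the paper's after accounting for the factor $k$ in the ansatz \eqref{e:per1}.

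The one substantive caveat is that the decisive identity $\langle\psi_1,\eta_{\mathfrak g}\rangle=-2k\,c_{\mathfrak{g}_2}\varepsilon^2+O(\varepsilon^3)$ -- including the claimed cancellation of the naive $O(\varepsilon)$ contribution -- is asserted rather than derived, and you flag it yourself as the main obstacle. That computation is exactly the content the paper supplies by brute force: the $\varepsilon$-expansion of $\mathcal Q^\mathfrak{g}_\varepsilon(\rho)$ (the term $k^2\varepsilon^2(\alpha_1\eta_{\mathfrak{g}_2}-\tfrac34\alpha_2)\partial_z$ from the expansion of $c_{\mathfrak g}$ together with the contributions of $-2\alpha_1\eta_{\mathfrak g}-3\alpha_2\eta_{\mathfrak g}^2$ projected back onto the $e^{\pm iz}$ modes) is what produces the coefficient $2k^2\varepsilon^2 c_{\mathfrak{g}_2}$ in the off-diagonal entry. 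Until that inner product is evaluated (or equivalently the matrix entries are computed as in the paper), the sign criterion $c_{\mathfrak{g}_2}<0$ is a well-motivated conjecture rather than a proof; with that computation supplied, your argument is complete and equivalent to the paper's.
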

\begin{theorem}[Transverse instability]\label{t:6}
For fixed $\g>0$ and $\jmath$ increasing, sufficiently small amplitude periodic traveling waves \eqref{E:w_ansatz} of the gRMKP equation \eqref{e:gRMKP} are transversely unstable with  respect to non-periodic (localized or bounded) perturbations in the direction of propagation of the wave and, periodic with finite wavelength perturbations in the transverse direction if 
\[k^2(\jmath(k)-\jmath(k/2))>3\g
\]
\end{theorem}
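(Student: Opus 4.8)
The plan is to carry over the high-frequency (collision) instability mechanism that makes small-amplitude periodic waves of the one-dimensional generalized Ostrovsky equation~\eqref{e:gost} spectrally unstable --- as in the treatment of~\eqref{e:ost} in \cite{Bhavna2022High-FrequencyEquation} and the collision analysis of Sections~\ref{ss:g1}--\ref{ss:g2} --- to the transverse problem, keeping the transverse wavenumber $\ell$ as a free parameter, and to note that the hypothesis $k^2(\jmath(k)-\jmath(k/2))>3\g$ is exactly what makes the relevant resonance available at some finite $\ell\neq 0$. \textbf{Step 1 (spectral set-up).} First I would linearize~\eqref{e:gRMKP} about $\eta_\mathfrak{g}(k(x-c_\mathfrak{g}t))$ in the traveling frame $\xi=x-c_\mathfrak{g}t$, insert a perturbation $e^{\lambda t}e^{i\ell y}w(\xi)$, pass to $z=k\xi$, and use the Floquet--Bloch ansatz $w(z)=e^{i\mu z}W(z)$ with $W$ $2\pi$-periodic and $\mu\in(-\tfrac12,\tfrac12]$. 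This yields a $(\mu,\ell)$-dependent spectral problem $\lambda\,w=\mathcal L_\varepsilon(\mu,\ell)\,w$ on $L^2(\mathbb{T})$. Since the transverse term enters only through $u_{yy}\mapsto-\ell^2$, it augments the non-differentiated coefficient exactly as $\g$ does, so --- as far as the location of the relevant resonance is concerned --- $\mathcal L_\varepsilon(\mu,\ell)$ has the structure of the one-dimensional linearization of~\eqref{e:gost} with $\g$ replaced by $\g+\tfrac43\ell^2$.

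\textbf{Step 2 (the resonance).} At $\varepsilon=0$ the spectrum is purely imaginary, $\lambda_n(\mu)=-i\,\Omega(n+\mu)$, with
\[
\Omega(p)=\frac{\g+\ell^2}{kp}+kp\bigl(\jmath(kp)-c_{\mathfrak{g}_0}\bigr),\qquad c_{\mathfrak{g}_0}=\jmath(k)+\tfrac{\g}{k^2},
\]
and $\Omega$ odd. At the period-doubling exponent $\mu=\tfrac12$ the two modes $e^{\pm iz/2}$ (i.e.\ $p=\pm\tfrac12$) collide at $\lambda=0$ iff $\Omega(\tfrac12)=0$, which a short computation shows is equivalent to
\[
k^2\bigl(\jmath(k)-\jmath(k/2)\bigr)=3\g+4\ell^2 .
\]
Hence, under the hypothesis there is a unique $\ell_\ast>0$, $\ell_\ast^2=\tfrac14\bigl(k^2(\jmath(k)-\jmath(k/2))-3\g\bigr)$, realizing this resonance; for $0<\ell<\ell_\ast$ there is instead a nearby resonance at a Floquet exponent $\mu_\ast(\ell)\in(0,1)\setminus\{\tfrac12\}$ sitting off the origin (a Hamiltonian--Hopf configuration). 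Strict monotonicity of $\jmath$ (Hypothesis~J3) guarantees that no third Fourier mode collides at the same point, so the colliding eigenvalue is a double one.

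\textbf{Step 3 (breaking the resonance).} I would then fix such an $\ell$ (say $\ell=\ell_\ast$) and carry out a Lyapunov--Schmidt reduction in $\varepsilon$ onto the resonant space $\mathrm{span}\{e^{iz/2},e^{-iz/2}\}$. Because $\eta_\mathfrak{g}=\varepsilon\cos z+O(\varepsilon^2)$ and the quadratic term of~\eqref{e:gRMKP} contributes $2\alpha_1(\eta_\mathfrak{g}\,\cdot)$ to the linearization, the product $\cos z\cdot e^{\pm iz/2}$ contains $e^{\mp iz/2}$, so for $\alpha_1=1$ the reduced $2\times 2$ matrix acquires a nonzero off-diagonal part at order $\varepsilon$ (the cubic term and the corrections $\eta_{\mathfrak{g}_2},c_{\mathfrak{g}_2}$ enter at higher order only). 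At the resonant point the two perturbed eigenvalues are then $\pm\varepsilon\sqrt{D}+O(\varepsilon^2)$ for a discriminant $D$ built from the off-diagonal product, and they leave the imaginary axis (i.e.\ $D>0$) precisely when $\jmath$ is increasing --- the generalization of the ``$\b>0$ unstable / $\b<0$ stable'' dichotomy found for~\eqref{e:ost} in \cite{Bhavna2022High-FrequencyEquation}, the sign ultimately reflecting that the two colliding modes carry opposite Krein signatures exactly when $\jmath(k)-\jmath(k/2)>0$. Thus, for $\jmath$ increasing there is an eigenvalue with $\mathrm{Re}\,\lambda>0$ for all small $\varepsilon\neq 0$; by continuity it persists on an $O(\varepsilon)$-interval of $\mu$ about $\tfrac12$ and on an interval of $\ell$, and the corresponding growing mode is a bounded perturbation, non-periodic in $x$ (since $\mu=\tfrac12\neq 0$) and periodic with the finite wavelength $2\pi/\ell_\ast$ in $y$ --- the asserted transverse instability.

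\textbf{Main obstacle.} The hard part will be the sign computation in Step~3: showing that ``$\jmath$ increasing'' is precisely the condition under which the resonance opens into an instability bubble rather than the eigenvalues merely avoiding one another on the imaginary axis. This requires propagating the non-self-adjoint (Hamiltonian/skew) structure carefully through the Floquet reduction --- the factors of $i$, the operator $\partial_z^{-1}$ restricted to the resonant subspace, the zero-mass constraint inherent to Ostrovsky-type models, and the precise $O(\varepsilon)$ entries of the reduced matrix --- so as to pin down $\mathrm{sign}\,D$, equivalently the relative Krein signature of the two colliding modes. Verifying, via J3, that no spurious higher-multiplicity collision occurs at the chosen $\ell$ is a minor additional check.
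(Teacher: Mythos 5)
Your proposal follows essentially the same route as the paper's proof: a Floquet--Bloch reduction, identification of the $\Theta=1$ collision between the modes $n=-1$ and $n=0$ (i.e.\ $p=n+\xi=\pm\tfrac12$ at $\xi=\tfrac12$), whose occurrence at a real transverse wavenumber $\rho_c\neq0$ is exactly the condition $k^2(\jmath(k)-\jmath(k/2))>3\g$, followed by a $2\times2$ reduced-matrix computation showing the colliding pair leaves the imaginary axis at order $\varepsilon$ through the $\alpha_1\cos z$ coupling. One correction to Step~3: the opposite Krein signatures of the two colliding modes follow automatically from $(n+\xi)(n+\xi+\Theta)=\xi(\xi-1)<0$ and have nothing to do with $\jmath$ being increasing --- monotonicity enters only through the existence of the collision (positivity of $\rho_c^2$), already settled in your Step~2 --- so the sign computation you flag as the main obstacle in fact always produces an instability bubble (for $\alpha_1=1$) once the collision exists, as the paper's explicit discriminant $\varsigma^2/\xi^2(\xi-1)^2+4\alpha_1^2k^4\xi(\xi-1)\varepsilon^2$ confirms.
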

\begin{theorem}[Transverse stability]\label{t:7}
Assume that small-amplitude periodic traveling waves of the generalized Ostrovsky equation\eqref{e:gost} are spectrally stable in $L^2(\mathbb T)$ with respect to one-dimensional perturbations. Then, for $\jmath$ decreasing, $\g>0$, and $k>0$, periodic traveling waves \eqref{E:w_ansatz} of the gRMKP equation\eqref{e:gRMKP} are transversely stable with respect to
\begin{enumerate}
    \item either periodic or non-periodic perturbations in the direction of propagation and periodic perturbations in the transverse direction when $\alpha_2=0$
    \item non-periodic (localized or bounded) perturbations in the direction of propagation and periodic perturbations in the transverse direction when $\alpha_2\neq0$.
\end{enumerate}
\end{theorem}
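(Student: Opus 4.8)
\noindent\emph{Proof strategy.} The plan is to realise the transverse spectral problem as a one‑parameter family, indexed by the transverse wavenumber $\ell$, of Hamiltonian eigenvalue problems obtained from the one‑dimensional generalized Ostrovsky problem by adding a \emph{nonnegative} operator, and then to transport the assumed one‑dimensional stability through that family by an index count together with a collision analysis. First I would linearise \eqref{e:gRMKP} about the $y$‑independent wave $\eta_\mathfrak{g}\bigl(k(x-c_\mathfrak{g}t)\bigr)$, pass to the frame $X=x-c_\mathfrak{g}t$, and separate variables $\phi=e^{\lambda t+i\ell y}\Phi(X)$; for non‑periodic perturbations in the direction of propagation one further writes $\Phi=e^{i\xi X}\Psi$ with $\Psi$ co‑periodic and $\xi$ a Floquet exponent, periodic perturbations being $\xi=0$. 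Using $\Phi_{yy}\mapsto-\ell^{2}\Phi$ and that $\eta_\mathfrak{g}$ solves \eqref{e:w1}, one integration in $X$ yields
\[
\lambda\,\Psi=-(\partial_X+i\xi)\,\mathcal L_{\ell,\xi}\,\Psi,\qquad \mathcal L_{\ell,\xi}=\mathcal L_{0,\xi}+\ell^{2}\,\mathcal K_{\xi},
\]
where $\mathcal L_{0,\xi}$ is exactly the linearisation of \eqref{e:gost} at $\eta_\mathfrak{g}$ in the $\xi$‑Floquet fibre and $\mathcal K_{\xi}=\bigl(-(\partial_X+i\xi)^{2}\bigr)^{-1}$ is bounded, self‑adjoint and \emph{nonnegative} on the mean‑zero fibre. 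When $\alpha_2=0$ one may keep $\xi=0$; when $\alpha_2\neq0$ one restricts to $\xi\neq0$, since co‑periodic transverse instability genuinely occurs then (cf.\ Theorem~\ref{t:5}), which is why only the localised/bounded claim is made in that case.

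The decisive input is that $\jmath$ is strictly decreasing (Hypothesis~\ref{h:m}, J3). At $\varepsilon=0$ the operator $\mathcal L_{0,\xi}$ is the Fourier multiplier with symbol $\jmath(\mu)-c_{\mathfrak{g}0}+\gamma/\mu^{2}$ on $e^{i\mu X}$, $\mu\in k(\mathbb Z+\xi)$; using $c_{\mathfrak{g}0}=\jmath(k)+\gamma/k^{2}$ and the monotonicity of $\jmath$, this symbol is strictly negative on every mode with $|\mu|>k$ and bounded above, away from $0$, there uniformly in $\xi$, so only the finitely many modes with $|\mu|\le k$ can be nonnegative. Hence $\mathcal L_{\ell,\xi}$ has a uniformly bounded number of nonnegative directions for all $\ell,\xi$, and the Hamiltonian--Krein index theorem bounds the number of eigenvalues of $-(\partial_X+i\xi)\mathcal L_{\ell,\xi}$ in the open right half‑plane by a finite quantity. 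For $\ell^{2}$ below the uniform spectral gap of $\mathcal L_{0,\xi}$, the nonnegative perturbation $\ell^{2}\mathcal K_{\xi}$ adds at most one positive direction — coming from the translational mode — and, for $\ell\neq0$, kills the kernel, so $\mathcal L_{\ell,\xi}$ then has a one‑dimensional positive subspace and trivial kernel; since no eigenvalue can be born at the origin as $\ell$ grows from $0$ and complex quadruplets are excluded by the index, the right‑half‑plane count stays $0$. The one‑dimensional hypothesis enters exactly to initialise this continuation at $\ell=0$: for $\xi=0$ this is the assumed $L^{2}(\mathbb T)$ stability, and for $\xi\neq0$ the starting point is supplied, for $|\varepsilon|$ small, by the explicit $\varepsilon=0$ spectrum together with the same collision analysis (in particular the small‑amplitude modulational expansion near $\xi=0$).

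It remains to treat larger $\ell$, where finitely many further Fourier modes of $\mathcal L_{\ell,\xi}^{(0)}$ turn positive and the index only bounds, rather than determines, the unstable count, so opposite‑Krein‑signature collisions must be excluded directly — the collision and resonance analysis of Sections~\ref{ss:g1}--\ref{ss:g2}. A mode $\mu$ and its reflection $-\mu$ turn positive simultaneously with the same Krein signature, so collisions among them are harmless; the only remaining mechanism is a collision of an eigenvalue issued from the (now detached) translational mode with one of these, and one checks from the explicit dispersion relation and signatures at $\varepsilon=0$ that for $\jmath$ decreasing such collisions are either non‑resonant or have vanishing leading‑order coupling — in the co‑periodic fibre when $\alpha_2=0$ and in the localised fibres for all $\jmath$ when $\alpha_2\neq0$. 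Assembling the small‑ and large‑$\ell$ analyses over all admissible $\xi$ gives both conclusions. The hard part is exactly this last step: once $\mathcal L_{\ell,\xi}$ has several positive directions the count is no longer automatic, and certifying that none of the finitely many admissible collisions opens an instability bubble — especially those involving the eigenvalues that leave the origin once the transverse term is switched on — is where the monotonicity of $\jmath$, the detailed small‑amplitude expansion of $\eta_\mathfrak{g}$, and the dichotomy $\alpha_2=0$ versus $\alpha_2\neq0$ all come into play.
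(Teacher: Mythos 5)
Your proposal sketches a genuinely different framework (Hamiltonian--Krein index counting plus continuation in the transverse wavenumber) from the paper's direct perturbation argument, but it has a genuine gap: the step you yourself flag as ``the hard part'' is not an afterthought --- it \emph{is} the proof. The paper establishes Theorem~\ref{t:7} by reducing each potentially unstable collision $\{i\Omega^\mathfrak{g}_{n,\rho,\xi},i\Omega^\mathfrak{g}_{n+\Theta,\rho,\xi}\}$ with $(n+\xi)(n+\Theta+\xi)<0$ to a $2\times2$ matrix $\mathscr{G}^\mathfrak{g}_\varepsilon(\rho,\xi)$ and computing its characteristic discriminant: the off-diagonal (coupling) entries are $O(\varepsilon^{\Theta})$ because the wave's $j$th harmonic enters at order $\varepsilon^{j}$, while the diagonal detuning enters at $O(\varsigma)+O(\varepsilon^{2})$, so for $\Theta\geq2$ the discriminant $\Theta^{2}\varsigma^{2}/((n+\xi)^{2}(n+\xi+\Theta)^{2})+k^{4}\Theta^{2}\beta_{2}^{2}\varepsilon^{4}+O(\varepsilon^{2}(|\varsigma|+|\varepsilon|^{3}))$ stays positive and no eigenvalue leaves the imaginary axis. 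The complementary fact --- that the only collision with $O(\varepsilon)$ coupling, namely the adjacent pair $\Theta=1$ (the $\{-1,0\}$ pair in the Floquet fibre), simply does not occur when $\jmath$ is decreasing because the collision condition $\rho^{2}=-\g\mathscr{F}_\mathfrak{g}+k^{2}\mathscr{V}_\mathfrak{g}>0$ fails --- is what makes the monotonicity hypothesis do its work. Your proposal asserts that such collisions ``are either non-resonant or have vanishing leading-order coupling'' without verifying either claim, so the conclusion is not reached.

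Two subsidiary points in your index accounting are also wrong as stated. First, ``adds at most one positive direction'' is not correct: with the paper's self-adjoint factor $\B_{\varepsilon,\rho}$, the diagonal quantities $\tfrac1n\Omega_{n,\rho}=\g(1-1/n^{2})+k^{2}(\jmath(k)-\jmath(kn))-\rho^{2}/n^{2}$ change sign pairwise as $\rho$ grows, so the negative index increases without bound in $\rho$ and the continuation argument cannot close uniformly; this is precisely why every $\Theta\geq2$ collision must be examined. Second, a reflection pair $\{n,-n\}$ has $nm=-n^{2}<0$, which under \eqref{eq:krein}--\eqref{eq:opp} is the \emph{opposite}-signature (potentially unstable) case, not the harmless same-signature case; the paper disposes of those pairs by noting they collide at the origin and treating the $\{-1,1\}$ pair separately in the long-wavelength analysis, not by a signature argument.
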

\begin{theorem}[Transverse stability]\label{t:8}
Assume that small-amplitude periodic traveling waves of the generalized Ostrovsky equation\eqref{e:gost} are spectrally stable in $L^2(\mathbb T)$ with respect to one-dimensional perturbations. Then, for all choices of $\jmath$, $\g>0$, and $k>0$, periodic traveling waves \eqref{E:w_ansatz} of the gRMKP equation\eqref{e:gRMKP} are transversely stable with respect to periodic perturbations in the direction of propagation and, periodic with finite wavelength perturbations in the transverse direction.
\end{theorem}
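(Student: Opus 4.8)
**Proof plan for Theorem~\ref{t:8} (transverse stability under co-periodic perturbations in $x$, finite-wavelength periodic perturbations in $y$).**

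The plan is to recast the linearization of the gRMKP equation \eqref{e:gRMKP} about the one-dimensional wave $\eta_\mathfrak{g}$ as a spectral problem parametrized by the transverse Floquet/Fourier wavenumber $\ell$, and to show that for $\ell$ bounded away from $0$ (finite wavelength) the spectrum stays on the imaginary axis. Writing a perturbation as $u(x,y,t)=\eta_\mathfrak{g}(z)+v(z,t)e^{i\ell y}$ with $v$ being $2\pi$-periodic in $z=kx$, substituting into \eqref{e:gRMKP} and linearizing, I would obtain an evolution equation $\partial_t v=\mathcal{L}_\ell v$ where, after dividing by $\partial_z$ (legitimate on the co-periodic space once the zero-mode is handled, exactly as in the Ostrovsky analysis), one gets schematically $\mathcal{L}_\ell = \partial_z\big(-c_\mathfrak{g}+\mathcal J_k + 2\alpha_1\eta_\mathfrak{g}+3\alpha_2\eta_\mathfrak{g}^2\big) - \tfrac{\g}{k^2}\partial_z^{-1} + \tfrac{\ell^2}{k^2}\partial_z^{-1}$. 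The key structural observation is that the transverse term contributes $+\tfrac{\ell^2}{k^2}\partial_z^{-1}$, which has the \emph{same sign structure} as the (stabilizing) rotation term $-\tfrac{\g}{k^2}\partial_z^{-1}$; so the $\ell$-dependence is equivalent to the replacement $\g \mapsto \g-\ell^2$ in the one-dimensional Ostrovsky operator, i.e. $\mathcal{L}_\ell = \mathcal{L}_0\big|_{\g \to \g-\ell^2}$ up to the obvious bookkeeping.

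Next I would exploit the hypothesis: by assumption the one-dimensional wave is spectrally stable in $L^2(\mathbb{T})$, meaning $\sigma(\mathcal{L}_0)\subset i\R$. The task is to transfer this to $\mathcal{L}_\ell$ for $\ell\neq 0$. For small amplitude $\varepsilon$ this is a perturbation argument: the spectrum of $\mathcal{L}_\ell$ near the origin consists of eigenvalues bifurcating from the collision of Floquet eigenvalues of the constant-coefficient operator ($\varepsilon=0$), and I would show that for $\ell$ of finite size the relevant eigenvalues of the unperturbed operator $\partial_z(\mathcal J_k - c_\mathfrak{g}) + \tfrac{\ell^2-\g}{k^2}\partial_z^{-1}$ acting on $e^{inz}$, namely $i\mu_n(\ell) = i n\big(\jmath(k)- c_\mathfrak{g}\big)\cdot(\text{something}) + i\tfrac{\g-\ell^2}{k^2 n}$, are \emph{simple and non-resonant} (no two of them collide) for $\ell$ in the finite-wavelength range, so no eigenvalue can leave the imaginary axis under an $O(\varepsilon)$ perturbation — genuinely imaginary simple eigenvalues with definite Krein signature are robust. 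The cases where collisions could occur at $\varepsilon=0$ are precisely the resonant wavenumbers excluded in Theorem~\ref{T:sol} together with the $\ell=0$ case (which is the one-dimensional problem, handled by hypothesis), and possibly a discrete set of $\ell$ which one either excludes or handles by the Krein-signature/energy argument below.

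To make the no-escape argument rigorous rather than merely perturbative, I would set up a Hamiltonian/energy framework: \eqref{e:gRMKP} linearized about $\eta_\mathfrak{g}$ has the form $\partial_t v = \J\mathcal{H}_\ell v$ with $\J$ skew-adjoint ($\J=\partial_z$) and $\mathcal{H}_\ell$ self-adjoint, and $\mathcal{H}_\ell = \mathcal{H}_0 + \tfrac{\ell^2}{k^2}\partial_z^{-2}$ where $-\partial_z^{-2}$ is a positive operator on the co-periodic mean-zero space. Hence $\mathcal{H}_\ell \geq \mathcal{H}_0$ in quadratic-form sense, so adding the transverse term only \emph{increases} the number of nonnegative directions of the energy; combined with the constraint count coming from the Ostrovsky stability hypothesis (which says $\mathcal{H}_0$ restricted to the relevant subspace is nonnegative, or has a controlled number of negative directions compensated by symmetries), a Grillakis–Jones-type argument shows $\sigma(\J\mathcal{H}_\ell)\subset i\R$. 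I expect the \textbf{main obstacle} to be exactly this step — namely verifying that the positivity/index bookkeeping for $\mathcal{H}_0$ that underlies the assumed $L^2(\mathbb T)$-stability is preserved, with the same finite count of non-positive directions, once one passes to the finite-wavelength transverse problem and, simultaneously, tracking that the mean-zero/zero-mode projection needed to invert $\partial_z$ does not interact badly with the $\partial_z^{-1}$ (and $\partial_z^{-2}$) transverse terms; the small-amplitude expansion \eqref{E:w_ansatz} should make the constraint computation explicit and close the argument.
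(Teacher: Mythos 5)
Your general setup (Fourier reduction in $y$, division by $\partial_z$ on the mean-zero subspace, the factorization $\mathcal Q_\varepsilon(\rho)=\partial_z\B_{\varepsilon,\rho}$ into skew times self-adjoint, and the idea that instability can only arise from collisions of imaginary eigenvalues) matches the paper. But the two steps you lean on to conclude stability both fail. First, the claim that for finite transverse wavelength the unperturbed eigenvalues are ``simple and non-resonant (no two of them collide)'' is false, and the exceptional set is not the resonant wavenumbers of Theorem~\ref{T:sol} nor a negligible discrete set of $\ell$. The unperturbed eigenvalues are $i\Omega^\mathfrak{g}_{n,\rho}$ with $\Omega^\mathfrak{g}_{n,\rho}=\g(n-1/n)+k^2n(\jmath(k)-\jmath(kn))-\rho^2/n$, and for essentially every pair $(n,m)$ there is a collision value $\rho_c$ (this is the content of Lemmas~\ref{l:c1} and~\ref{l:c2}); since the theorem asserts stability for \emph{all} $\rho$ bounded away from zero, these collisions cannot be excluded --- they must be analyzed. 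The paper does this by computing Krein signatures, observing that for co-periodic perturbations in $x$ the opposite-signature (hence potentially unstable) collisions are exactly the pairs $\{n,n+\Theta\}$ with $n(n+\Theta)<0$ and $\Theta\ge 3$, and then carrying out a second-order reduced $2\times2$ computation: the diagonal detuning of the reduced matrix is $O(\varepsilon^2)$ (through the $c_{\mathfrak{g}_2}\varepsilon^2$ correction to the wave speed) while the coupling between modes $n$ and $n+\Theta$ is only $O(\varepsilon^\Theta)$, so the discriminant $\Theta^2\varsigma^2/((n+\xi)^2(n+\xi+\Theta)^2)+k^4\Theta^2\beta_2^2\varepsilon^4+\cdots$ stays positive and the colliding eigenvalues pass through each other on the imaginary axis. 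That quantitative comparison of orders in $\varepsilon$ is the actual content of the proof, and it is absent from your plan.

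Second, your energy-monotonicity argument has the sign backwards, and the correct sign destroys it. From \eqref{e:opt} the transverse term enters as $-(\g+\rho^2)$, so the self-adjoint factor is $\B_{\varepsilon,\rho}=\B_{\varepsilon,0}+\rho^2\partial_z^{-2}$; on the mean-zero periodic space $\partial_z^{-2}$ is \emph{negative} definite ($\partial_z^{-2}e^{inz}=-n^{-2}e^{inz}$), so $\B_{\varepsilon,\rho}\le\B_{\varepsilon,0}$ and the negative index of the energy \emph{grows} with $\rho$ --- equivalently, the effective replacement is $\g\mapsto\g+\rho^2$ in the operator but the Krein signature $\operatorname{sgn}(\Omega^\mathfrak{g}_{n,\rho}/n)$ of each mode decreases as $\rho^2$ increases. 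This is precisely the mechanism that produces the new opposite-signature collisions, and it is why a Grillakis--Jones counting argument based on ``$\mathcal H_\ell\ge\mathcal H_0$'' cannot close the proof; indeed, if such a monotonicity argument worked it would equally rule out the finite-wavelength transverse \emph{instabilities} established in Theorems~\ref{t:2} and~\ref{t:6}, which are genuine. To repair your proposal you would need to replace both steps by the collision classification plus the $\varepsilon^2$-versus-$\varepsilon^\Theta$ discriminant estimate described above.
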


Theorems~\ref{t:5}-\ref{t:8} have interesting consequences on example equations like RMBO-KP, RM-fKdV-KP, RMG-KP, RM-mKdV-KP, RM-Whitham-KP, and RMILW-KP. We list all these results in Table~\ref{tab:col1}.

\begin{table}[ht]
    \centering
    \begin{tabular}{|c|c|c|c|c|c|c|}
    \hline
       $\textbf{Equations}$  & \multicolumn{3}{c|}{\textbf{LWTP}}& \multicolumn{3}{c|}{\textbf{F/SWTP}} \\\hline
       & \multicolumn{2}{c|}{\textbf{Periodic}} & \textbf{Non-periodic}& \textbf{Periodic} & \multicolumn{2}{c|}{\textbf{Non-periodic}} \\\hline
       & $\boldsymbol{\beta>0}$ & $\boldsymbol{\beta\leq0}$ & $\boldsymbol{\beta\in\R}$ &  $\boldsymbol{\beta\in\R}$ & $\boldsymbol{\beta>0}$ & $\boldsymbol{\beta\leq0}$ \\\hline
       RMBO-KP & Unstable & Stable & Stable & Stable & Unstable & Stable \\\hline
       RM-fKdV-KP & Unstable & Stable & Stable & Stable & Unstable & Stable \\\hline
       RMG-KP & Unstable & Unstable & Stable& Stable&Unstable&Stable\\\hline
       RM-mKdV-KP & Unstable & Unstable & Stable& Stable&Unstable&Stable\\\hline
       RM-Whitham-KP & Stable & Unstable & stable & stable & Stable & Unstable \\\hline
       RMILW-KP & Unstable & Stable & Stable & Stable & Unstable & Stable \\\hline
    \end{tabular}
    \caption{Transverse stability or instability for different type of equations. Here `LWTP' stands for {\em Long wavelength transverse perturbations} and `F/SWTP' stands for {\em Finite or short wavelength transverse perturbations}. }
    \label{tab:col1}
\end{table}

In Section~\ref{sec:RMKP}, we discuss RMKP equation \eqref{e:RMKP} and prove Theorems~\ref{t:1}-\ref{t:4}. All results for RMKP equation generalizes for gRMKP equation \eqref{e:gRMKP} and analysis can be lifted from Section~\ref{sec:RMKP} as it is. We discuss this briefly in Section~\ref{sec:gRMKP} and prove Theorems~\ref{t:5}-\ref{t:8}. We discuss applications of Theorems~\ref{t:5}-\ref{t:8} in Section~\ref{sec:app} and provide a proof for results in Table~\ref{tab:col} for example equations.

\subsection*{Notations}\label{sec:notations}
The article will make use of the notations listed below. The term $L^2(\mathbb{R})$ refers to the collection of real or complex-valued, Lebesgue measurable functions $g(x)$ over $\mathbb{R}$ such that
\[
\|g\|_{L^2(\mathbb{R})}=\Big(\frac{1}{2\pi}\int_\R |g|^2~dx\Big)^{1/2}<+\infty \quad 
\]
and the term $L^2(\mathbb{T})$ refers to the space of $2\pi$-periodic, measurable, real or complex-valued functions over $\mathbb{R}$ such that
\[
\|f\|_{L^2(\mathbb{T})}=\Big(\frac{1}{2\pi}\int^{2\pi}_0 |f|^2~dx\Big)^{1/2}<+\infty. 
\]
Here, $L^2_0(\mathbb{T})$ is the space of square-integrable functions with of zero-mean, 
\begin{equation}\label{e:zerom}
    L^2_0(\T)=\left\{f\in L^2(\T)\;:\;\int_0^{2\pi}f(z)~dz=0\right\}.
\end{equation}
For $s \in \mathbb{R}$, let $H^{s}(\mathbb{R})$ consists of tempered distributions such that
$$
\|f\|_{H^{s}(\mathbb{R})}=\left(\int_{\mathbb{R}}\left(1+|t|^{2}\right)^{s}|\hat{f}(t)|^{2} d t\right)^{\frac{1}{2}}<+\infty,
$$
and
$$
H^{s}(\mathbb{T})=\left\{f \in H^{s}(\mathbb{R}): f \text { is } 2 \pi \text {-periodic }\right\}.
$$
Furthermore, the $L^2(\mathbb{T})$-inner product is defined as
\begin{equation}\label{def:i-product}
\langle f,g\rangle=\frac{1}{2\pi}\int^{2\pi}_{0} f(z)\overline{g}(z)~dz
=\sum_{n\in\mathbb{Z}} \widehat{f}_n\overline{\widehat{g}_n}.
\end{equation}

\section{RMKP equation}\label{sec:RMKP}
\subsection{Constrution of the spectral problem}
In a traveling frame of reference moving with velocity $c\in\R$, with coordinates $(x,t)\to (k(x-ct),t)$, \eqref{e:RMKP} becomes 
\begin{equation}\label{e:gkplin1}
k(\eta_t - ck \eta_z -\b k^3 \eta_{zzz}+k(\eta^2)_z)_z+\eta_{yy}-\g \eta=0,
\end{equation}where $\eta$ is the one-dimensional periodic traveling wave solution of \eqref{e:RMKP}, given in \eqref{e:expptw}.
We consider perturbations to $\eta$ of the form $\eta+\epsilon\eta^\ast+O(\epsilon^2)$ with $0<|\epsilon|\ll 1$, we arrive at
\begin{equation}\label{e:gkplin}
k(\eta^\ast_t - ck \eta^\ast_z -\b k^3 \eta^\ast_{zzz}+2k(\eta \eta^\ast)_z)_z+\eta^\ast_{yy}-\g \eta^\ast=0,
\end{equation}
We are looking for a solution of the form
\begin{equation}\label{e:per}
\eta^\ast(z,y,t)=e^{\frac{\lambda}{k} t+i\rho y}\varphi(z),\quad \lambda\in \mathbb{C},~ \rho\in\R 
\end{equation}to arrive at
\begin{equation}\label{e:opt}
\mathcal G_\varepsilon(\lambda,\rho) \varphi:=(\lambda\partial_z- k^2\partial_z^2(c+\b k^2\partial_z^2-2\eta)-\rho^2-\g)\varphi=0
\end{equation}
The one-dimensional periodic traveling wave solution $\eta$ of \eqref{e:RMKP} is transversely spectrally unstable if there exist a $\rho\neq0$ such that there is a $\lambda$ satisfying \eqref{e:opt} with $\Re(\lambda)>0$, then there exists a
perturbation \eqref{e:per} that grows exponentially in time $(t>0)$, otherwise $\eta$ is deemed to be transversely spectrally stable.
The type of perturbations is governed on the functional space in which $\varphi$ lies. We will work with two type of functional spaces - space of co-periodic functions, and space of square-integrable functions over the whole real line. Depending on the space, there will be two type of perturbations - periodic perturbations, and localized or bounded perturbations.
\begin{definition}(Transverse (in)stability)
Assuming that $2\pi/k$-periodic traveling wave solution $u(x,y,t)=\eta(k(x-ct))$ of \eqref{e:RMKP} is a stable solution of the one-dimensional equation \eqref{e:ost} where $\eta$, $c$ are as in \eqref{e:expptw}, we say that the periodic wave $\eta$ in \eqref{e:expptw} is transversely spectrally stable with respect to two-dimensional periodic perturbations (resp. non-periodic (localized or bounded perturbations)) if the RMKP operator $\mathcal G_\varepsilon(\lambda,\rho)$ acting in $L^2(\T)$ (resp.  $L^2(\R)$ or $C_b(\R)$) is invertible, for any $\lambda\in\C$, $\Re(\lambda)>0$ and any $\rho\neq 0$, otherwise it is deemed transversely spectrally unstable. 
\end{definition}

\subsubsection*{\underline{Periodic perturbations}}\label{s:per}In case of periodic perturbations, we investigate the invertibility of the operator $\mathcal G_\varepsilon(\lambda,\rho)$ acting in $L^2(\T)$ with domain $H^4(\T)$, for $\lambda\in\C$ with $\Re(\lambda)>0$, $0\neq\rho\in\R$. The invertibility problem
\[
\mathcal G_{\varepsilon}(\lambda,\rho)\varphi=0,\quad \varphi\in L^2(\T)
\]
is transformed into a spectral problem which requires invertibility of $\partial_z$. Since $\partial_z$ is not invertible in $L^2(\mathbb{T})$, we restrict the problem to mean-zero subspace $L^2_0(\mathbb{T})$, defined in \eqref{e:zerom}, of $L^2(\mathbb{T})$. The study of invertibility of the operator $\mathcal{G}_\varepsilon(\lambda,\rho)$ in $L^2(\T)$ is equivalent to the study of invertibility of the operator $\mathcal{G}_\varepsilon(\lambda,\rho)$ in $L_0^2(\T)$, as shown in the following lemma.
\begin{lemma}\label{l:1}
$\mathcal{G}_\varepsilon(\lambda,\rho)$ acting in $L^2(\T)$ with domain $H^4(\T)$ is not invertible if and only if $\lambda$ belongs to $L_0^2(\mathbb{T})$-spectrum of the operator $\mathcal Q_{\varepsilon}(\rho)$, where 
\begin{equation}\label{e:op5}
	\mathcal Q_{\varepsilon}(\rho):= k^2\partial_z(c+\beta k^2\partial_z^2-2\eta)+(\g+\rho^2)\partial_z^{-1}.\\
\end{equation}
\end{lemma}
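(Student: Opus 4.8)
The plan is to analyze the operator $\mathcal G_\varepsilon(\lambda,\rho)=\lambda\partial_z-k^2\partial_z^2(c+\beta k^2\partial_z^2-2\eta)-(\rho^2+\g)$ acting on $L^2(\T)$ by splitting the domain and range into the constant functions and the mean-zero subspace $L^2_0(\T)$. First I would observe that every term of $\mathcal G_\varepsilon(\lambda,\rho)$ except the zeroth-order multiplication by $-(\rho^2+\g)$ carries at least one factor of $\partial_z$ on the left, hence maps into $L^2_0(\T)$; on constants $\mathcal G_\varepsilon(\lambda,\rho)$ acts as multiplication by $-(\rho^2+\g)\neq 0$. Writing $\varphi=\widehat\varphi_0+\varphi_0$ with $\varphi_0\in L^2_0(\T)$ and decomposing the target space the same way, the equation $\mathcal G_\varepsilon(\lambda,\rho)\varphi=f$ with $f=\widehat f_0+f_0$ becomes the triangular system $-(\rho^2+\g)\widehat\varphi_0=\widehat f_0$ together with $\mathcal G_\varepsilon(\lambda,\rho)\big(\widehat\varphi_0+\varphi_0\big)\big|_{L^2_0}=f_0$. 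Since $\rho\neq 0$ the first equation is always uniquely solvable for the constant $\widehat\varphi_0$, and substituting this known constant back reduces everything to solving, for $\varphi_0$, an equation of the form $\mathcal G_\varepsilon(\lambda,\rho)\varphi_0=\tilde f_0$ in $L^2_0(\T)$, where $\tilde f_0$ differs from $f_0$ by the known quantity $2k^2\widehat\varphi_0\,\partial_z^2\eta\in L^2_0(\T)$ (the only term of $\mathcal G_\varepsilon$ that does not annihilate constants modulo the zeroth-order term). Thus $\mathcal G_\varepsilon(\lambda,\rho)$ is invertible on $L^2(\T)$ if and only if its restriction $\mathcal G_\varepsilon(\lambda,\rho)|_{L^2_0(\T)}\colon H^4(\T)\cap L^2_0(\T)\to L^2_0(\T)$ is invertible.

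Next I would convert this restricted invertibility into the spectral statement. On $L^2_0(\T)$ the operator $\partial_z$ is invertible with bounded inverse $\partial_z^{-1}$ (it acts on Fourier modes $e^{inz}$, $n\neq 0$, as multiplication by $1/(in)$). Factoring $\partial_z$ out on the left, write
\begin{equation*}
\mathcal G_\varepsilon(\lambda,\rho)=\partial_z\Big(\lambda-k^2\partial_z(c+\beta k^2\partial_z^2-2\eta)-(\rho^2+\g)\partial_z^{-1}\Big)
=\partial_z\big(\lambda-\mathcal Q_\varepsilon(\rho)\big)
\end{equation*}
as operators on $L^2_0(\T)$, where $\mathcal Q_\varepsilon(\rho)=k^2\partial_z(c+\beta k^2\partial_z^2-2\eta)+(\g+\rho^2)\partial_z^{-1}$ is exactly \eqref{e:op5}. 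Here one must be a little careful: $\partial_z^{-1}(u_z)=u$ only modulo constants, so $\partial_z^{-1}$ composed with $k^2\partial_z^2(c+\beta k^2\partial_z^2-2\eta)$ equals $k^2\partial_z(c+\beta k^2\partial_z^2-2\eta)$ as an operator on $L^2_0(\T)$, which is the identity one needs; I would record this reduction carefully. Since $\partial_z$ is an isomorphism of $H^4(\T)\cap L^2_0(\T)$ onto $H^3(\T)\cap L^2_0(\T)$ (and more to the point, is injective with dense range on $L^2_0$), the operator $\mathcal G_\varepsilon(\lambda,\rho)$ is invertible on $L^2_0(\T)$ precisely when $\lambda-\mathcal Q_\varepsilon(\rho)$ is, i.e. precisely when $\lambda\notin\operatorname{spec}_{L^2_0(\T)}\mathcal Q_\varepsilon(\rho)$. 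Combining with the first paragraph gives the claim.

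The main obstacle — really the only subtle point — is the bookkeeping around the non-invertibility of $\partial_z$ on the full space $L^2(\T)$ and the fact that $\partial_z^{-1}\partial_z$ is the identity only on mean-zero functions: one has to check that the zeroth-order term $-(\rho^2+\g)$, which does survive on constants, does not obstruct the reduction, and this is exactly where the hypothesis $\rho\neq 0$ enters (it guarantees $\rho^2+\g>0$ so the constant mode is handled trivially and the $\partial_z^{-1}$ appearing in $\mathcal Q_\varepsilon(\rho)$ is applied only to genuine derivatives). I would also note in passing that domain issues are harmless: $\mathcal Q_\varepsilon(\rho)$ with domain $H^3(\T)\cap L^2_0(\T)$ is a relatively compact perturbation of the constant-coefficient operator $k^2\partial_z(c+\beta k^2\partial_z^2)+(\g+\rho^2)\partial_z^{-1}$, so its spectrum is well-behaved, but this is not needed for the equivalence itself and I would defer any such remarks to the subsequent spectral analysis.
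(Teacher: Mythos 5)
Your proposal is correct, and the overall strategy is the same as the paper's: reduce the invertibility question to the mean-zero subspace $L^2_0(\T)$, where $\partial_z$ is invertible, and then rewrite $\mathcal G_\varepsilon(\lambda,\rho)\varphi=0$ as the spectral problem $\mathcal Q_\varepsilon(\rho)\varphi=\lambda\varphi$. The one place where you genuinely diverge is in how the reduction to $L^2_0(\T)$ is justified. The paper first invokes the compact resolvent of $\mathcal G_\varepsilon(\lambda,\rho)$ to replace ``not invertible'' by ``zero is an eigenvalue,'' and then observes that any kernel element is automatically mean-zero because $\gamma+\rho^2\neq 0$; surjectivity never has to be discussed. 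You instead give a direct block-triangular decomposition of the solvability problem $\mathcal G_\varepsilon(\lambda,\rho)\varphi=f$ over constants plus $L^2_0(\T)$, correctly tracking the off-diagonal term $2k^2\widehat\varphi_0\,\partial_z^2\eta$, which handles injectivity and surjectivity at once without appealing to discreteness of the spectrum; you then make the factorization $\mathcal G_\varepsilon(\lambda,\rho)=\partial_z\bigl(\lambda-\mathcal Q_\varepsilon(\rho)\bigr)$ on $L^2_0(\T)$ explicit, which the paper leaves implicit. Your version is slightly more self-contained (and would survive in settings where the resolvent is not compact), at the cost of a little extra bookkeeping; the paper's is shorter because the compact-resolvent fact is reused later anyway. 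One small slip: you attribute the solvability of the constant mode to $\rho\neq 0$, but since $\gamma>0$ throughout, it is $\gamma+\rho^2>0$ that does the work (the paper credits $\gamma\neq 0$); this does not affect the argument.
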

\begin{proof}
Observe that if $\varphi\in L_0^2(\mathbb{T})$, then $\mathcal{G}_\varepsilon(\lambda,\rho)\varphi\in L_0^2(\mathbb{T})$ implies that the subspace $L_0^2(\mathbb{T})\subset L^2(\T)$ is $\mathcal{G}_\varepsilon(\lambda,\rho)$- invariant. Additionally, the spectrum is composed of discrete eigenvalues with finite multiplicity because the operator $\mathcal{G}_\varepsilon(\lambda,\rho)$ acting on $L^2(\mathbb{T})$ has a compact resolvent. Therefore, $\mathcal{G}_\varepsilon(\lambda,\rho)$ is not invertible if and only if zero is an eigenvalue of $\mathcal{G}_\varepsilon(\lambda,\rho)$, that is, if and only if there exists a non-zero $\psi\in H^4(\T)$ such that $\mathcal{G}_\varepsilon(\lambda,\rho)\psi=0$. Since $\gamma\neq0$, we conclude that $\psi\in L^2_0(\T)$. Therefore, zero is an eigenvalue of $\mathcal{G}_\varepsilon(\lambda,\rho)$ acting in $L^2(\T)$ if and only if zero is an eigenvalue of the restriction of $\mathcal{G}_\varepsilon(\lambda,\rho)$ to $L^2_0(\T)$. This implies that $\mathcal{G}_\varepsilon(\lambda,\rho)$ acting in $L^2(\T)$ with domain $H^4(\T)$ is not invertible if and only if its restriction to the subspace $L^2_0(\T)$ is not invertible. Moreover, for a $\varphi\in L_0^2(\mathbb{T})$, $\mathcal G_{\varepsilon}(\lambda,\rho)\varphi=0$ if and only if $\mathcal Q_{\varepsilon}(\rho)\varphi=\lambda \varphi$, where
\begin{equation}\label{e:op}
	\mathcal Q_{\varepsilon}(\rho):= k^2\partial_z(c+\beta k^2\partial_z^2-2\eta)+(\g+\rho^2)\partial_z^{-1}.\\
\end{equation}
Therefore, the operator $\mathcal G_{\varepsilon}(\lambda,\rho)$ is not invertible in $L^2_0(\mathbb{T})$ for some $\lambda\in \C$ if and only if  $\lambda\in\operatorname{spec}_{L^2_0(\mathbb{T})}(\mathcal Q_{\varepsilon}(\rho))$, that is, $L_0^2(\mathbb{T})$-spectrum of the operator $\mathcal Q_{\varepsilon}(\rho)$. Hence the lemma.
\end{proof}
 We arrive at pseudo-differential spectral problem
\begin{equation}\label{e:sp1}
  \mathcal Q_{\varepsilon}(\rho)\varphi=\lambda\varphi,
\end{equation}where $\varphi\in L_0^2(\T)$. With respect to periodic perturbations, we will study the spectrum of the operator $\mathcal Q_{\varepsilon}(\rho)$ acting in  $L^2_0(\T)$ with domain $H^{3}(\T)\cap L^2_0(\T)$. 
\begin{proposition}\label{p:1}
The operator $\mathcal Q_{\varepsilon}(\rho)$ possess following properties.
\begin{enumerate}
    \item The operator $\mathcal Q_{\varepsilon}(\rho)$ commutes with the reflection through the real axis.
    \item The operator $\mathcal Q_{\varepsilon}(\rho)$ anti-commutes with the reflection through the origin and the imaginary axis.
    \item The spectrum of $\mathcal Q_{\varepsilon}(\rho)$ is symmetric with respect to the reflections through origin, real axis, and imaginary axis.
\end{enumerate}
\end{proposition}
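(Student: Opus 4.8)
The plan is to verify each symmetry statement by exhibiting the appropriate (anti-)linear involution that conjugates $\mathcal Q_\varepsilon(\rho)$ to itself (resp. its negative), then read off the spectral consequences. Throughout I use that $\eta = \eta(\varepsilon;k)$ is real-valued and \emph{even} in $z$, that $c = c(\varepsilon;k)$ is real, and that $\gamma + \rho^2 > 0$ is real; these are the only structural facts needed.

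For item (1), let $\mathcal R$ denote complex conjugation, $(\mathcal R\varphi)(z) = \overline{\varphi(z)}$. Since $k^2$, $c$, $\beta$, $\gamma+\rho^2$ are real and $\eta$ is real-valued, each of the operators $\partial_z$, $\partial_z^2$, $\partial_z^{-1}$ (the latter defined on $L^2_0(\T)$ via its Fourier multiplier $1/(in)$, which satisfies $\overline{1/(in)} = 1/(i(-n))$ — consistent with conjugation reversing the Fourier index), and multiplication by $\eta$, commutes with $\mathcal R$ up to the reality of the coefficients: $\mathcal R\,\partial_z = \partial_z\,\mathcal R$, $\mathcal R\,\partial_z^{-1} = \partial_z^{-1}\,\mathcal R$, $\mathcal R(\eta\,\cdot\,) = \eta\,\mathcal R$. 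Hence $\mathcal R\,\mathcal Q_\varepsilon(\rho) = \mathcal Q_\varepsilon(\rho)\,\mathcal R$, which is the assertion that $\mathcal Q_\varepsilon(\rho)$ commutes with reflection through the real axis. For item (2), let $\mathcal S$ be the parity operator $(\mathcal S\varphi)(z) = \varphi(-z)$ and consider the two involutions $\mathcal S$ and $\mathcal{RS}$. Under $z \mapsto -z$: $\partial_z$ is odd ($\mathcal S\,\partial_z = -\partial_z\,\mathcal S$), $\partial_z^2$ is even, $\partial_z^{-1}$ is odd, and because $\eta$ is even, multiplication by $\eta$ commutes with $\mathcal S$. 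Therefore $k^2\partial_z(c + \beta k^2\partial_z^2 - 2\eta)$ is odd under $\mathcal S$ and $(\gamma+\rho^2)\partial_z^{-1}$ is odd under $\mathcal S$, so $\mathcal S\,\mathcal Q_\varepsilon(\rho) = -\mathcal Q_\varepsilon(\rho)\,\mathcal S$: the operator anti-commutes with reflection through the origin. Composing with $\mathcal R$ (which commutes with $\mathcal Q_\varepsilon(\rho)$ by item (1)) gives $(\mathcal{RS})\,\mathcal Q_\varepsilon(\rho) = -\mathcal Q_\varepsilon(\rho)\,(\mathcal{RS})$, and $\mathcal{RS}$ is the anti-linear reflection through the imaginary axis.

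Item (3) is then immediate from the operator identities in (1) and (2). If $\mathcal Q_\varepsilon(\rho)\varphi = \lambda\varphi$ with $\varphi \neq 0$, then: applying $\mathcal R$ gives $\mathcal Q_\varepsilon(\rho)(\mathcal R\varphi) = \bar\lambda(\mathcal R\varphi)$, so $\bar\lambda \in \operatorname{spec}(\mathcal Q_\varepsilon(\rho))$ — symmetry about the real axis; applying $\mathcal S$ gives $\mathcal Q_\varepsilon(\rho)(\mathcal S\varphi) = -\lambda(\mathcal S\varphi)$, so $-\lambda \in \operatorname{spec}(\mathcal Q_\varepsilon(\rho))$ — symmetry about the origin; applying $\mathcal{RS}$ gives $\mathcal Q_\varepsilon(\rho)(\mathcal{RS}\varphi) = -\bar\lambda(\mathcal{RS}\varphi)$, so $-\bar\lambda \in \operatorname{spec}(\mathcal Q_\varepsilon(\rho))$ — symmetry about the imaginary axis. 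Since $\mathcal Q_\varepsilon(\rho)$ has compact resolvent on $L^2_0(\T)$ (as noted in the proof of Lemma 3.1, the resolvent of $\mathcal G_\varepsilon$ is compact and the two problems are equivalent), its spectrum is a discrete set of eigenvalues of finite multiplicity, so these eigenvalue statements give the full spectral symmetry; one should add the routine remark that $\mathcal R$, $\mathcal S$, $\mathcal{RS}$ are bijections of the domain $H^3(\T)\cap L^2_0(\T)$ onto itself, so no domain issues arise.

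The only mildly delicate point — and the one I would be most careful about — is the behavior of $\partial_z^{-1}$ under conjugation and parity on $L^2_0(\T)$: one must check on the Fourier side that $\partial_z^{-1}$, the multiplier $n \mapsto 1/(in)$ for $n\neq 0$, indeed satisfies $\mathcal R\partial_z^{-1} = \partial_z^{-1}\mathcal R$ and $\mathcal S\partial_z^{-1} = -\partial_z^{-1}\mathcal S$. This is a one-line check ($\overline{1/(in)} = -1/(in) = 1/(i(-n))$ and the symbol is odd in $n$), but it is the place where a sign error would propagate, so I would state it explicitly rather than fold it into "a straightforward calculation." Everything else is bookkeeping with the parity of differential operators and the reality of the coefficients.
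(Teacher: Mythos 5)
Your proposal is correct and follows essentially the same route as the paper: introduce the three reflections (conjugation, parity, and their composition), verify the commutation/anti-commutation relations term by term using that the coefficients are real and $\eta$ is real and even, and then read off the spectral symmetries from the eigenvalue equation. Your explicit Fourier-side check of how $\partial_z^{-1}$ behaves under conjugation and parity is a detail the paper folds into an unannotated chain of equalities, but it does not change the argument.
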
 
\begin{proof}
We consider $\mathcal{R}_r$, $\mathcal{R}_i$ and $\mathcal{R}_o$ to be the reflections through the real axis, imaginary axis, and the origin, respectively, defined as follows
\begin{equation}\label{e:ref}
 \mathcal{R}_r\psi(z)=\overline{\psi(z)},\quad   \mathcal{R}_i\psi(z)=\overline{\psi(-z)}\quad\text{and}\quad \mathcal{R}_o\psi(z)=\psi(-z)
\end{equation}
Assume $\lambda$ is an eigenvalue of $\mathcal Q_{\varepsilon}(\rho)$ with an associated eigenvector $\varphi$, then we have
\begin{equation}\label{e:eig}
    \mathcal Q_{\varepsilon}(\rho)\varphi=\lambda\varphi
\end{equation}
Observe that
\begin{equation}\label{e:01}
    (\mathcal Q_{\varepsilon}(\rho)\mathcal{R}_r\psi)(z)=\mathcal Q_{\varepsilon}(\rho)(\mathcal{R}_r\psi(z))=\mathcal Q_{\varepsilon}(\rho)\overline{\psi(z)}=(\overline{\mathcal Q_{\varepsilon}(\rho)\psi})(z)=(\mathcal{R}_r\mathcal Q_{\varepsilon}(\rho)\psi)(z)
\end{equation}
\begin{equation}\label{e:2}
(\mathcal Q_{\varepsilon}(\rho)\mathcal{R}_i\psi)(z)=\mathcal Q_{\varepsilon}(\rho)(\mathcal{R}_i\psi(z))=\mathcal Q_{\varepsilon}(\rho)\overline{\psi(-z)}=-(\overline{\mathcal Q_{\varepsilon}(\rho)\psi})(-z)=-(\mathcal{R}_i\mathcal Q_{\varepsilon}(\rho)\psi)(z),
\end{equation}
\begin{equation}\label{e:3}
(\mathcal Q_{\varepsilon}(\rho)\mathcal{R}_o\psi)(z)=\mathcal Q_{\varepsilon}(\rho)(\mathcal{R}_o\psi(z))=\mathcal Q_{\varepsilon}(\rho)\psi(-z)=-(\mathcal Q_{\varepsilon}(\rho)\psi)(-z)=-(\mathcal{R}_o\mathcal Q_{\varepsilon}(\rho)\psi)(z),
\end{equation}
From \eqref{e:01}, $\mathcal Q_{\varepsilon}(\rho)$ commutes with $\mathcal{R}_r$. From \eqref{e:2} and \eqref{e:3}, we conclude that $\mathcal Q_{\varepsilon}(\rho)$ anti-commutes with $\mathcal{R}_i$ and $\mathcal{R}_o$. Using \eqref{e:eig}, we arrive at
\[\mathcal Q_{\varepsilon}(\rho)\mathcal{R}_r\varphi=\mathcal{R}_r\mathcal Q_{\varepsilon}(\rho)\varphi=\overline{\lambda}\mathcal{R}_r\varphi
\]
\[\mathcal Q_{\varepsilon}(\rho)\mathcal{R}_i\varphi=-\mathcal{R}_i\mathcal Q_{\varepsilon}(\rho)\varphi=-\overline{\lambda}\mathcal{R}_i\varphi
\]
\[\mathcal Q_{\varepsilon}(\rho)\mathcal{R}_o\varphi=-\mathcal{R}_o\mathcal Q_{\varepsilon}(\rho)\varphi=-\lambda\mathcal{R}_o\varphi
\]
We conclude from here that if $\lambda$ is an eigenvalue of $\mathcal Q_{\varepsilon}(\rho)$ with associated eigenvector $\varphi$, then $\overline{\lambda}$, $-\overline{\lambda}$ and $-\lambda$ are also eigenvalues of $\mathcal Q_{\varepsilon}(\rho)$ with associated eigenvectors $\mathcal{R}_r\varphi$, $\mathcal{R}_i\varphi$ and $\mathcal{R}_o\varphi$, respectively. Therefore, the spectrum of $\mathcal Q_{\varepsilon}(\rho)$ is symmetric with respect to the reflections through the origin, real axis and imaginary axis.
\end{proof}
\subsubsection*{\underline{Localized or bounded perturbations}} We check the invertibility of the operator $\mathcal G_\varepsilon(\lambda,\rho)$ acting in $L^2(\R)$ or $C_b(\R)$ with domain $H^{4}(\R)$ or   $C_b^{4}(\R)$, for $\lambda\in\C$ with $\Re(\lambda)>0$, $0\neq\rho\in\R$.
In $L^2(\R)$ or $C_b(\R)$, the operator $\mathcal G_\varepsilon(\lambda, \rho)$ has continuous spectrum. Since the coefficients of the operator $\mathcal G_\varepsilon(\lambda, \rho)$ are $2\pi$-periodic, we can use the Floquet theory such that all solutions of \eqref{e:opt} in $L^2(\R)$ or $C_b(\R)$ are of the form $\varphi(z)=e^{i\xi z}\Tilde{\varphi}(z)$ where $\xi\in\left(-1/2,1/2\right]$ is the Floquet exponent and $\Tilde{\varphi}$ is a $2\pi$-periodic function, see \cite{Haragus2008STABILITYEQUATION} for a comparable circumstance. By following same arguments as in the proof of \cite[Proposition A.1]{Haragus2008STABILITYEQUATION}, we can deduce that the study of the invertibility of $\mathcal G_\varepsilon(\lambda,\rho)$ in $L^2(\R)$ or $C_b(\R)$ is equivalent to the invertibility of the linear operators $\mathcal G_{\varepsilon,\xi}(\lambda,\rho)$ in $L^2(\mathbb{T})$ with domain $H^{4}(\mathbb{T})$, for all $\xi\in\left(-1/2,1/2\right]$, where
\begin{align*}\label{E:bloch}
\mathcal G_{\varepsilon,\xi}(\lambda,\rho) := \lambda(\partial_z+i\xi)- k^2(\partial_z+i\xi)^2(c+\b k^2(\partial_z+i\xi)^2-2\eta)-\rho^2-\g.
\end{align*}
We will confine ourselves to the case $\xi\neq0$ because $\xi=0$ refers to periodic perturbations. The $L^2(\mathbb{T})$-spectra of operator $\mathcal G_{\varepsilon,\xi}(\lambda,\rho)$ consist of eigenvalues of finite multiplicity. Therefore, $\mathcal G_{\varepsilon,\xi}(\lambda,\rho)$ is not invertible in $L^2(\mathbb{T})$ if and only if zero is an eigenvalue of $\mathcal G_{\varepsilon,\xi}(\lambda,\rho)$. For a $\Psi\in L^2(\mathbb{T})$, $\mathcal G_{\varepsilon,\xi}(\lambda,\rho)\Psi=0$ if and only if $\mathcal Q_{\varepsilon}(\rho,\xi)\Psi=\lambda \Psi$, where
\begin{equation}\label{e:op7}
	\mathcal{Q}_\varepsilon(\rho,\xi):=k^2(\partial_z+i\xi)(c+\beta k^2(\partial_z+i\xi)^2-2\eta)+(\g+\rho^2)(\partial_z+i\xi)^{-1}.
\end{equation}
Therefore, the operator $\mathcal G_{\varepsilon,\xi}(\lambda,\rho)$ is not  invertible in $L^2(\mathbb{T})$ for some $\lambda\in \C$ and $\xi\neq 0$ if and only if  $\lambda\in\operatorname{spec}_{L^2(\mathbb{T})}(\mathcal Q_{\varepsilon}(\rho,\xi))$, $L^2(\mathbb{T})$-spectrum of the operator.
This is straight forward to observe that $\operatorname{spec}_{L^2(\mathbb{T})}(\mathcal Q_{\varepsilon}(\rho,\xi))$ is not symmetric with respect to the reflections through the real axis and the origin, rather it exhibit following properties.
\begin{proposition}\label{p:02}
The operator $\mathcal Q_{\varepsilon}(\rho,\xi)$ possesses following properties.
\begin{enumerate}
    \item The operator $\mathcal Q_{\varepsilon}(\rho,-\xi)$ commutes with the reflection through the real axis.
    \item The operator $\mathcal Q_{\varepsilon}(\rho,\xi)$ anti-commutes with the reflection through the imaginary axis.
    \item The operator $\mathcal Q_{\varepsilon}(\rho,-\xi)$ anti-commutes with the reflection through the origin.
    \item The spectrum of $\mathcal Q_{\varepsilon}(\rho,\xi)$ is symmetric with respect to the reflections through the imaginary axis.
    \item The spectrum of $\mathcal Q_{\varepsilon}(\rho,-\xi)$ is symmetric with respect to the reflections through the real axis and the origin.
\end{enumerate}
\end{proposition}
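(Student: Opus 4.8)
The plan is to imitate the structure of the proof of Proposition~\ref{p:1}, but to keep careful track of how the Floquet exponent $\xi$ transforms under each reflection. The key observation is that the operator $\mathcal{Q}_\varepsilon(\rho,\xi)$ depends on $\xi$ only through the combination $\partial_z + i\xi$ (appearing in $k^2(\partial_z+i\xi)(c+\beta k^2(\partial_z+i\xi)^2-2\eta)$ and in $(\g+\rho^2)(\partial_z+i\xi)^{-1}$), and that complex conjugation and the reflection $z\mapsto -z$ each send $\partial_z+i\xi$ to (plus or minus) $\partial_z - i\xi$, i.e.\ they flip the sign of $\xi$. The real function $\eta(z)$ is even, so $\eta(-z)=\eta(z)$ and $\overline{\eta(z)}=\eta(z)$ cause no trouble.

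First I would record the conjugation identities. With $\mathcal{R}_r,\mathcal{R}_i,\mathcal{R}_o$ as in \eqref{e:ref}, one computes, for $\psi$ a $2\pi$-periodic function, that
\begin{align*}
(\mathcal{Q}_\varepsilon(\rho,\xi)\,\mathcal{R}_r\psi)(z) &= (\mathcal{R}_r\,\mathcal{Q}_\varepsilon(\rho,-\xi)\psi)(z),\\
(\mathcal{Q}_\varepsilon(\rho,\xi)\,\mathcal{R}_i\psi)(z) &= -(\mathcal{R}_i\,\mathcal{Q}_\varepsilon(\rho,\xi)\psi)(z),\\
(\mathcal{Q}_\varepsilon(\rho,\xi)\,\mathcal{R}_o\psi)(z) &= -(\mathcal{R}_o\,\mathcal{Q}_\varepsilon(\rho,-\xi)\psi)(z).
\end{align*}
The first follows because $\overline{(\partial_z+i\xi)\psi} = (\partial_z - i\xi)\overline{\psi}$, so conjugating turns the $\xi$-symbol into the $(-\xi)$-symbol while the real even coefficient $\eta$ and the real constants $c,\beta,\g,\rho^2$ are untouched; the sign is $+$ because each factor $(\partial_z+i\xi)$ is conjugated an appropriate number of times and the inverse operator behaves the same way. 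The second follows from $(\partial_z+i\xi)\big(\overline{\psi(-z)}\big) = -\,\big((\partial_z-i\xi)\overline\psi\big)(-z)$ together with $\eta(-z)=\eta(z)$: the extra reflection $z\mapsto -z$ inside produces a factor $-1$ from the odd-order part (the leading operator and the $\partial_z^{-1}$ term are odd in $\partial_z$), but it also flips $\xi\mapsto -\xi$ a second time from the reflection, so the net effect is a sign change with the \emph{same} $\xi$. The third is the composition of the mechanisms of the first two without conjugation: $z\mapsto -z$ flips $\xi$ and contributes an overall $-1$. Rewriting these three identities as the statements ``$\mathcal{Q}_\varepsilon(\rho,-\xi)$ commutes with $\mathcal{R}_r$'', ``$\mathcal{Q}_\varepsilon(\rho,\xi)$ anti-commutes with $\mathcal{R}_i$'', ``$\mathcal{Q}_\varepsilon(\rho,-\xi)$ anti-commutes with $\mathcal{R}_o$'' gives items (1)--(3).

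For the spectral symmetries (4) and (5), I would argue exactly as in Proposition~\ref{p:1}: if $\mathcal{Q}_\varepsilon(\rho,\xi)\varphi = \lambda\varphi$ then applying the intertwining relations gives $\mathcal{Q}_\varepsilon(\rho,\xi)\mathcal{R}_i\varphi = -\overline{\lambda}\,\mathcal{R}_i\varphi$, so $-\overline{\lambda}\in\operatorname{spec}(\mathcal{Q}_\varepsilon(\rho,\xi))$, which is the reflection through the imaginary axis — this is (4). Likewise $\mathcal{Q}_\varepsilon(\rho,-\xi)\mathcal{R}_r\varphi = \overline{\lambda}\,\mathcal{R}_r\varphi$ and $\mathcal{Q}_\varepsilon(\rho,-\xi)\mathcal{R}_o\varphi = -\lambda\,\mathcal{R}_o\varphi$, so the spectrum of $\mathcal{Q}_\varepsilon(\rho,-\xi)$ is invariant under $\lambda\mapsto\overline\lambda$ and $\lambda\mapsto -\lambda$ — this is (5). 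Since these are all eigenvalue statements and the $L^2(\mathbb{T})$-spectrum consists of eigenvalues of finite multiplicity, nothing more is needed.

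The main obstacle is purely bookkeeping: getting the signs right in the conjugation identities, in particular correctly accounting for the fact that $\mathcal{R}_i$ and $\mathcal{R}_o$ involve the reflection $z\mapsto-z$ which acts on $\xi$ \emph{as well as} on the derivative, so that one must distinguish ``$\xi$'' from ``$-\xi$'' in the conclusions — this is exactly the asymmetry, noted just before the proposition, that breaks the real-axis and origin symmetries of $\operatorname{spec}_{L^2(\mathbb{T})}(\mathcal{Q}_\varepsilon(\rho,\xi))$ itself while preserving them for the $\xi\mapsto-\xi$ conjugate operator. Care is also needed with the operator $(\partial_z+i\xi)^{-1}$, which is well defined on $L^2(\mathbb{T})$ precisely when $\xi\notin\Z$, in particular for $\xi\in(-1/2,1/2]\setminus\{0\}$, so the computations take place on a fixed domain without any subtlety about mean-zero subspaces.
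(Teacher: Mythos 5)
Your proposal is correct and follows essentially the same route as the paper, which simply declares the proof ``similar to Proposition~\ref{p:1}''; your added bookkeeping of how $(\partial_z+i\xi)$ transforms under conjugation and under $z\mapsto -z$ (flipping $\xi$ and/or producing a sign per odd-order factor, with $\eta$ real and even) is exactly the content needed, and the resulting intertwining identities and eigenvalue consequences are right. The only caveat is inherited from the proposition's own loose wording: what the identities literally give is $\operatorname{spec}\mathcal Q_\varepsilon(\rho,-\xi)=\overline{\operatorname{spec}\mathcal Q_\varepsilon(\rho,\xi)}=-\operatorname{spec}\mathcal Q_\varepsilon(\rho,\xi)$, i.e.\ a symmetry exchanging the spectra at $\xi$ and $-\xi$ (which is what justifies restricting to $\xi\in(0,1/2]$), rather than an invariance of $\operatorname{spec}\mathcal Q_\varepsilon(\rho,-\xi)$ by itself.
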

\begin{proof}
Proof is similar to the Proposition \ref{p:1}. 
\end{proof}
As a consequence of the properties mentioned in Proposition \ref{p:02}, it is sufficient to take $\xi\in(0,1/2]$. Therefore, we arrive at pseudo-differential spectral problem
\begin{equation}\label{e:non}
    \mathcal Q_{\varepsilon}(\rho,\xi)\varphi=\lambda\varphi;\quad\varphi\in L^2(\T)\quad\text{and}\quad\xi\in(0,1/2]
\end{equation}
In case of non-periodic perturbations, we need to investigate if there exist any $\lambda\in\operatorname{spec}_{L^2(\T)}(\mathcal Q_{\varepsilon}(\rho,\xi))$ with $\Re(\lambda)>0$ for some $\rho\neq0$ and $\xi\in(0,1/2]$.
\subsection{Characterization of the potentially unstable spectrum}
\subsubsection{\underline{Periodic perturbations}}
Here, we check if there exist any $\lambda\in\operatorname{spec}_{L_0^2(\T)}(\mathcal Q_{\varepsilon}(\rho))$ with $\Re(\lambda)>0$ for some $\rho\neq0$.
As discussed in Section~\ref{s:per}, $\operatorname{spec}_{L_0^2(\T)}(\mathcal Q_{\varepsilon}(\rho))$ inherits quadrafold symmetry that means an eigenvalue of $\mathcal Q_{\varepsilon}(\rho)$ away from the imaginary axis guarantees an eigenvalue with positive real part. The spectral analysis for the operator $\mathcal Q_\varepsilon(\rho)$ is based on perturbation
arguments in which we consider $\mathcal Q_\varepsilon(\rho)$ as a perturbation of the operator $\mathcal Q_0(\rho)$ for $|\varepsilon|$ sufficiently small. More precisely, for any $d>0$, there exist $\delta>0$ such that for any $\varepsilon$ with $\|\varepsilon\|\leq\delta$, the spectrum of $\operatorname{spec}_{L_0^2(\T)}(\mathcal Q_{\varepsilon}(\rho))$ satisfies \[\operatorname{spec}_{L_0^2(\T)}(\mathcal Q_{\varepsilon}(\rho))\subset\{\lambda\in\C; \operatorname{dist(\lambda,\operatorname{spec}_{L_0^2(\T)}(\mathcal Q_{0}(\rho)))}<d\}\]
To find the spectrum of $\mathcal{Q}_\varepsilon(\rho)$, we require to find the spectrum of $\mathcal{Q}_0(\rho)$. Since $\operatorname{spec}_{L_0^2(\T)}(\mathcal Q_{0}(\rho))$ is a differential operator with periodic coefficients, a straightforward Fourier analysis allows to compute its spectrum explicitly as
\begin{align}\label{E:spec}
    \mathcal Q_{0}(\rho)e^{inz} = i\Omega_{n,\rho}e^{inz}\quad \text{for all}\quad n \in \mathbb{Z}\setminus \{0\}.
\end{align}
where \begin{align}\label{E:oomega}
    \Omega_{n,\rho} = \g\left(n-\dfrac{1}{n}\right)+\b k^4(n-n^3)-\dfrac{\rho^2}{n}
\end{align}
Therefore, the $L_0^2(\T)$-spectrum of $\mathcal Q_0(\rho)$ is given by
\begin{equation}\label{e:spec}
    \operatorname{spec}_{L^2_0(\mathbb{T})}(\mathcal Q_0(\rho))=\{i\Omega_{n,\rho}; n \in \Z^\ast \},
\end{equation}
which implies $\operatorname{spec}_{L^2_0(\mathbb{T})}(\mathcal Q_0(\gamma))$ contains purely imaginary eigenvalues with finite algebraic multiplicity, which has to be like this since $\varepsilon=0$ represents the zero solution, which is trivially stable. The eigenvalues in \eqref{e:spec} shift around and may depart from the imaginary axis when $|\varepsilon|$ rises, resulting in spectral instability. Since the spectrum around the real and imaginary axes is symmetric, it follows that for sufficiently small values of $|\varepsilon|$, eigenvalues of $\mathcal Q_\varepsilon(\rho)$ must bifurcate in pairs as a result of collisions of eigenvalues of $\mathcal Q_0(\rho)$ on the imaginary axis.

Let $n\neq m\in \Z^\ast$, a pair of eigenvalues $i\Omega_{n,\rho}$ and $i\Omega_{m,\rho}$ of $\mathcal Q_0(\rho)$ collide for some $\rho=\rho_c$ when 
\begin{align}\label{e:coll}
    \Omega_{n,\rho_c}=\Omega_{m,\rho_c}.
\end{align}
Without any loss of generality, consider $n<m$ and $m=n+\Theta$ with $\Theta\in\N$ in collision condition \eqref{e:coll}, we arrive at
\begin{equation}\label{e:ccc1}
    \rho^2=-\g \mathscr{J}(n,\Theta)+\beta k^4 \mathscr{U}(n,\Theta)
\end{equation}
where,
\[\mathscr{J}(n,\Theta)=n(n+\Theta)+1\quad\text{and}\quad \mathscr{U}(n,\Theta)=3n^2(n+\Theta)^2+n(n+\Theta)(\Theta^2-1).\]
Analyzing signs of $\mathscr{J}(n,\Theta)$ and $\mathscr{U}(n,\Theta)$ leads us to the following result.

\begin{lemma}\label{l:c1} For a fixed $\gamma>0$, $\beta\in\R$ and each $\Theta\in\mathbb{N}$, eigenvalues $\Omega_{n,\rho}$ and $\Omega_{n+\Theta,\rho}$ of the operator $\mathcal{Q}_0(\rho)$ collide for all $n\in\Z^\ast$ when $\beta>0$, and all $n\in(-\Theta,0)$ when $\beta\leq0$. For $\beta>0$, all collisions occur in an interval $(k^\ast, \infty)$ if $n\in(-\infty,-\Theta)\cup(0,\infty)$, in $(0, k^\ast)$ if $n\in(-\Theta,0)$, and for all $ k>0$ if $n\in (-2,0)$ where $k^\ast = (\g \mathscr{J}(n,\Theta)/\b \mathscr{U}(n,\Theta))^{1/4}$.
For $\beta\leq0$, all collisions occur for all $ k>0$. All collisions take place away from the origin in the complex plane except for the pair $\Omega_{n,\rho}$ and $\Omega_{-n,\rho}$. 
\end{lemma}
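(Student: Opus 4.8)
The plan is to reduce the claimed statement to an analysis of the explicit formula \eqref{e:ccc1}, namely $\rho^2 = -\gamma\mathscr{J}(n,\Theta) + \beta k^4\mathscr{U}(n,\Theta)$, which was derived by substituting $m = n+\Theta$ into the collision condition $\Omega_{n,\rho_c} = \Omega_{m,\rho_c}$. Since a genuine collision requires $\rho^2 \geq 0$ (and $\rho = \rho_c \in \R$), the whole lemma is a matter of determining, for each sign of $\beta$, the set of pairs $(n,\Theta)$ and wavenumbers $k$ for which the right-hand side is nonnegative. So the first step is to record the two auxiliary factorizations
\[
\mathscr{J}(n,\Theta) = \Bigl(n+\tfrac{\Theta}{2}\Bigr)^2 - \tfrac{\Theta^2-4}{4}, \qquad \mathscr{U}(n,\Theta) = \mathscr{U}_1(n,\Theta)\,\mathscr{U}_2(n,\Theta),
\]
with $\mathscr{U}_1(n,\Theta) = (n+\tfrac{\Theta}{2})^2 - \tfrac{\Theta^2}{4} = n(n+\Theta)$ and $\mathscr{U}_2(n,\Theta) = 3(n+\tfrac{\Theta}{2})^2 + \tfrac{\Theta^2}{4} - 1$.

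Next I would carry out the sign analysis. Viewing $n$ as a continuous variable, $\mathscr{U}_1$ vanishes exactly at $n = 0$ and $n = -\Theta$, is negative strictly between them and positive outside; $\mathscr{U}_2 > 0$ for all integers $n$ and all $\Theta \in \N$ except the single degenerate case $\mathscr{U}_2(-1,2) = 0$. Hence $\mathscr{U}(n,\Theta) > 0$ for $n \in (-\infty,-\Theta)\cup(0,\infty)$, $\mathscr{U}(n,\Theta) < 0$ for $n \in (-\Theta,0)$ with $\Theta \geq 3$, and $\mathscr{U}(-1,2) = 0$. A parallel computation shows $\mathscr{J}(n,1) > 0$ and $\mathscr{J}(n,2) \geq 0$ for all $n$, while for $\Theta \geq 3$ the roots of $\mathscr{J}$ lie in $(-\Theta,-\Theta+\tfrac12)$ and $(-\tfrac12,0)$, so among integers $\mathscr{J}(n,\Theta) > 0$ for $n \notin \{-\Theta+1,\dots,-1\}$ (i.e. $n \in (-\infty,-\Theta)\cup(0,\infty) \cap \Z$) and $\mathscr{J}(n,\Theta) < 0$ for $n \in (-\Theta,0)\cap\Z$ with $\Theta \geq 3$, with $\mathscr{J}(-1,2) = 0$. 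The upshot is that $\mathscr{J}$ and $\mathscr{U}$ always carry the same sign on the integers.

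With these signs in hand the case split is mechanical. When $\beta > 0$: for $n \in (-\infty,-\Theta)\cup(0,\infty)$ both $\mathscr{J},\mathscr{U} > 0$, so $\rho^2 \geq 0$ iff $\beta k^4\mathscr{U} \geq \gamma\mathscr{J}$, i.e. iff $k \geq k^\ast := (\gamma\mathscr{J}(n,\Theta)/\beta\mathscr{U}(n,\Theta))^{1/4}$, giving the interval $(k^\ast,\infty)$ (one must note $\rho_c \neq 0$ strictly, so the endpoint is excluded unless $\mathscr{J} = 0$); for $n \in (-\Theta,0)$ with $\Theta \geq 3$ both are negative, so $-\gamma\mathscr{J} > 0$ and $\beta k^4\mathscr{U} < 0$, yielding $\rho^2 \geq 0$ iff $k \leq k^\ast$, the interval $(0,k^\ast)$; and the remaining case $n \in (-2,0)$ (only $n = -1$, $\Theta \in \{1,2\}$) has $\mathscr{J},\mathscr{U} = 0$ or $\mathscr{J} = 1,\mathscr{U} = 3$ both $\geq 0$... here I should be careful: for $\Theta = 1$, $n = -1$ gives $\mathscr{J}(-1,1) = 1 > 0$, $\mathscr{U}(-1,1) = 3 - 0 = 3 > 0$ wait — $\mathscr{U}_1(-1,1) = (-1)(0) = 0$, so $\mathscr{U}(-1,1) = 0$, hence $\rho^2 = -\gamma < 0$, no collision; for $\Theta = 2$, $n = -1$ gives $\mathscr{J} = \mathscr{U} = 0$, so $\rho^2 = 0$ for all $k$, a collision at the origin. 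So the ``$n \in (-2,0)$, all $k$'' clause really refers to the $\Theta$-even, $n = -\Theta/2 = -1$ situation (and more generally $n = -\Theta/2$), which I would handle by the observation $\mathscr{J}(-\Theta/2,\Theta) = 1 - \Theta^2/4 + \cdots$; in fact when $\Theta$ is even and $n = -\Theta/2$ one computes $\Omega_{n,\rho} = -\Omega_{-n,\rho}$, so the pair $\Omega_{n,\rho},\Omega_{-n,\rho}$ collides precisely at the origin, for every $k$ and every $\rho$ — this is the ``except for the pair $\Omega_{n,\rho}$ and $\Omega_{-n,\rho}$'' clause. When $\beta \leq 0$: $\beta k^4\mathscr{U} \geq 0$ forces $\mathscr{U} \leq 0$, hence $n \in (-\Theta,0)$ (where also $\mathscr{J} < 0$ for $\Theta \geq 3$, $\mathscr{J} \geq 0$ only in the degenerate $\Theta \leq 2$ cases), and then $-\gamma\mathscr{J} \geq 0$ and $\beta k^4\mathscr{U} \geq 0$ so $\rho^2 \geq 0$ automatically, for all $k > 0$. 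Finally the ``collisions occur away from the origin except for $\Omega_{n,\rho}$ and $\Omega_{-n,\rho}$'' claim follows from \eqref{E:oomega}: a collision at the origin needs $\Omega_{n,\rho} = \Omega_{m,\rho} = 0$, and from the structure of $\Omega_{n,\rho} = \gamma(n - 1/n) + \beta k^4(n - n^3) - \rho^2/n$ one checks that two distinct indices can give the same value $0$ only when $m = -n$.

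The main obstacle is not any single estimate but keeping the bookkeeping honest at the boundary cases: the distinction between ``$\rho_c = 0$'' (collision at the origin, which the lemma wants to flag separately) and ``$\rho_c \neq 0$'' (genuine transverse collision), the degenerate pair $(n,\Theta) = (-1,2)$ where $\mathscr{J} = \mathscr{U} = 0$, and the difference between treating $n$ as a real variable (to locate the roots of the quadratics $\mathscr{J},\mathscr{U}$) versus restricting to integers (where the open interval $(-\Theta,0)$ contains $\Theta - 1$ integers). I would therefore state the real-variable root locations as a sublemma, then immediately intersect with $\Z^\ast$, and only afterwards feed the signs into \eqref{e:ccc1}; this isolates the one genuinely non-obvious point, namely the sharp inclusions $-\Theta < -\tfrac{\Theta+\sqrt{\Theta^2-4}}{2} < -\Theta + \tfrac12$ and $-\tfrac12 < -\tfrac{\Theta-\sqrt{\Theta^2-4}}{2} < 0$ for $\Theta \geq 3$, which guarantee that the roots of $\mathscr{J}$ never trap an extra integer beyond those already in $(-\Theta,0)$.
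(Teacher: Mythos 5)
Your proposal follows essentially the same route as the paper's proof: the same factorizations of $\mathscr{J}$ and $\mathscr{U}$, the same sign analysis of the quadratics over the integers, the same case split on the sign of $\beta$ to decide when $-\gamma\mathscr{J}(n,\Theta)+\beta k^4\mathscr{U}(n,\Theta)>0$, and the same identification of the origin collisions with the pairs $\{n,-n\}$. The only slip is the aside that the pair $\Omega_{n,\rho},\Omega_{-n,\rho}$ collides ``for every $k$ and every $\rho$'': since $\Omega_{-n,\rho}=-\Omega_{n,\rho}$, these two coincide only at the particular $\rho_c$ for which $\Omega_{n,\rho_c}=0$, namely $\rho_c^2=(n^2-1)(\gamma-\beta k^4 n^2)$, which is exactly the computation the paper records.
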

\begin{proof}
Note that
\begin{equation}
    \mathscr{J}(n,\Theta)=\left(n+\dfrac{\Theta}{2}\right)^2-\dfrac{\Theta^2-4}{4}.
\end{equation}
For a fixed $\Theta\in\mathbb{N}$, $\mathscr{J}(n_1,\Theta)=\mathscr{J}(n_2,\Theta)=0$ where \[n_1=-\frac{\Theta+\sqrt{\Theta^2-4}}{2}\quad \text{and}\quad n_2=-\frac{\Theta-\sqrt{\Theta^2-4}}{2}.\]
Note that $n_1, n_2$ are purely complex for $\Theta=1$, real and equal for $\Theta=2$, and real and distinct for $\Theta\geq 3$. Moreover, for $\Theta\geq 3$, $-\Theta<n_1<-\Theta+1/2$, and $-1/2<n_2<0$. Combining these, we have, $\mathscr{J}(n,\Theta)>0$ when $n\in(-\infty,-\Theta)\cup(0,\infty)$ for all $\Theta\in\N$, $\mathscr{J}(-1,2)=0$, and $\mathscr{J}(n,\Theta)<0$ when $n\in(-\Theta,0)$ for all $\Theta\geq3$.

Now, rewriting $\mathscr{U}(n,\Theta)$, we obtain
\begin{align*}
    \mathscr{U}(n,\Theta)=&3\left(n(n+\Theta)+\dfrac{\Theta^2-1}{6}\right)^2-\dfrac{(\Theta^2-1)^2}{12}\\
    =&\left(\left(n+\dfrac{\Theta}{2}\right)^2-\dfrac{\Theta^2}{4}\right)\left(3\left(n+\dfrac{\Theta}{2}\right)^2+\dfrac{\Theta^2}{4}-1 \right)\\
    =&:\mathscr{U}_1(n,\Theta)\mathscr{U}_2(n,\Theta).
\end{align*}
A root analysis similar to $\mathscr{J}(n,\Theta)$ on $\mathscr{U}_1(n,\Theta)$ and $\mathscr{U}_2(n,\Theta)$ provides that
$\mathscr{U}(n,\Theta)>0$ when $n\in(-\infty,-\Theta)\cup(0,\infty)$ for all $\Theta\in\N$, $ \mathscr{U}(-1,2)=0$, and $\mathscr{U}(n,\Theta)<0$ when $n\in(-\Theta,0)$ for all $\Theta\geq3$.
    
Note that, there exists a $\rho\in\R$ satisfying the collision condition \eqref{e:ccc1} if 
\begin{equation}\label{e:ccc5}
 X(n,\Theta):=-\g \mathscr{J}(n,\Theta)+\beta k^4 \mathscr{U}(n,\Theta)>0.
\end{equation}
We know that $\mathscr{J}(n,\Theta), \mathscr{U}(n,\Theta)>0$ when $n\in(-\infty,-\Theta)\cup(0,\infty)$ for all $\Theta\in\N$. The condition in \eqref{e:ccc5} does not hold for any $n\in(-\infty,-\Theta)\cup(0,\infty)$ when $\b<0$. For $\b>0$, \eqref{e:ccc5} holds for $n\in(-\infty,-\Theta)\cup(0,\infty)$ only when $k>k^\ast$. Since $\mathscr{J}(n,\Theta), \mathscr{U}(n,\Theta)\leq0$ when $n\in(-\Theta,0)$, there exist $\rho\in\R$ satisfying the collision condition \eqref{e:ccc1} for all $k>0$ when $\b\leq0$; and for all $k<k^\ast$ when $\b>0$. For instance, see Figures \ref{fig:1} and \ref{fig:2}. In Figure~\ref{fig:1} (A), we fix $\b=-1$, $\g=k=1$, $\Theta=4$,  then $X(n,\Theta)>0$ for $n\in(-4,0)$ and $X(n,\Theta)<0$ otherwise. In Figure~\ref{fig:1} (B) for $\b=\g=k=1$, $\Theta=4$, $X(n,\Theta)>0$ for all $n\in(-10,-5)$. In Figure~\ref{fig:2} (A), for $\b=\g=1$, $\Theta=3$, $n=-2$; \eqref{e:ccc5} holds for $k\in(0,0.707)$ and does not hold otherwise. Also, in Figure~\ref{fig:2} (B), for $\b=\g=1$, $\Theta=3$, $n=-5$; \eqref{e:ccc5} holds for $k\in(0.412,\infty)$ and does not hold otherwise.
Observe that, $\Omega_{n,\rho_c}=\Omega_{-n,\rho_c}=0$ for $\rho_c^2=(n^2-1)|\g-\b k^4n^2|$. Therefore, $\Omega_{n,\rho_c}$ and $\Omega_{n+\Theta,\rho_c}$ collide at the origin when $\Theta$ is even and $n=-\Theta/2$. All other collisions are away from origin.
\end{proof}

\begin{figure}%
    \centering
    
    \subfloat[\centering $X(n,4)$ with $n:-5$ to $1$, $\b=-1$, $\g=k=1$]{{\includegraphics[width=7cm]{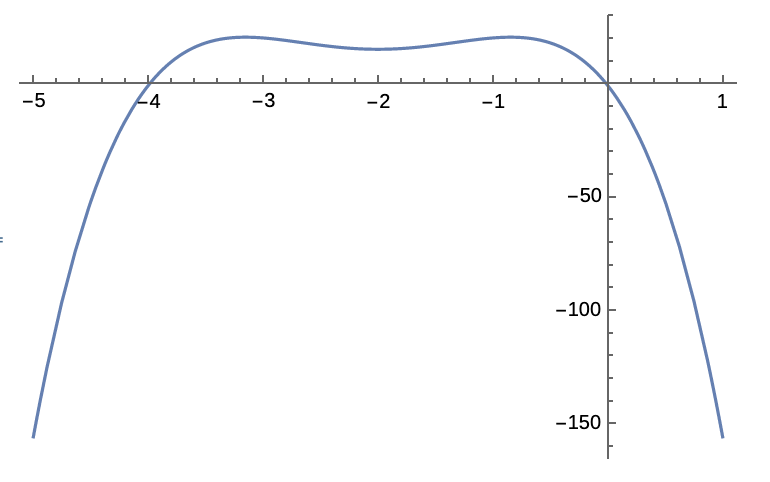} }}
    \qquad
    \subfloat[\centering $X(n,4)$ with $n:-10$ to $-5$, $\b=\g=k=1$ ]{{\includegraphics[width=7cm]{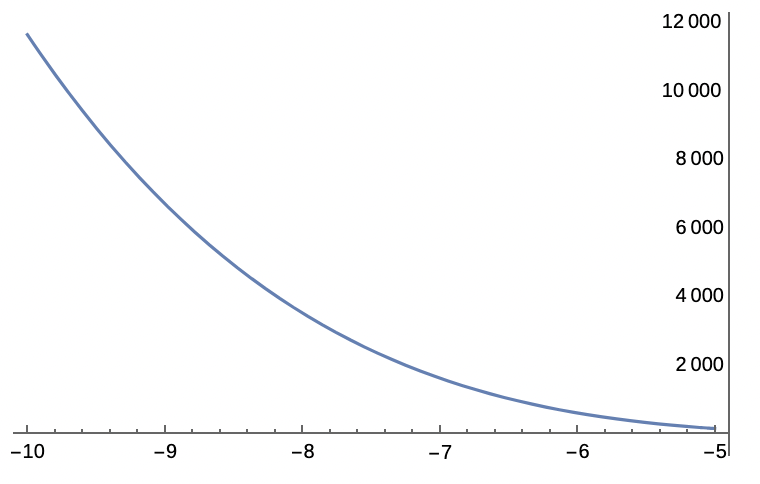} }}
    \caption{Graph of function $X(n,\Theta)$ vs. $n$ for $\Theta=4$}\label{fig:1}%
    \centering
    \subfloat[\centering $X(-2,3)$ with $k\in(0,0.9)$, $\g=\b=1$]{{\includegraphics[width=7cm]{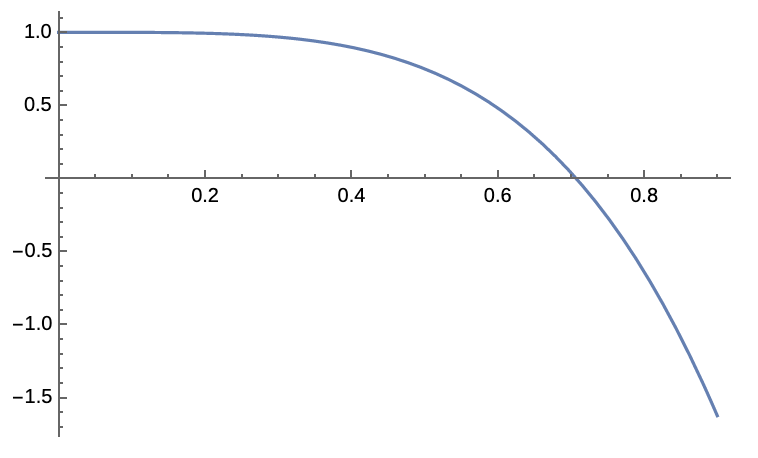} }}
    \qquad
    \subfloat[\centering $X(-5,3)$ with $k\in(0,0.5)$, $\g=\b=1$]{{\includegraphics[width=7cm]{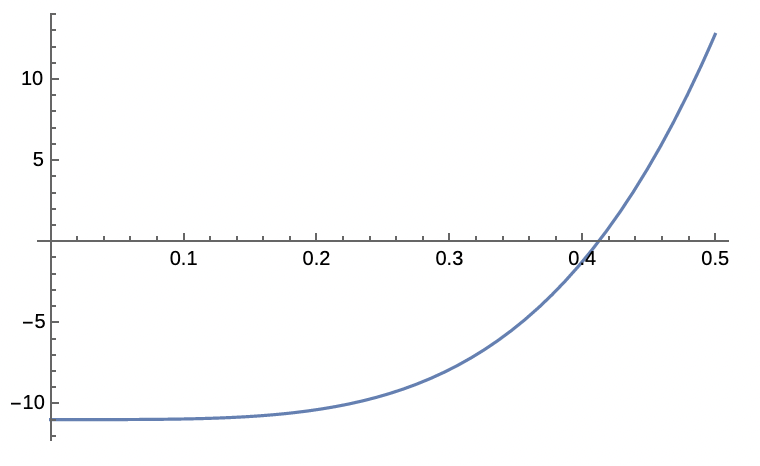} }}
    \caption{Graph of function $X(n,\Theta)$ vs. $k$ for $n=-2,-5$ and $\Theta=3$}%
    \label{fig:2}%
\end{figure}


For $|\varepsilon|$ sufficiently small, $\operatorname{spec}_{L_0^2(\T)}(\mathcal{Q}_\varepsilon(\rho))$ contain eigenvalues near to the eigenvalues of $\operatorname{spec}_{L_0^2(\T)}(\mathcal{Q}_0(\rho))$ which depend continuously upon $\varepsilon$.
The opposite sign of the {\em Krein signatures} of the colliding non-zero eigenvalues is a prerequisite for instability \cite{MacKay1986STABILITYWAVES}. The linear operator $\mathcal{Q}_\varepsilon(\rho)$ can be decomposed into the product of a skew-adjoint and a self-adjoint operator as follows \[
\mathcal{Q}_{\varepsilon}(\rho) = \mathcal{A} \B_{\varepsilon,\rho}
\]
where $\A=\partial_z$ is skew-adjoint and
\[
 \B_{\varepsilon,\rho}=k^2(c+\beta k^2\partial_z^2-2\eta)+(\g+\rho^2)\partial_z^{-2}.
\]
is self-adjoint.

An eigenvalue $\lambda\in i\R\setminus\{0\}$ has negative Krein
signature if $\left<\phi,\mathcal{Q}_\varepsilon(\rho)\phi\right><0$, where $\phi$ is the corresponding eigenfunction, and it has
positive Krein signature if 
$\left<\phi,\mathcal{Q}_\varepsilon(\rho)\phi\right>>0$. In particular, for sufficiently small $\varepsilon$, the Krein signature $\chi_{n,\rho}$ of eigenvalues $i\Omega_{n,\rho}$ in \eqref{E:oomega} of $\mathcal{Q}_{0}(\rho)$ is given by
\begin{align}\label{eq:krein}
    \chi_{n,\rho} = \operatorname{sgn}(\left<\B_{0,\rho} e^{inz}, e^{inz}\right>)=\operatorname{sgn}\left( \frac{1}{n}\Omega_{n,\rho}\right).
\end{align}
Here, $\operatorname{sgn}$ is the signum function which establishes the sign of a real number. If the collision condition \eqref{e:coll} is satisfied for some $n,m\in \Z^\ast$, then \eqref{eq:krein} states that at the collision, the non-zero eigenvalues $i\Omega_{n,\rho}$ and $i\Omega_{m,\rho}$ have opposite Krein signatures if
\begin{align}\label{eq:opp}
    n m<0
\end{align}
otherwise they have same Krein signatures at the collision. We rule out several collisions indicated in Lemma~\ref{l:c1} that won't cause transverse instability, using \eqref{eq:opp}.
\begin{lemma}[Potentially unstable nodes]\label{l:k1}Out of all the collisions mentioned in Lemma \ref{l:c1},
\begin{enumerate}
     \item For $\rho=0$, $\b\in\R$, $\{n,n+\Theta\}=\{-1,1\}$, are potentially unstable with respect to long wavelength transverse perturbations.
     \item For all $\b\in\R$; $\{n,n+\Theta\}$, $\rho\neq0$ with $n\in(-\Theta,0)$, $\Theta\geq3$ are potentially unstable with respect to finite or short wavelength transverse perturbations.
     
 \end{enumerate}
\end{lemma}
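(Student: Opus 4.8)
The plan is to run the collisions catalogued in Lemma~\ref{l:c1} through the Krein-signature exclusion principle \cite{MacKay1986STABILITYWAVES}. By the symmetry of $\operatorname{spec}_{L^2_0(\T)}(\mathcal Q_\varepsilon(\rho))$ under reflection across the real axis (Proposition~\ref{p:1}), any eigenvalue that departs from the imaginary axis as $|\varepsilon|$ grows is accompanied by one with $\Re(\lambda)>0$; a necessary condition for a collision of the purely imaginary eigenvalues $i\Omega_{n,\rho}$ and $i\Omega_{m,\rho}$ at $\rho=\rho_c$ to break open in this way is that their Krein signatures differ, provided the common value $\Omega_c:=\Omega_{n,\rho_c}=\Omega_{m,\rho_c}$ is nonzero; a collision at the origin ($\Omega_c=0$) is inconclusive for this criterion and is therefore never excluded by it. So the task is simply to test this necessary condition collision by collision.

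First I would invoke the signature formula \eqref{eq:krein}: the signature of $i\Omega_{n,\rho}$ equals $\operatorname{sgn}(\Omega_{n,\rho}/n)$, so away from the origin the two colliding signatures are $\operatorname{sgn}(\Omega_c/n)$ and $\operatorname{sgn}(\Omega_c/m)$, which disagree exactly when $nm<0$, i.e.\ (with $m=n+\Theta$, $\Theta\in\N$) when $-\Theta<n<0$; this is precisely \eqref{eq:opp}. Feeding in Lemma~\ref{l:c1}: for $\beta\leq0$ every collision already has $n\in(-\Theta,0)$, so none is excluded on signature grounds; for $\beta>0$ the collisions with $n\in(-\infty,-\Theta)\cup(0,\infty)$ have $n$ and $m$ of the same sign, hence $nm>0$, equal Krein signatures, and are discarded, while those with $n\in(-\Theta,0)$ survive. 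Thus in every case only collisions with $-\Theta<n<0$ remain — a class that already contains all the origin collisions, namely the pairs $\{n,-n\}$ with $\Theta$ even and $n=-\Theta/2$, by the last sentence of Lemma~\ref{l:c1}.

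Finally I would sort the survivors by the transverse wavenumber at which they occur, using \eqref{e:ccc1}, $\rho^2=-\gamma\mathscr{J}(n,\Theta)+\beta k^4\mathscr{U}(n,\Theta)$. The pair $(n,\Theta)=(-1,2)$ is distinguished because $\mathscr{J}(-1,2)=\mathscr{U}(-1,2)=0$, which forces $\rho=0$ for all $k>0$ and all $\beta\in\R$; this is the origin collision of $\Omega_{-1,\rho}$ with $\Omega_{1,\rho}$, not excluded by the signature criterion, and it gives node~(1), potentially unstable with respect to long-wavelength transverse perturbations. Every other surviving collision has $-\Theta<n<0$ with $\Theta\geq3$ (since $\Theta=1$ admits no integer $n\in(-1,0)$ and $\Theta=2$ admits only $n=-1$, already handled), and for such $(n,\Theta)$ the value of $\rho$ in \eqref{e:ccc1} is nonzero — automatically when $\beta\leq0$, and when $\beta>0$ on the relevant range $0<k<k^\ast$; these give node~(2), potentially unstable with respect to finite or short-wavelength transverse perturbations, the subcase $\Theta$ even and $n=-\Theta/2$ being an origin collision that the signature criterion again leaves in play.

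The delicate point is the bookkeeping around the origin collisions: there \eqref{eq:krein} returns $\operatorname{sgn}(0)$, so one must argue from the dichotomy ``opposite signatures \emph{or} collision at the origin'' rather than from \eqref{eq:opp} alone, and one must keep straight that $(n,\Theta)=(-1,2)$ with $\rho=0$ is the long-wavelength node whereas the $\{n,-n\}$ origin collisions with $|n|\geq2$ sit at $\rho=\rho_c\neq0$ and hence belong to the finite-wavelength list. I would also stress that this lemma only isolates \emph{candidates}: whether a surviving node genuinely spawns eigenvalues with positive real part, and for which $k$, is settled by the leading-order-in-$\varepsilon$ perturbation computation of the colliding eigenvalues performed in the subsequent subsections.
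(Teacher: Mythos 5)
Your proposal is correct and follows exactly the route the paper intends: the paper gives no explicit proof of Lemma~\ref{l:k1}, treating it as an immediate consequence of applying the Krein-signature criterion \eqref{eq:opp} to the collisions of Lemma~\ref{l:c1} and then separating the $\rho=0$ node $\{-1,1\}$ (where $\mathscr{J}(-1,2)=\mathscr{U}(-1,2)=0$ forces $\rho=0$) from the $\rho\neq0$ nodes with $n\in(-\Theta,0)$, $\Theta\geq3$. Your additional care with the origin collisions, where \eqref{eq:krein} degenerates, is a sound reading consistent with the paper's restriction of the signature test to non-zero eigenvalues.
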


\subsubsection{\underline{Localized or bounded perturbations}}
As a consequence of the symmetry of the spectrum derived
in Proposition \ref{p:02}, we obtain instability if there is an eigenvalue of $\mathcal{Q}_\varepsilon(\rho,\xi)$ off the imaginary axis. We regard $\mathcal{Q}_\varepsilon(\rho,\xi)$ as a perturbation of the operator $\mathcal{Q}_0(\rho,\xi)$. Consider
\[\mathcal{Q}^\ast_\varepsilon(\rho,\xi)=\mathcal{Q}_\varepsilon(\rho,\xi)-\mathcal{Q}_0(\rho,\xi)
\]
This operator is compact in $L^2(\T)$,  with $\|\mathcal{Q}^\ast_\varepsilon(\rho,\xi)\|\to 0$ as $\varepsilon\to 0$ in the operator norm. Note that this estimate is uniform for $\xi\in(0,1/2]$. Therefore, spectra of $\mathcal{Q}_\varepsilon(\rho,\xi)$ and $\mathcal{Q}_0(\rho,\xi)$ remain close for sufficiently small $\varepsilon$.
To find the spectrum of  $\mathcal{Q}_\varepsilon(\rho,\xi)$, we require to find the spectrum of $\mathcal{Q}_0(\rho,\xi)$. A straightforward calculation reveals that 
\begin{align}\label{e:spec123}
    \mathcal Q_{0}(\rho,\xi)e^{inz} = i\Omega_{n,\rho,\xi}e^{inz}\quad \text{for all}\quad n \in \mathbb{Z}.
\end{align}
where \begin{align}\label{E:omega1}
    \Omega_{n,\rho,\xi} = \g\left((n+\xi)-\dfrac{1}{(n+\xi)}\right)+\b k^4((n+\xi)-(n+\xi)^3)-\dfrac{\rho^2}{(n+\xi)}
\end{align}
Therefore, the $L^2(\T)$-spectrum of $\mathcal Q_0(\rho,\xi), \xi\in(0,1/2]$ is given by
\begin{equation}\label{e:spec1}
    \operatorname{spec}_{L^2(\mathbb{T})}(\mathcal Q_0(\rho,\xi))=\{i\Omega_{n,\rho,\xi}; n \in \Z\}.
\end{equation}
When eigenvalues of $\mathcal Q_\varepsilon(\rho,\xi)$ bifurcate from the imaginary axis for $|\varepsilon|$ sufficiently small, according to the symmetry of the spectrum around the real and imaginary axes, they must do so in pairs as a result of collisions of eigenvalues of $\mathcal Q_0(\rho,\xi)$ on the imaginary axis.
Let $n\neq m\in \Z$, a pair of eigenvalues $i\Omega_{n,\rho,\xi}$ and $i\Omega_{m,\rho,\xi}$ of $\mathcal Q_0(\rho,\xi)$ collide for some $\rho=\rho_c$ and $\xi\in\left(0,1/2\right]$ when 
\begin{align}\label{e:colll3}
    \Omega_{n,\rho_c,\xi}=\Omega_{m,\rho_c,\xi}.
\end{align}
Without any loss of generality, consider $n<m$ and $m=n+\Theta$ with $\Theta\in\N$ in collision condition \eqref{e:colll3}, we arrive at
\begin{equation}\label{e:cc11}
    \rho^2=-\g \mathscr{F}(n,\xi,\Theta)+\beta k^4 \mathscr{V}(n,\xi,\Theta)
\end{equation}
where,
\[
\mathscr{F}(n,\xi,\Theta)=(n+\xi)(n+\xi+\Theta)+1\quad \text{and}
\]
\[
\mathscr{V}(n,\xi,\Theta)=3(n+\xi)^2(n+\xi+\Theta)^2+(n+\xi)(n+\xi+\Theta)(\Theta^2-1).
\]
\begin{lemma}\label{l:c2} For a fixed $\gamma>0$, $\beta\in\R$ and each $\Theta\in\mathbb{N}$, eigenvalues $\Omega_{n,\rho,\xi}$ and $\Omega_{n+\Theta,\rho,\xi}$ of the operator $\mathcal{Q}_0(\rho,\xi)$ collide for all $n\in\Z$ except $n=\{-2,-1\}$ for $\Theta=2$ when $\beta>0$, and all $n\in[-\Theta,-1]$ except $n=-1$ for $\Theta=1$ when $\beta\leq0$. All collisions take place away from the origin in the complex plane except when $\Theta$ is odd and $n=-(\Theta+1)/2$ in which case eigenvalues $\Omega_{n,\rho,\Theta}$ and $\Omega_{-n-1,\rho,\Theta}$ collide at the origin for $\rho=\rho_c(1/2)$. Table~\ref{tab:col_nonp} provides a range of values of $k$ and $\xi$ for all such collisions.
\begin{table}[ht]
    \centering
    \begin{tabular}{|c|c|c|c|c|}
    \hline
  $\beta$ & $\Theta$ & $n$ & $k$ & $\xi$ \\ \hline
        $\beta >0$ & $\mathbb{N}$ & $(-\infty,-\Theta)\cup(0,\infty)$ & $\left(\min(k_\xi),\infty\right)$ & $(0,1/2]$\\ \hline
        $\beta >0$ & $\mathbb{N}\setminus\{2\}$ &$(-\Theta,-1]$ & $\left(0,\max(k_\xi)\right]$ & $(0,1/2]$\\ \hline
        $\beta >0$ & $1$ &$-\Theta$ & $\left(\min (k_\xi),\infty\right)$ & $(0,1/2]$\\ \hline
        $\beta >0$ & $\geq 3$ &$-\Theta$ & $\left(0,\max(k_\xi)\right]$ & $\left((\Theta-\sqrt{\Theta^2-4})/2,1/2\right] $\\ \hline
       $\beta\leq 0$  & $2$ & $[-\Theta,-1]$ & $\left(\min (k_\xi),\infty\right)$ & $(0,1/2]$\\ \hline
       $\beta\leq 0$  & $\geq3$ & $(-\Theta,-1]$ & $(0,\infty)$ & $(0,1/2]$\\ \hline
       $\beta\leq 0$  & $\geq3$ & $-\Theta$ & $\left(\min(k_\xi),\infty\right)$ & $\left(0,(\Theta-\sqrt{\Theta^2-4})/2\right)$\\ \hline
       $\beta\leq 0$  & $\geq3$ & $-\Theta$ & $(0,\infty)$ & $\left[(\Theta-\sqrt{\Theta^2-4})/2,1/2\right] $\\ \hline
    \end{tabular}
    \caption{For a given sign of $\beta$ and value(s) of $\Theta$, each row lists value(s) of $n$ for which collisions takes place along with the value(s) of $k$ and $\xi$. Here $k_\xi=\left|\g \mathscr{F}(n,\xi,\Theta)/\b \mathscr{V}(n,\xi,\Theta)\right|^{1/4}.$}
    \label{tab:col_nonp}
\end{table}

\end{lemma}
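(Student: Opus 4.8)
The statement is the Bloch--Floquet analogue of Lemma~\ref{l:c1}, so the plan is to recycle that argument after passing to the single real variable $\sigma:=n+\xi$. Under this substitution $\mathscr F(n,\xi,\Theta)=\sigma(\sigma+\Theta)+1$ and $\mathscr V(n,\xi,\Theta)=3\sigma^2(\sigma+\Theta)^2+\sigma(\sigma+\Theta)(\Theta^2-1)$ are exactly the polynomials $\mathscr J$ and $\mathscr U$ from the proof of Lemma~\ref{l:c1}, now evaluated at a real argument rather than at a nonzero integer. I would therefore begin by reusing the completing-the-square factorizations
\[
\mathscr F=\Bigl(\sigma+\tfrac{\Theta}{2}\Bigr)^2-\tfrac{\Theta^2-4}{4},\qquad
\mathscr V=\sigma(\sigma+\Theta)\Bigl(3\bigl(\sigma+\tfrac{\Theta}{2}\bigr)^2+\tfrac{\Theta^2}{4}-1\Bigr)=:\mathscr V_1\mathscr V_2,
\]
which give the same sign chart: for $\Theta\ge3$ both $\mathscr F$ and $\mathscr V$ are positive on $(-\infty,-\Theta)\cup(0,\infty)$ and negative on the open interval bounded by the roots $s_1=-(\Theta+\sqrt{\Theta^2-4})/2\in(-\Theta,-\Theta+\tfrac12)$ and $s_2=-(\Theta-\sqrt{\Theta^2-4})/2\in(-\tfrac12,0)$ of $\mathscr F$, with the usual degenerations $\mathscr F=(\sigma+1)^2\ge0$, $\mathscr V=3(\sigma+1)^2((\sigma+1)^2-1)$ at $\Theta=2$ and $\mathscr F>0$, $\mathscr V=3\sigma^2(\sigma+1)^2\ge0$ at $\Theta=1$.

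\textbf{Locating the lattice in the sign chart.} The one new ingredient is that $\sigma$ now runs over the shifted lattice $\{n+\xi:n\in\Z,\ \xi\in(0,\tfrac12]\}$ rather than over $\Z^\ast$. For fixed $\Theta$ I would sort the integers $n$ by where $\sigma=n+\xi$ sits as $\xi$ sweeps $(0,\tfrac12]$: (i) if $n\ge1$ or $n\le-\Theta-1$ then $\sigma\in(-\infty,-\Theta)\cup(0,\infty)$ for every $\xi$, hence $\mathscr F,\mathscr V>0$; (ii) if $-\Theta+1\le n\le-1$ (nonempty only for $\Theta\ge2$) then $\sigma\in(-\Theta+1,-\tfrac12]$, which for $\Theta\ge3$ lies inside $(s_1,s_2)$, so $\mathscr F,\mathscr V<0$ (at $\Theta=2$ one lands instead in the mixed regime $\mathscr F>0>\mathscr V$); and (iii) the boundary index $n=-\Theta$ has $\mathscr V<0$ throughout $\xi\in(0,\tfrac12]$ when $\Theta\ge2$ (and $\mathscr V>0$ when $\Theta=1$), while $\mathscr F=\mathscr F(-\Theta,\xi,\Theta)$ is positive for $\xi<(\Theta-\sqrt{\Theta^2-4})/2$ and negative for $\xi>(\Theta-\sqrt{\Theta^2-4})/2$, the threshold being exactly where $\sigma=-\Theta+\xi$ crosses $s_1$. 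This last dichotomy is the source of the threshold Floquet exponents $(\Theta-\sqrt{\Theta^2-4})/2$ appearing in Table~\ref{tab:col_nonp}.

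\textbf{Existence of a real $\rho$, and the table.} For each resulting sign pattern, \eqref{e:cc11} shows that $i\Omega_{n,\rho,\xi}$ collides with $i\Omega_{n+\Theta,\rho,\xi}$ for some real $\rho$ iff $-\gamma\mathscr F+\beta k^4\mathscr V\ge0$; comparing this with $\operatorname{sgn}\beta$ reads off the threshold $k_\xi=|\gamma\mathscr F(n,\xi,\Theta)/\beta\mathscr V(n,\xi,\Theta)|^{1/4}$: for $\mathscr F,\mathscr V>0$ one needs $\beta>0$ and $k\ge k_\xi$; for $\mathscr F,\mathscr V<0$ one needs $k\le k_\xi$ when $\beta>0$ and any $k>0$ when $\beta\le0$; for $\mathscr F>0>\mathscr V$ one needs $\beta<0$ and $k\ge k_\xi$ (vacuous at $\beta=0$, consistently with $k_\xi=+\infty$). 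Unioning over the admissible $\xi\in(0,\tfrac12]$ — the perturbation bound $\|\mathcal Q^\ast_\varepsilon(\rho,\xi)\|\to0$ being uniform in $\xi$ — turns the per-$\xi$ thresholds into the $(\min(k_\xi),\infty)$ and $(0,\max(k_\xi)]$ ranges, and keeping only the admissible $n$'s from the previous paragraph (which, for instance, discards $n=-1$ for $\Theta=1$ and $n\in\{-2,-1\}$ for $\Theta=2$ when $\beta\le0$) produces all eight rows of Table~\ref{tab:col_nonp}. For the origin, $i\Omega_{n,\rho,\xi}=0$ is equivalent, after multiplying by $\sigma\ne0$, to $\rho^2=\gamma(\sigma^2-1)+\beta k^4\sigma^2(1-\sigma^2)$; since $\sigma\mapsto\Omega(\cdot,\rho,\xi)$ is an odd function of $\sigma$, two eigenvalues with indices $\Theta$ apart can meet at the origin only if the arguments are antipodal, $\sigma+\Theta=-\sigma$, i.e. $\sigma=-\Theta/2$; with $\sigma=n+\xi$, $n\in\Z$, $\xi\in(0,\tfrac12]$ this forces $\Theta$ odd, $\xi=\tfrac12$, and $n=-(\Theta+1)/2$, $m=n+\Theta=-n-1$, the collision occurring at $\rho=\rho_c(1/2)$ obtained by putting $\sigma=-\Theta/2$ into \eqref{e:cc11}; all other collisions are away from the origin.

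\textbf{Main obstacle.} The only genuinely delicate point is the bookkeeping in the second and third steps: the roots $s_1,s_2$ of $\mathscr F$ (and the roots $0,-\Theta$ of $\mathscr V_1$) move with $\Theta$, so one must track, for all $\Theta$ simultaneously, where each shifted lattice point $n+\xi$ lands, with special attention to the degenerate cases $\Theta\in\{1,2\}$ and to the boundary index $n=-\Theta$, whose sign regime flips as $\xi$ passes $(\Theta-\sqrt{\Theta^2-4})/2$. Once this refined chart is in hand, the rest is a routine transcription of the proof of Lemma~\ref{l:c1}.
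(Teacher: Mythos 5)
Your proposal is correct and follows essentially the same route as the paper: a sign analysis of $\mathscr F$ and $\mathscr V$ on the shifted lattice $n+\xi$ (the paper carries it out directly on $\mathscr F(n,\xi,\Theta)$, $\mathscr V(n,\xi,\Theta)$ rather than via the substitution $\sigma=n+\xi$, and supplements with numerical illustrations), followed by reading off the sign of $-\gamma\mathscr F+\beta k^4\mathscr V$ to get the $k$-ranges and the same antipodality argument, $n+\xi=-\Theta/2$, for the origin collisions. The only slip is that your case split (i)--(iii) omits $n=0$, which of course lands in the $\sigma\in(0,\infty)$ regime and changes nothing.
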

\begin{proof}
 The function $\mathscr{F}(n,\xi,1),\mathscr{F}(n,\xi,2)>0$ for all $n\in\Z$, $\xi\in(0,1/2]$; while $\mathscr{F}(n,\xi,\Theta)$, $\Theta\geq3$ is zero for $n=-\Theta$, $\xi=\dfrac{\Theta-\sqrt{\Theta^2-4}}{2}$; positive for all $n\in(-\infty,-\Theta-1]\cup[0,\infty)$ for all $\xi\in(0,1/2]$ and $n=-\Theta$ for all $\xi\in\left(0,\dfrac{\Theta-\sqrt{\Theta^2-4}}{2}\right)$, while negative for all $n\in[-\Theta+1,-1]$ for all $\xi\in(0,1/2]$ and $n=-\Theta$ for all $\xi\in\left(\dfrac{\Theta-\sqrt{\Theta^2-4}}{2},\dfrac{1}{2}\right]$. The function $\mathscr{V}(n,\xi,1)>0$ for all $n\in\Z$; and $\mathscr{V}(n,\xi,\Theta)$, $\Theta\geq2$ is positive for all $n\in(-\infty,-\Theta-1]\cup[0,\infty)$ and negative for all $n\in[-\Theta,-1]$ for all $\xi\in(0,1/2]$.
There exist $\rho\in\R$ satisfying the collision condition \eqref{e:colll3} if 
\begin{equation}\label{e:cc5}
Z(n,\xi,\Theta)=-\g \mathscr{F}(n,\xi,\Theta)+\beta k^4 \mathscr{V}(n,\xi,\Theta)>0
\end{equation}
which holds for all $n\in\Z$ except $n=\{-2,-1\}$ for $\Theta=2$ when $\b>0$; and for all $n\in[-\Theta,-1]$ except $n=-1$ for $\Theta=1$ when $\beta\leq0$. For instance, see Figure \ref{fig:np1}. Fixing $\b=-1$, $\g=k=1$; $Z(n,0.4,5)>0$ for $n\in[-5,-1]$ and $Z(n,0.4,5)<0$ otherwise. Also, for $\b=\g=k=1$; $Z(n,0.3,2)>0$ for all $n$ except $n=-2,-1$. Now see Figure \ref{fig:np2}. For $\b=\g=1$, $\Theta=3$, $n=-2$; \eqref{e:cc5} holds for $k\in(0,0.811)$ and does not hold otherwise. Also for $\b=\g=1$, $k=0.2$, $n=-\Theta=-4$; \eqref{e:cc5} holds for $\xi\in(0.268,0.5)$ and does not hold otherwise, which agrees with the calculation. Note that $\Omega_{n,\rho,\xi}=0$ at $\rho^2=|((n+\xi)^2-1)(\g-\b k^4(n+\xi)^2)|$. $\Omega_{n,\rho_c,\xi}=\Omega_{n+\Theta,\rho_c,\xi}$=0 for a fixed $\rho_c$ is possible only for $\Theta=-2n-1$, $\xi=1/2$. Therefore, $\Omega_{n,\rho_c,\xi}$ and $\Omega_{n+\Theta,\rho_c,\xi}$ collide at the origin for $n=-(\Theta+1)/2$, for all $n\in[-\Theta,-1]\cap \mathbb{Z}$, $\xi=1/2$ and $\rho_c^2=\left|\left(n^2+n-\dfrac{3}{4}\right)\left(\g-\b k^4\left(n+\dfrac{1}{2}\right)^2\right)\right|$; except the pair $\{n,n+\Theta\}=\{-1,0\}$. All other collisions are away from the origin.

\end{proof}
\begin{figure}%
    \centering
    
    \subfloat[\centering $Z(n,0.4,5)$ with $n:-6$ to $1$, $\b=-1$, $\g=k=1$]{{\includegraphics[width=7cm]{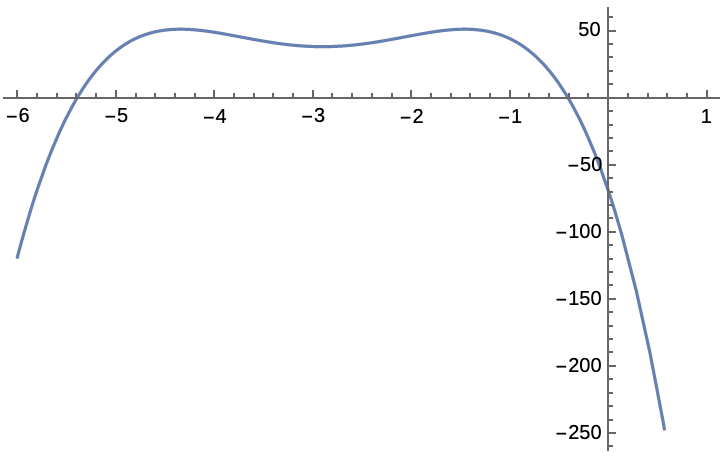} }}
    \qquad
    \subfloat[\centering $Z(n,0.3,2)$ with $n:-3$ to $0$, $\b=\g=k=1$ ]{{\includegraphics[width=7cm]{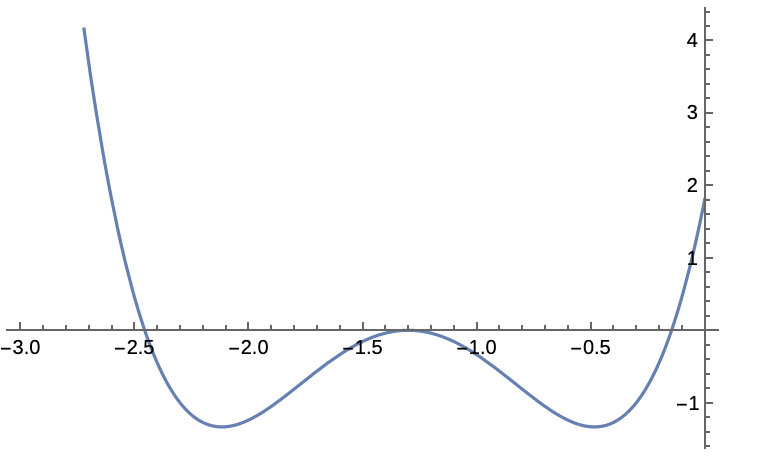} }}
    \caption{Graph of function $Z(n,\xi,\Theta)$ vs. $n$ }\label{fig:np1}%
    \centering
    \subfloat[\centering $Z(-2,0.4,3)$ with $k\in(0,1.5)$, $\g=\b=1$]{{\includegraphics[width=7cm]{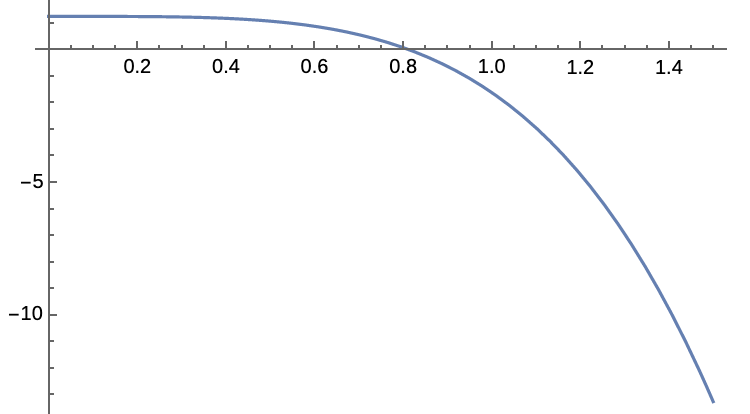} }}
    \qquad
    \subfloat[\centering $Z(-4,\xi,4)$ with $\xi\in(0,0.5)$, $k=0.2$, $\g=\b=1$]{{\includegraphics[width=7cm]{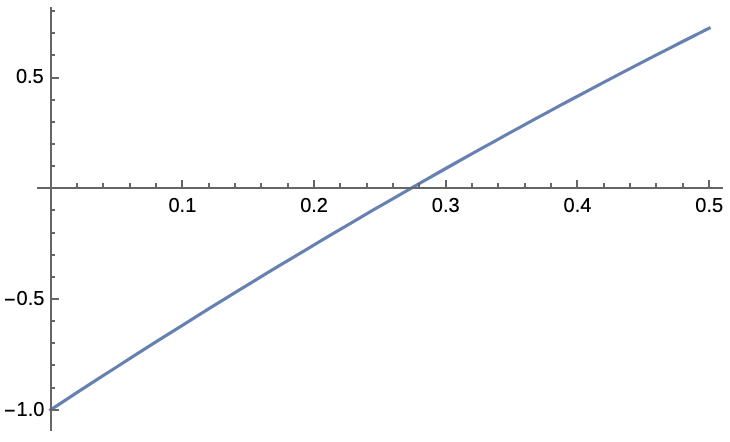} }}
    \caption{Left plot: Graph of function $Z(n,\xi,\Theta)$ vs. $k$, \\ Right plot: Graph of function $Z(n,\xi,\Theta)$ vs. $\xi$.}%
    \label{fig:np2}%
\end{figure}


The linear operator $\mathcal Q_\varepsilon(\rho,\xi)$ can be decomposed similarly to the preceding section
\[
\mathcal Q_\varepsilon(\rho,\xi) = \mathcal{A}_\xi \B_\varepsilon(\rho,\xi)
\]where
\[\mathcal{A}_\xi = \partial_z+i\xi \quad \text{and} \quad \B_\varepsilon(\rho,\xi) = k^2(c+\beta k^2(\partial_z+i\xi)^2-2\eta)+(\g+\rho^2)(\partial z+i\xi)^{-2}
\]
The operator $\mathcal A_\xi$ is skew-adjoint, whereas the operator $\B_\varepsilon(\rho,\xi)$ is self-adjoint. Using the definition in \eqref{eq:krein}, the Krein signature, $\chi_{n,\xi}$ of an eigenvalue $i\Omega_{n,\rho,\xi}$ in $\operatorname{spec}(\mathcal Q_0(\rho,\xi)))$ is
\begin{equation}\label{e:krsig}
\chi_{n,\xi} = \operatorname{sgn}\left( \dfrac{1}{n}\Omega_{n,\rho,\xi}\right),\quad n \in \Z. 
\end{equation}
Consequently, $(n+\xi_0)(m+\xi_0)< 0$, where $\xi_0$ is the value of the floquet exponent where eigenvalues $i\Omega_{n,\rho,\xi}$ and $i\Omega_{m,\rho,\xi}$ collide, is a necessary condition for the bifurcation of colliding eigenvalues from the imaginary axis.
\begin{lemma}[Potentially unstable nodes]Out of all the collisions mentioned in Lemma \ref{l:c2}, all $n\in[-\Theta,-1]$ for each $\Theta\in\N\setminus\{2\}$ when $\b>0$, and all $n\in[-\Theta,-1]$ for each $\Theta\in\N\setminus\{1\}$ when $\b\leq0$ are potentially unstable with respect to finite or short wavelength transverse perturbations.

\end{lemma}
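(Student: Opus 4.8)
The plan is to read the potentially unstable collisions straight off the Krein-signature obstruction, now applied to the complete catalogue supplied by Lemma~\ref{l:c2}. Recall that a pair of simple purely imaginary eigenvalues of $\mathcal Q_\varepsilon(\rho,\xi)$ can leave the imaginary axis under a small perturbation only if they carry opposite Krein signatures \cite{MacKay1986STABILITYWAVES}. Using the factorization $\mathcal Q_\varepsilon(\rho,\xi)=\mathcal A_\xi\,\B_\varepsilon(\rho,\xi)$ together with \eqref{e:krsig}, at a collision occurring at Floquet exponent $\xi_0\in(0,1/2]$ and critical transverse wavenumber $\rho_c$ between the modes $e^{inz}$ and $e^{imz}$, $m=n+\Theta$, the common eigenvalue is $i\omega$ with $\omega=\Omega_{n,\rho_c,\xi_0}=\Omega_{m,\rho_c,\xi_0}$, and the two signatures are $\operatorname{sgn}\big(\omega/(n+\xi_0)\big)$ and $\operatorname{sgn}\big(\omega/(m+\xi_0)\big)$. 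Hence, provided $\omega\neq0$, they are opposite precisely when $(n+\xi_0)(m+\xi_0)<0$; when $\omega=0$ the test is vacuous, and by Lemma~\ref{l:c2} those collisions occur only for $\Theta$ odd, $n=-(\Theta+1)/2$, $\xi_0=1/2$, so they are isolated in $\xi_0$ and must be inspected separately.

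\textbf{Reducing the sign condition to an index set, then intersecting with Lemma~\ref{l:c2}.} Because $\xi_0\in(0,1/2]$, one has $n+\xi_0<0\iff n\le-1$ and $n+\xi_0>0\iff n\ge0$. Since $n<m=n+\Theta$, the product $(n+\xi_0)(m+\xi_0)$ is negative exactly when $n\le-1$ and $n+\Theta\ge0$, i.e. $n\in[-\Theta,-1]\cap\Z$; every collision with $n\notin[-\Theta,-1]$ thus has colliding eigenvalues of \emph{equal} Krein signature (and, as noted above, $\omega\neq0$ there), so it is spectrally inert. It remains to intersect $n\in[-\Theta,-1]$ with the collisions that actually exist according to Lemma~\ref{l:c2}. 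For $\b>0$, a collision between modes $n$ and $n+\Theta$ occurs for every $n\in\Z$ except $n\in\{-2,-1\}$ when $\Theta=2$; since $[-2,-1]=\{-2,-1\}$, no node of $[-\Theta,-1]$ survives when $\Theta=2$, whereas for every $\Theta\in\N\setminus\{2\}$ the whole set $n\in[-\Theta,-1]$ survives. For $\b\le0$, a collision occurs for all $n\in[-\Theta,-1]$ except $n=-1$ when $\Theta=1$; since $[-1,-1]=\{-1\}$, the case $\Theta=1$ is empty and for every $\Theta\in\N\setminus\{1\}$ the whole set $n\in[-\Theta,-1]$ survives. This reproduces exactly the two asserted index sets.

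\textbf{Disposing of the origin collisions.} For $\Theta$ odd the node $n=-(\Theta+1)/2\in[-\Theta,-1]$ produces $\omega=0$ only at the endpoint $\xi_0=1/2$, where the Krein test is inconclusive. However, by Lemma~\ref{l:c2} (and its proof / Table~\ref{tab:col_nonp}) the same pair of modes also collides, away from the origin, for a nonempty interval of $\xi_0\in(0,1/2)$ with $\rho_c=\rho_c(\xi_0)$; for those $\xi_0$ one has $\omega\neq0$ together with $n+\xi_0\le-1/2<0<\xi_0\le m+\xi_0$ (here $m=(\Theta-1)/2\ge0$), so the opposite-signature condition holds and the node remains potentially unstable. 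All other surviving collisions in $[-\Theta,-1]$ are already off the origin by Lemma~\ref{l:c2}, so no further exception arises and the lemma follows by assembling these facts.

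\textbf{Main obstacle.} There is no new analytic content beyond Lemma~\ref{l:c2}, the factorization of $\mathcal Q_\varepsilon(\rho,\xi)$, and the Krein computation \eqref{e:krsig}; the work is purely bookkeeping. I expect the only delicate point to be the careful cross-checking of the sub-cases of Lemma~\ref{l:c2} against the sign condition $(n+\xi_0)(m+\xi_0)<0$ — in particular, not mistaking an origin collision at $\xi_0=1/2$ for a certificate of stability, since the neighbouring collisions at $\xi_0<1/2$ are genuinely nonzero and still satisfy the Krein criterion, and correctly matching the $\Theta=2$ (for $\b>0$) and $\Theta=1$ (for $\b\le0$) exclusions with the endpoints $[-2,-1]$ and $[-1,-1]$ of the interval $[-\Theta,-1]$.
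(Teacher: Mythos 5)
Your proposal is correct and follows essentially the same route the paper intends: the paper states this lemma without a written proof, relying exactly on the Krein-signature criterion $(n+\xi_0)(m+\xi_0)<0$ from \eqref{e:krsig} (which for $\xi_0\in(0,1/2]$ reduces to $n\in[-\Theta,-1]$) intersected with the existence catalogue of Lemma~\ref{l:c2}, yielding the exclusions $\Theta=2$ for $\b>0$ and $\Theta=1$ for $\b\leq0$. Your additional care with the $\omega=0$ collisions at $\xi_0=1/2$ is a sensible refinement but does not change the argument.
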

\begin{table}[ht]
    \centering
    \begin{tabular}{|c|c|c|c|c|c|c|c|c|}
    \hline
       $\boldsymbol{\Theta}$  & \multicolumn{4}{c|}{\textbf{LWTP}}& \multicolumn{3}{c|}{\textbf{F/SWTP}} \\\hline
       & \multicolumn{2}{c|}{\textbf{Periodic}} & \multicolumn{2}{c|}{\textbf{Non-periodic}} & \textbf{Periodic} & \multicolumn{2}{c|}{\textbf{Non-periodic}} \\\hline
       & $\boldsymbol{\beta>0}$ & $\boldsymbol{\beta\leq0}$ & $\boldsymbol{\beta>0}$ & $\boldsymbol{\beta\leq0}$ & $\boldsymbol{\beta\in\R}$ & $\boldsymbol{\beta>0}$ & $\boldsymbol{\beta\leq0}$ \\\hline
       $1$ & None & None & None & None & None & $\{-1,0\}$ & None \\\hline
       $2$ & $\{-1,1\}$ & $\{-1,1\}$ & None & None & None & None & $\{-1,1\}$ \\
       ~&~&~&~&~&~&~&$, \{-2,0\}$\\\hline
       $\geq3$ & None & None & None &None& $\{-1,\Theta-1\}$&$\{-1,\Theta-1\}$&$\{-1,\Theta-1\}$\\
       ~& ~&~& ~&~&$\{-2,\Theta-2\},$&$\{-2,\Theta-2\},$&$\{-2,\Theta-2\},$\\
       ~& ~&~& ~&~&$,\dots$&$,\dots$&$,\dots$\\
       ~& ~&~&~&~&$\{-\Theta+1,1\}$&$\{-\Theta+1,1\}$&$\{-\Theta+1,1\}$\\
       ~&~&~&~&~&~&$\{-\Theta,0\}$&$\{-\Theta,0\}$\\\hline
    \end{tabular}
    \caption{Potentially unstable nodes for a given $\Theta\in\mathbb{N}$. Here `LWTP' stands for {\em Long wavelength transverse perturbations} and `F/SWTP' stands for {\em Finite or short wavelength transverse perturbations}. }
    \label{tab:col}
\end{table}
\subsection{Long wavelength transverse perturbations}
We start the further analysis with the values of $\rho$ sufficiently close to origin, that is, $|\rho|\leq\rho_0
$ for some $\rho_0>0$. Table \ref{tab:col} summarizes all the potentially unstable nodes with respect to long wavelength transverse perturbations. 
For $\varepsilon$ and $\rho$ sufficiently small, $\{\Omega_{-1,\rho},\Omega_{1,\rho}\}$ are pair of eigenvalues bifurcating continuously from $\{\Omega_{-1,0},\Omega_{1,0}\}$. For $\varepsilon=0$, $\{\Omega_{-1,0},\Omega_{1,0}\}$ are equipped with eigenfunctions $\{e^{-iz},e^{iz}\}$. We choose the real basis $\{\cos z,\sin z\}.$ We calculate expansion of a basis $\{\psi_1,\psi_2\}$ for the eigenspace corresponding to the eigenvalues of $\{\Omega_{-1,\rho},\Omega_{1,\rho}\}$ in $L^2_0(\mathbb{T})$ by using expansions of $\eta$ and $c$ in \eqref{e:expptw}, for small $\varepsilon$ and $\rho$ as
\begin{align*}
	\psi_{1}(z)  & =\cos z+2 \varepsilon A_{2} \cos 2 z+O(\varepsilon^{2}),\\
	\psi_{2}(z) & =\sin z+2 \varepsilon A_{2} \sin 2 z+O(\varepsilon^{2}).
\end{align*} 
We have the following expression for  $\mathcal{Q}_{\varepsilon}(\rho)$ after expanding and using $\eta$ and $c$
\begin{equation}\label{e:ex}
    \mathcal{Q}_{\varepsilon}(\rho)= \g(\partial_z+\partial_z^{-1})+\rho^2\partial_z^{-1}+\b k^4(\partial_z+\partial_z^3)-2\varepsilon k^2\partial_z(\cos z)+O(\varepsilon^2)
\end{equation}
In order to locate the bifurcating eigenvalues for $|\varepsilon|$ sufficiently small, we calculate the action of $\mathcal{Q}_{\varepsilon}(\rho)$ on the extended eigenspace $\{\psi_1(z), \psi_2(z)\}$ viz.
\begin{align}\label{eq:bmat1}
    \mathcal{T}_{\varepsilon}(\rho) = \left[ \frac{\langle \mathcal{Q}_\varepsilon(\rho)\psi_{i}(z),\psi_{j}(z)\rangle}{\langle\psi_{i}(z),\psi_{i}(z)\rangle} \right]_{i,j=1,2}
\text{ and }
    \mathcal{I}_{\varepsilon} = \left[ \frac{\langle \psi_{i}(z),\psi_{j}(z)\rangle}{\langle\psi_{i}(z),\psi_{i}(z)\rangle} \right]_{i,j=1,2}.
\end{align}
We use expansion of $\mathcal Q_\varepsilon(\rho)$ in \eqref{e:ex} to find actions of $\mathcal{Q}_\varepsilon(\rho)$ and identity operator on $\{\psi_1,\psi_2\}$, and arrive at
\begin{align*}
	\mathcal{T}_\varepsilon(\rho)&=\begin{pmatrix}
	0 & \rho^2+2\varepsilon^2k^2\eta_2 \\ -\rho^2 & 0\end{pmatrix}+O(\varepsilon^2(\rho+\varepsilon)),\\
	\end{align*}
	To locate where these two eigenvalues are bifurcating from the origin, we analyze the characteristic equation $	\left|\mathcal{T}_\varepsilon(\rho)-\lambda \mathcal{I}\right|=0$, where $\mathcal{I}_\varepsilon$ is $2\times 2$ identity matrix. From which we conclude that
	\begin{equation}
	    \lambda^2+\rho^2(\rho^2+2\varepsilon^2k^2\eta_2)=O(|\varepsilon|\rho^2(\rho^2+\varepsilon^2))
	\end{equation}
	From which we conclude that
	\begin{equation}
	    \lambda^2=-\rho^2(\rho^2+2\varepsilon^2k^2\eta_2)+O(|\varepsilon|\rho^2(\rho^2+\varepsilon^2))
	\end{equation}
	For $\rho=\varepsilon=0$, we obtain zero as a double eigenvalue, which is consistent with our calculation. For $\beta<0$, we obtain two purely imaginary eigenvalues for all $\rho$ and $\varepsilon$ sufficiently small. For $\b>0$, we obtain two real eigenvalues with opposite signs when
	\begin{equation}
	    k>\sqrt[4]{\dfrac{\g}{4\b}}
	\end{equation}
 Hence the Theorem \ref{t:1}.

\subsection{Finite or short wavelength transverse perturbations}
Here we work with the values of $\rho$ away from the origin, $|\rho|\geq\rho_0$ for some $\rho_0>0$. We do further analysis to check if collisions in Table \ref{tab:col} indeed lead to instability or not.
\subsubsection{\underline{(In)stability analysis for $\Theta=1$}:}
For periodic perturbations, there are no collisions mentioned for $\Theta=1$. However, there in one pair of colliding eigenvalues $\{\Omega_{-1,\rho,\xi}, \Omega_{0,\rho,\xi}\}$ for $\b>0$ with respect to non-periodic perturbations, colliding for
\begin{equation}\label{e:rc}
    \rho^2=-\g(1-\xi+\xi^2)+3\b k^4\xi^2(1-\xi)^2 \quad \text{and}\quad k>min\left(\dfrac{\g(1-\xi+\xi^2)}{3\b \xi^2(1-\xi)^2}\right)^{1/4}
\end{equation}
There exists a curve $\rho=\rho_c$ given in \eqref{e:rc} along which \[\Omega(\rho_c,\xi):=\Omega_{-1,\rho_c,\xi} =\Omega_{0,\rho_c,\xi}.\]
Furthermore,
\begin{align}
 \phi_{0,-1}(z) = e^{-i z}
 \quad \quad and \quad\quad
 \phi_{0,0}(z) = 1
\end{align}
forms the corresponding eigenspace for $\mathcal{Q}_0(\rho_c,\xi)$ associated with the two eigenvalues. Let
\begin{align}
    i \Omega(\rho_c,\xi) + i \theta_{\varepsilon,\rho,-1}
    \quad \quad and \quad\quad
    i \Omega(\rho_c,\xi) + i \theta_{\varepsilon,\rho,0}
\end{align}
be the eigenvalues of $\mathcal{Q}_\varepsilon(\rho_c,\xi)$ bifurcating from $i\Omega_{-1,\rho_c,\xi}$ and $i\Omega_{0,\rho_c,\xi}$ respectively for $|\varepsilon|$ and $|\rho-\rho_c|$ small. Let $\{\phi_{\varepsilon,\rho,-1}(z), \phi_{\varepsilon,\rho,0}(z)\}$ be the extended eigenspace associated with two bifurcating eigenvalues. Following \cite{Creedon2021High-FrequencyApproach}, we can take,
\begin{align}\label{eq:eigg01}
    \phi_{\varepsilon,-1,\rho}(z) =& e^{-iz}+\varepsilon \phi_{-1}+O(\varepsilon^2), \\
    \phi_{\varepsilon,0,\rho}(z) =& 1+\varepsilon\phi_{0}+O(\varepsilon^2)\label{eq:eigg02}
\end{align}
Using orthonormality conditions on the eigenfunctions $\phi_{\varepsilon,\rho,-1}$ and $\phi_{\varepsilon,\rho,0}$, we get
\[
\phi_{-1}=\phi_{0} = 0.
\]
In order to find eigenvalues, we compute matrix representations of $\mathcal{Q}_\varepsilon(\rho_c,\xi)$ and identity operators on $\{\phi_{\varepsilon,-1,\rho}(z), \phi_{\varepsilon,0,\rho}(z)\}$ for sufficiently small $|\varepsilon|$ and $|\rho-\rho_c|$ 
\[
\mathcal{B}_\varepsilon(\rho,\xi) = 
\begin{pmatrix}i\Omega(\rho_c,\xi)-i\dfrac{\varsigma}{\xi-1}&
-i \varepsilon k^2\xi
\\
-i\varepsilon k^2(\xi-1) &i\Omega(\rho_c,\xi)-i\dfrac{\varsigma}{\xi}
\end{pmatrix}+O(|\varepsilon|(|\varsigma|+|\varepsilon|)),
\]
where $\varsigma=\rho^2-\rho_c^2$ and
$\mathcal{I}_\varepsilon$ is the $2\times 2$ identity matrix. Calculating the characteristic equation $\det(\mathcal{B}_\varepsilon(\rho,\xi)-\lambda\mathcal{I}_\varepsilon)=0$ for $\lambda$ of the form
\[
\lambda = i\Omega(\rho_c,\xi) +i\theta,
\]
leads to the polynomial equation
\[
\theta^2+\theta\varsigma\left(\dfrac{1}{\xi-1} + \frac{1}{\xi} +
O(\varepsilon^2)\right)
-\varepsilon^2k^4\xi(\xi-1)+\frac{\varsigma^2}{\xi(\xi-1)} +
  O(\varepsilon^2(|\varsigma|+\varepsilon^2)) =0.
\]
A direct computation shows that the discriminant of this polynomial is
\[
\operatorname{disc}_\varepsilon(\varsigma,\xi) = 
\frac{\varsigma^2}{\xi^2(\xi-1)^2} +4k^4\xi(\xi-1)\varepsilon^2+
O(\varepsilon^2(|\varsigma|+\varepsilon^2)).
\]For any $\varepsilon$ sufficiently small there exists
\[
\varsigma_\varepsilon(\xi) = 2k^2\xi^{3/2}(1-\xi)^{3/2}|\varepsilon|+O(\varepsilon^2)>0
\]
such that the two eigenvalues of $\mathcal Q_\varepsilon(\rho,\xi)$  are purely imaginary when $|\rho^2-\rho_c^2|\geq \varsigma_\varepsilon(\xi)$ and complex with opposite nonzero real parts when $|\rho^2-\rho_c^2|<\varsigma_\varepsilon(\xi)$, which proves the Theorem~\ref{t:2}.

The instability result in Theorem~\ref{t:2} is also supported by numerical experiments.  We consider the eigenvalue problem~\eqref{e:non} with operator $\mathcal Q_{\varepsilon}(\rho,\xi)$ defined by~\eqref{e:op7}. We use the shift-invert technique~\cite{saad1992numerical} and consider the following problem,
\begin{equation}\label{e:shiftinvert}
    \left[\mathcal Q_{\varepsilon}(\rho,\xi)\varphi-i\omega\right]^{-1}=\frac{1}{\lambda-i\omega}\varphi;\quad\varphi\in L^2(\T)\quad\text{and}\quad\xi\in(0,1/2].
\end{equation}
Here $\omega \in \mathbb{R}$ and is a guess chosen close to but not equal to $\lambda$. We use the MINRES~\cite{trefethen1997numerical} method to invert the operator iteratively.
The eigenvalue problem is solved numerically by \textbf{eigs} function using MATLAB which is a matrix-free function based on Arnoldi iterations. This allows us to have a $O(N\log N)$ flops scheme instead of $O(N^2)$ operations that would be needed if we would form an operator matrix.
\begin{figure}[ht]
    \centering
    \includegraphics[width=0.49\textwidth]{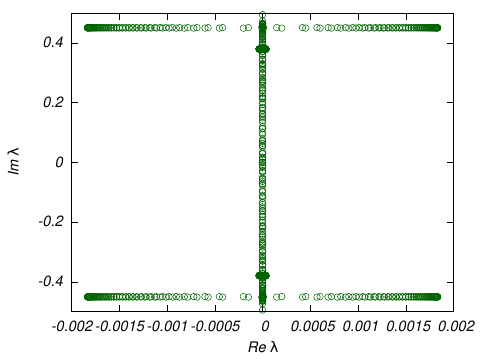}
    \includegraphics[width=0.49\textwidth]{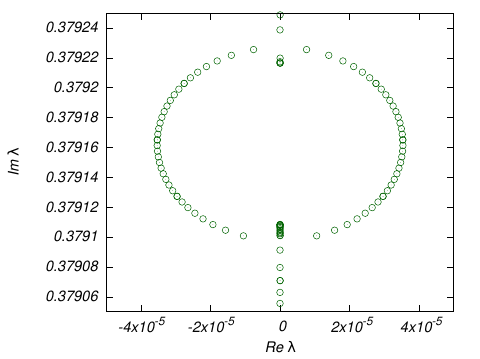}
    \caption{Left panel: Eigenvalue Spectrum of $\mathcal{Q}_\varepsilon(\rho,\xi)$ with $\gamma=1$, $\beta=1$, $k=2>4^{1/4}$ with the amplitude of the initial condition being $\epsilon = 0.01$. \\
    Right panel: zoom into a high-frequency bubble centered around $0.37916 i$ on the imaginary axis.}
    \label{fig:spectrum}
\end{figure}

\begin{figure}[ht]
    \centering
    \includegraphics[width=0.49\textwidth]{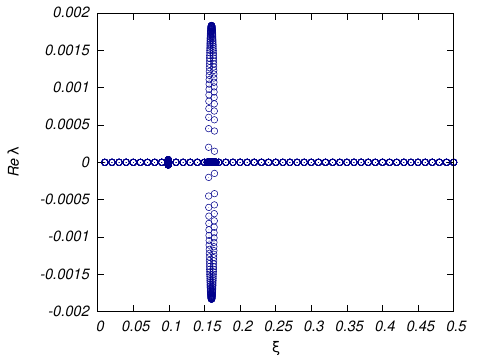}
    \caption{$\xi$ vs. $\Re(\lambda)$ of $\mathcal{Q}_\varepsilon(\rho,\xi)$ with $\gamma=1$, $\beta=1$, $k=2>4^{1/4}$ with the amplitude of the initial condition being $\epsilon = 0.01$.}
    \label{fig:xi}
\end{figure}

The spectrum of $\mathcal{Q}_\varepsilon(\rho,\xi)$ with $\gamma=1$, $\beta=1$, and $k=2>4^{1/4}$ is shown in Figure~\ref{fig:spectrum}. High-frequency bubbles appear along the imaginary axis for the solution~\eqref{e:expptw} with the amplitude of the initial condition chosen to be $\epsilon = 0.01$. In the right panel of Figure~\ref{fig:spectrum}, we zoom into one of the bubbles centered around $0.37916 i$ on the imaginary axis. 
In Figure~\ref{fig:xi}, we show the range of parameter $\xi$ for two high-frequency bubbles located above the imaginary axis.

\subsubsection{\underline{(In)stability analysis for $\Theta\geq2$}:}
For some $n \in \mathbb{Z}$ and a fixed $\Theta\geq2$, we have
\begin{equation}\label{e:1}
 i \Omega_{n, \rho_\ast,\xi}=i \Omega_{n+\Theta, \rho_\ast,\xi}=i \Omega(\rho_\ast,\xi), \quad \xi\in (-1/2,1/2]
\end{equation}
$\xi=0$ represents the periodic case and $\xi\neq0$ represents the non-periodic case. $i\Omega$ is an eigenvalue of $\mathcal{Q}_0(\rho_c,\xi)$ of multiplicity two with an orthonormal basis of eigenfunctions $\{e^{inz},e^{i(n+\Theta)z}\}$. Let $i \Omega + i \nu_{\varepsilon,n}$ and $i \Omega + i \nu_{\varepsilon,n+2}$ be the eigenvalues of $\mathcal{Q}_\varepsilon(\rho,\xi)$ bifurcating from $i\Omega_{n,\rho_c,\xi}$ and $i\Omega_{n+2,\rho_c,\xi}$ respectively, for $|\varepsilon|$ and $|\rho-\rho_c|$ small. Let $\{\varphi_{\varepsilon,n}(z), \varphi_{\varepsilon,n+2}(z)\}$ be a orthonormal basis for the corresponding eigenspace. We suppose the following expansions 
\begin{align}\label{eq:eiggg01}
    \varphi_{\varepsilon,n}(z) =& e^{inz}+\varepsilon\varphi_{n,1}+\varepsilon^2\varphi_{n,2}+\dots+\varepsilon^\Theta\varphi_{n,\Theta}+O(\varepsilon^{\Theta+1}), \\
    \varphi_{\varepsilon,n+\Theta}(z) =& e^{i(n+\Theta)z}+\varepsilon\varphi_{n+\Theta,1}+\varepsilon^2\varphi_{n+\Theta,2}+\dots+\varepsilon^\Theta\varphi_{n+\Theta,\Theta}+O(\varepsilon^{\Theta+1}).\label{eq:eiggg02}
\end{align}
We use orthonormality of $\varphi_{\varepsilon,n}$ and $\varphi_{\varepsilon,n+\Theta}$ to find that
\[
\varphi_{n,1} = \varphi_{n,2} =\dots=\varphi_{n,\Theta}= \varphi_{n+\Theta,1} = \varphi_{n+\Theta,2} = \dots = \varphi_{n+\Theta,\Theta}=0.\]
Using the expansions of $\eta$ and $c$ in \eqref{e:expptw}, we expand $\mathcal{Q}_\varepsilon(\rho,\xi)$ in $\varepsilon$ as
\begin{equation}
\begin{aligned}\label{e:op111}
    \mathcal{Q}_\varepsilon(\rho,\xi)=\mathcal{Q}_0(\rho,\xi)+k^2(\b_2\varepsilon^2+\b_4\varepsilon^4+\dots)(\partial_z+i\xi)&-2k^2\varepsilon\delta_1(\partial_z+i\xi)(\cos z)\\&-\dots-2k^2\varepsilon^\Theta\delta_\Theta(\partial_z+i\xi)(\cos (\Theta z))
\end{aligned}
\end{equation}
To explicitly obtain the values of all unknown coefficients in the expansion of $\mathcal{Q}_\varepsilon(\rho,\xi)$, we require coefficients of higher powers of $\varepsilon$ in the expansion of solution $\eta$. Calculating higher coefficients is difficult as the coefficients of the solution do not seem to have any apparent symmetry. Therefore, we pursue the instability analysis without calculating the unknown coefficients explicitly.

Following the same procedure as in the preceding subsection, we arrive at
\begin{align*}
	\mathscr{G}_\varepsilon(\rho,\xi)&=\begin{pmatrix}
		\mathscr{G}_{11} & \ \mathscr{G}_{12} \\ \mathscr{G}_{21} & \mathscr{G}_{22}\end{pmatrix}+O(\varepsilon^{\Theta+1}),\\
	\end{align*}
where 
\begin{align*}
		\mathscr{G}_{11}& =i\Omega(\rho_\ast,\xi)-i\dfrac{\varsigma}{n+\xi}+ik^2(n+\xi)(\b_2\varepsilon^2+\b_4\varepsilon^4+\dots)  , \\
	\mathscr{G}_{12} & =-ik^2(n+\xi+\Theta)\delta_\Theta \varepsilon^\Theta,\\
	\mathscr{G}_{21}&=-ik^2(n+\xi)\delta_\Theta \varepsilon^\Theta,\\ \mathscr{G}_{22}&=i\Omega(\rho_\ast,\xi)-i\dfrac{\varsigma}{n+\xi+\Theta}+ik^2(n+\xi+\Theta)(\b_2\varepsilon^2+\b_4\varepsilon^4+\dots),
\end{align*} 
The resulting discriminant of the characteristic equation $\det(\mathscr{G}_{\varepsilon}(\rho,\xi)-(i \Omega(\rho_\ast,\xi) + i \theta) \mathcal{I}_{\varepsilon}) = 0$ is
\begin{align*}
    \operatorname{disc}_\varepsilon(\varsigma) = \dfrac{\Theta^2\varsigma^2}{(n+\xi)^2(n+\xi+\Theta)^2}+k^4\Theta^2\beta_2^2\varepsilon^4+O(\varepsilon^2(|\varsigma|+|\varepsilon^3|)).
\end{align*}
For sufficiently small $|\varsigma|$ and $|\varepsilon|$ $\operatorname{disc}_\varepsilon(\varsigma)$ is positive which implies that no eigenvalue of $\mathcal{Q}_\varepsilon(\rho,\xi)$ is bifurcating from the imaginary axis due to collision. Hence the proof of Theorems \ref{t:3} and \ref{t:4}.

\section{Generalized RMKP} \label{sec:gRMKP}
\subsection{Construction of the spectral problem}
Linearizing the gRMKP equation \eqref{e:gRMKP} about its one-dimensional periodic traveling wave $\eta_\mathfrak{g}$ in \eqref{E:w_ansatz} and seeking a solution of the form
\begin{equation}\label{e:per1}
\eta_\mathfrak{g}^\ast(z,y,t)=e^{\frac{\lambda}{k} t+i\rho y}\varphi(z),\quad \lambda\in \mathbb{C},~ \rho\in\R 
\end{equation}, we arrive at
\begin{equation}\label{e:opt1}
\mathcal G^\mathfrak{g}_a(\lambda,\rho) \varphi:=(\lambda\partial_z- k^2\partial_z^2(c-\J_k-2\alpha_1\eta_\mathfrak{g}-3\alpha_2\eta^2_\mathfrak{g})-\rho^2-\g)\varphi=0
\end{equation}
\subsubsection{\underline{Periodic perturbations}}\label{ss:g11}
The invertibility problem
\[\mathcal G^\mathfrak{g}_{\varepsilon}(\lambda,\rho)\varphi=0;\quad \varphi\in L^2(\T)\]
is transformed into a spectral problem which requires invertibility of $\partial_z$. Since $\partial_z$ is not invertible in $L^2(\mathbb{T})$, we restrict the problem to mean-zero subspace $L^2_0(\mathbb{T})$, defined in \eqref{e:zerom}, of $L^2(\mathbb{T})$. 
\begin{lemma}
$\mathcal{G}^\mathfrak{g}_\varepsilon(\lambda,\rho)$ acting in $L^2(\T)$ with domain $H^{\mathfrak{b}+2}(\T)$ is not invertible if and only if $\lambda$ belongs to $L_0^2(\mathbb{T})$-spectrum of the operator $\mathcal Q_{\varepsilon}(\rho)$, where 
\begin{equation}\label{e:op10}
	\mathcal Q^\mathfrak{g}_{\varepsilon}(\rho):= k^2\partial_z(c- \J_k-2\alpha_1\eta_\mathfrak{g}-3\alpha_2\eta^2_\mathfrak{g})+(\g+\rho^2)\partial z^{-1}.\\
\end{equation}
\end{lemma}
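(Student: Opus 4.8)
The plan is to mimic the proof of Lemma~\ref{l:1} almost verbatim, with $\b k^2\partial_z^2$ replaced throughout by $-\J_k$ and the cubic term $-3\alpha_2\eta_\mathfrak{g}^2$ carried along as a bounded multiplication operator. First I would record that $L^2_0(\T)$ is invariant under $\mathcal G^\mathfrak{g}_\varepsilon(\lambda,\rho)$: from \eqref{e:opt1} one has $\mathcal G^\mathfrak{g}_\varepsilon(\lambda,\rho)\varphi=\lambda\partial_z\varphi-k^2\partial_z^2\bigl[(c-\J_k-2\alpha_1\eta_\mathfrak{g}-3\alpha_2\eta_\mathfrak{g}^2)\varphi\bigr]-(\rho^2+\g)\varphi$, and the first two summands are exact $z$-derivatives, hence of zero mean for every $\varphi$, while the last has zero mean exactly when $\varphi$ does; the fact that multiplication by $\eta_\mathfrak{g}$ or $\eta_\mathfrak{g}^2$ does not preserve zero mean is harmless because it sits inside $\partial_z^2$.

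Next I would argue that $\mathcal G^\mathfrak{g}_\varepsilon(\lambda,\rho)$, acting in $L^2(\T)$ with the indicated domain, has compact resolvent, so its spectrum is a discrete set of eigenvalues of finite multiplicity; this is the one step where Hypothesis~\ref{h:m} is genuinely used, since J1 makes $\J_k$ a real even Fourier multiplier and J2 forces its symbol $\jmath(kn)$ to grow at most polynomially, so that $\mathcal G^\mathfrak{g}_\varepsilon(\lambda,\rho)$ is invertible on all sufficiently high Fourier modes and its domain embeds compactly in $L^2(\T)$ by Rellich. Granting this, non-invertibility of $\mathcal G^\mathfrak{g}_\varepsilon(\lambda,\rho)$ on $L^2(\T)$ is equivalent to the existence of a nonzero $\psi$ in the domain with $\mathcal G^\mathfrak{g}_\varepsilon(\lambda,\rho)\psi=0$; integrating this identity over a period kills the two derivative terms and leaves $(\rho^2+\g)\int_0^{2\pi}\psi\,dz=0$, so $\g>0$ forces $\psi\in L^2_0(\T)$. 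Hence $\mathcal G^\mathfrak{g}_\varepsilon(\lambda,\rho)$ is non-invertible on $L^2(\T)$ if and only if its restriction to $L^2_0(\T)$ is non-invertible.

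Finally, on $L^2_0(\T)$ the operator $\partial_z$ is boundedly invertible (the Fourier multiplier $n\mapsto(in)^{-1}$, $n\neq0$), so applying $\partial_z^{-1}$ to $\mathcal G^\mathfrak{g}_\varepsilon(\lambda,\rho)\psi=0$ turns it into $\lambda\psi-k^2\partial_z\bigl[(c-\J_k-2\alpha_1\eta_\mathfrak{g}-3\alpha_2\eta_\mathfrak{g}^2)\psi\bigr]-(\g+\rho^2)\partial_z^{-1}\psi=0$, i.e. $\mathcal Q^\mathfrak{g}_\varepsilon(\rho)\psi=\lambda\psi$ with $\mathcal Q^\mathfrak{g}_\varepsilon(\rho)$ exactly the operator in \eqref{e:op10}; conversely any such $\psi$ gives a kernel vector of $\mathcal G^\mathfrak{g}_\varepsilon(\lambda,\rho)$ upon applying $\partial_z$. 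Thus the restriction of $\mathcal G^\mathfrak{g}_\varepsilon(\lambda,\rho)$ to $L^2_0(\T)$ is non-invertible iff $\lambda\in\operatorname{spec}_{L^2_0(\T)}(\mathcal Q^\mathfrak{g}_\varepsilon(\rho))$, which is the assertion. I expect the only real obstacle to be the compact-resolvent step, and within it the correct reading of the graph-norm domain: when $\mathfrak{b}\geq0$ the top-order part is $-k^2\partial_z^2\J_k$, of order $\mathfrak{b}+2$, and the domain is $H^{\mathfrak{b}+2}(\T)$ as stated (covering RMKP, RM-fKdV-KP, RMILW-KP, RMG-KP and RM-mKdV-KP), whereas for $-1\leq\mathfrak{b}<0$ the constant-coefficient term $-k^2c\,\partial_z^2$ is top order and the domain should be read as $H^{\max(2,\,\mathfrak{b}+2)}(\T)$; in either case the embedding into $L^2(\T)$ is compact and the remaining steps are unchanged.
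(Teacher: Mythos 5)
Your proposal is correct and follows essentially the same route as the paper, which simply declares this proof ``similar to the proof of Lemma~\ref{l:1}'': invariance of $L^2_0(\T)$, compactness of the resolvent forcing a pure point spectrum, the mean-zero constraint from $\g>0$, and inversion of $\partial_z$ on $L^2_0(\T)$ to pass to the spectral problem for $\mathcal Q^\mathfrak{g}_\varepsilon(\rho)$. Your side remark on reading the domain as $H^{\max(2,\,\mathfrak{b}+2)}(\T)$ when $-1\leq\mathfrak{b}<0$ is a reasonable refinement of a point the paper leaves implicit.
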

\begin{proof}
The proof is similar to the proof of Lemma \ref{l:1}
\end{proof}
We arrive at pseudo-differential spectral problem
\begin{equation}\label{e:sp122}
  \mathcal Q^\mathfrak{g}_{\varepsilon}(\rho)\varphi=\lambda\varphi,
\end{equation}where $\varphi\in L_0^2(\T)$. With respect to periodic perturbations, we will study the spectrum of the operator $\mathcal Q^\mathfrak{g}_{\varepsilon}(\rho)$ acting in  $L^2_0(\T)$ with domain $H^{\mathfrak{b}+1}(\T))\cap L^2_0(\T)$. 
\begin{proposition}\label{p:3}
The operator $\mathcal Q^\mathfrak{g}_{\varepsilon}(\rho)$ possess following properties.
\begin{enumerate}
    \item The operator $\mathcal Q^\mathfrak{g}_{\varepsilon}(\rho)$ commutes with the reflection through the real axis.
    \item The operator $\mathcal Q^\mathfrak{g}_{\varepsilon}(\rho)$ anti-commutes with the reflection through the origin and the imaginary axis.
    \item The spectrum of $\mathcal Q^\mathfrak{g}_{\varepsilon}(\rho)$ is symmetric with respect to the reflections through origin, real axis and imaginary axis.
\end{enumerate}
\end{proposition}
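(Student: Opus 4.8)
The plan is to reproduce the argument of Proposition~\ref{p:1} almost verbatim, working on $L^2_0(\T)$ with the three reflections $\mathcal{R}_r$, $\mathcal{R}_i$, $\mathcal{R}_o$ of \eqref{e:ref}. The only features of $\mathcal{Q}^\mathfrak{g}_{\varepsilon}(\rho)$ in \eqref{e:op10} that were not already present in $\mathcal{Q}_{\varepsilon}(\rho)$ are the nonlocal multiplier $\mathcal{J}_k$ in place of $\b k^2\partial_z^2$ and the cubic potential term $-3\alpha_2\eta_\mathfrak{g}^2$ alongside $-2\alpha_1\eta_\mathfrak{g}$; so the substance of the proof is to check that these two pieces commute with all three reflections, after which the sign bookkeeping is identical.

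First I would record the elementary rules. The operators $\partial_z$ and $\partial_z^{-1}$ (the latter well defined on $L^2_0(\T)$) have purely imaginary, odd Fourier symbols $in$ and $1/(in)$; hence, exactly as in \eqref{e:01}--\eqref{e:3}, they commute with $\mathcal{R}_r$ and anti-commute with $\mathcal{R}_i$ and $\mathcal{R}_o$. The operator $\mathcal{J}_k$ is the Fourier multiplier with symbol $\jmath(k\,\cdot)$, which by condition (J1) of Hypothesis~\ref{h:m} is real-valued and even; a real even symbol is Hermitian, so $\mathcal{J}_k$ commutes with $\mathcal{R}_r$, and being even it also commutes with $\mathcal{R}_o$, and being real it commutes with $\mathcal{R}_i$. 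Finally, by Theorem~\ref{T:sol} the profile $\eta_\mathfrak{g}$ is real and even, hence so is $\eta_\mathfrak{g}^2$, and multiplication by a real even function commutes with each of $\mathcal{R}_r$, $\mathcal{R}_i$, $\mathcal{R}_o$.

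Assembling these in $\mathcal{Q}^\mathfrak{g}_{\varepsilon}(\rho)=k^2\partial_z(c-\mathcal{J}_k-2\alpha_1\eta_\mathfrak{g}-3\alpha_2\eta_\mathfrak{g}^2)+(\g+\rho^2)\partial_z^{-1}$, every factor except the outermost $\partial_z$ and the trailing $\partial_z^{-1}$ commutes with all three reflections, so the whole operator inherits the commutation type of $\partial_z$: it commutes with $\mathcal{R}_r$ and anti-commutes with $\mathcal{R}_i$ and $\mathcal{R}_o$, which is parts (1)--(2). For part (3) I would argue as at the end of Proposition~\ref{p:1}: if $\mathcal{Q}^\mathfrak{g}_{\varepsilon}(\rho)\varphi=\lambda\varphi$ then $\mathcal{R}_r\varphi$, $\mathcal{R}_i\varphi$, $\mathcal{R}_o\varphi$ are eigenfunctions for $\overline{\lambda}$, $-\overline{\lambda}$, $-\lambda$ respectively, so the $L^2_0(\T)$-spectrum is symmetric under reflection through the real axis, the imaginary axis, and the origin.

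The only step needing genuine care is the behavior of the nonlocal operator $\mathcal{J}_k$ under the anti-linear reflections $\mathcal{R}_r$ and $\mathcal{R}_i$: on the Fourier side one must verify $\overline{\mathcal{J}_k f}=\mathcal{J}_k\overline{f}$ and that $\mathcal{J}_k$ commutes with the parity map $z\mapsto-z$, and these hold precisely because $\jmath(-kn)=\overline{\jmath(kn)}$, i.e.\ because the symbol is real and even — this is the sole use of (J1) here. Everything else is the same routine computation already carried out for Proposition~\ref{p:1}.
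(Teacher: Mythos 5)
Your proposal is correct and follows exactly the route the paper intends: the paper's own proof of this proposition is just the one-line remark that it is ``similar to the proof of Proposition~\ref{p:1},'' and your argument is precisely that adaptation, with the only genuinely new checks (that the Fourier multiplier $\mathcal J_k$ with real even symbol and the real even potential $\eta_\mathfrak{g}^2$ commute with all three reflections, so the commutation type is carried entirely by $\partial_z$ and $\partial_z^{-1}$) carried out correctly. You in fact supply the details the paper omits; no gap.
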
 
\begin{proof}
The proof is similar to the proof of Proposition \ref{p:1}
\end{proof}
\subsubsection{\underline{Localized or bounded perturbations}}\label{ss:g12}
In $L^2(\R)$ or $C_b(\R)$, the operator $\mathcal G^\mathfrak{g}_\varepsilon(\lambda, \rho)$ now have continuous spectrum. We can use the Floquet theory such that all solutions of \eqref{e:opt1} in $L^2(\R)$ or $C_b(\R)$ are of the form $\varphi(z)=e^{i\xi z}\Tilde{\varphi}(z)$ where $\xi\in\left(-1/2,1/2\right]$ is the Floquet exponent and $\Tilde{\varphi}$ is a $2\pi$-periodic function. We can deduce that the study of the invertibility of $\mathcal G^\mathfrak{g}_\varepsilon(\lambda,\rho)$ in $L^2(\R)$ or $C_b(\R)$ is equivalent to the invertibility of the linear operators $\mathcal G^\mathfrak{g}_{\varepsilon,\xi}(\lambda,\rho)$ in $L^2(\mathbb{T})$ with domain $H^{\mathfrak{b}+2}(\mathbb{T})$, for all $\xi\in\left(-1/2,1/2\right]$, where
\begin{align*}\label{E:bloch}
\mathcal G^\mathfrak{g}_{\varepsilon,\xi}(\lambda,\rho) = \lambda(\partial_z+i\xi)- k^2(\partial_z+i\xi)^2(c-\J_k-2\alpha_1\eta_\mathfrak{g}-3\alpha_2\eta^2_\mathfrak{g})-\rho^2-\g
\end{align*}
Also, $\mathcal G^\mathfrak{g}_{\varepsilon,\xi}(\lambda,\rho)$ is not invertible in $L^2(\mathbb{T})$ if and only if zero is an eigenvalue of $\mathcal G^\mathfrak{g}_{\varepsilon,\xi}(\lambda,\rho)$. For a $\Psi\in L^2(\mathbb{T})$, $\mathcal G_{\varepsilon,\xi}(\lambda,\rho)\Psi=0$ if and only if $\mathcal Q^\mathfrak{g}_{\varepsilon}(\rho,\xi)\Psi=\lambda \Psi$, where
\begin{equation}\label{e:op9}
	\mathcal{Q}^\mathfrak{g}_\varepsilon(\rho,\xi):=k^2(\partial_z+i\xi)(c-\J_k-2\alpha_1\eta_\mathfrak{g}-3\alpha_2\eta^2_\mathfrak{g})+(\g+\rho^2)(\partial_ z+i\xi)^{-1}.
\end{equation}
Therefore, the operator $\mathcal G^\mathfrak{g}_{\varepsilon,\xi}(\lambda,\rho)$ is not  invertible in $L^2(\mathbb{T})$ for some $\lambda\in \C$ and $\xi\neq 0$ if and only if  $\lambda\in\operatorname{spec}_{L^2(\mathbb{T})}(\mathcal Q^\mathfrak{g}_{\varepsilon}(\rho,\xi))$, $L^2(\mathbb{T})$-spectrum of the operator. This is straight forward to observe that $\operatorname{spec}_{L^2(\mathbb{T})}(\mathcal Q^\mathfrak{g}_{\varepsilon}(\rho,\xi))$ is not symmetric with respect to the reflections through the real axis and the origin, rather it exhibit following properties.
\begin{proposition}\label{p:2}
The operator $\mathcal Q^\mathfrak{g}_{\varepsilon}(\rho,\xi)$ possess following properties.
\begin{enumerate}
    \item The operator $\mathcal Q^\mathfrak{g}_{\varepsilon}(\rho,-\xi)$ commutes with the reflection through the real axis.
    \item The operator $\mathcal Q^\mathfrak{g}_{\varepsilon}(\rho,\xi)$ anti-commutes with the reflection through the imaginary axis.
    \item The operator $\mathcal Q^\mathfrak{g}_{\varepsilon}(\rho,-\xi)$ anti-commutes with the reflection through the origin.
    \item The spectrum of $\mathcal Q^\mathfrak{g}_{\varepsilon}(\rho,\xi)$ is symmetric with respect to the reflections through the imaginary axis.
    \item The spectrum of $\mathcal Q^\mathfrak{g}_{\varepsilon}(\rho,-\xi)$ is symmetric with respect to the reflections through the real axis and the origin.
    \end{enumerate}
\end{proposition}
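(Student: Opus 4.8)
The proof is the verbatim analogue of that of Proposition~\ref{p:02} (hence of Proposition~\ref{p:1}): I would work with the three reflections $\mathcal R_r,\mathcal R_i,\mathcal R_o$ of \eqref{e:ref} and track how each summand of
\[
\mathcal Q^{\mathfrak{g}}_\varepsilon(\rho,\xi)=k^2(\partial_z+i\xi)\bigl(c-\J_k-2\alpha_1\eta_\mathfrak{g}-3\alpha_2\eta_\mathfrak{g}^2\bigr)+(\g+\rho^2)(\partial_z+i\xi)^{-1}
\]
intertwines with them. First I would record the elementary Bloch-derivative identities $\mathcal R_r(\partial_z+i\xi)=(\partial_z-i\xi)\mathcal R_r$, $\mathcal R_o(\partial_z+i\xi)=-(\partial_z-i\xi)\mathcal R_o$, and $\mathcal R_i(\partial_z+i\xi)=-(\partial_z+i\xi)\mathcal R_i$, together with the same relations for $(\partial_z+i\xi)^{-1}$ (inverting an (anti)commutation relation preserves it); recall that $\mathcal R_r,\mathcal R_i$ are antilinear while $\mathcal R_o$ is linear.

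The only point genuinely new relative to Proposition~\ref{p:1} — where the scalar factor was a differential operator — is that $\J_k$ is nonlocal. Here I would invoke Hypothesis~J1: since $\jmath$ is real-valued, $\J_k$ commutes with complex conjugation $\mathcal R_r$; since $\jmath$ is even, the symbol $n\mapsto\jmath(kn)$ is even, so $\J_k$ commutes with the spatial reflection $\mathcal R_o$; hence it also commutes with $\mathcal R_i=\mathcal R_r\mathcal R_o$. A one-line Fourier-coefficient check ($\widehat{\mathcal R_r\psi}(n)=\overline{\widehat\psi(-n)}$, $\widehat{\mathcal R_o\psi}(n)=\widehat\psi(-n)$) makes this precise. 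Likewise, since $\eta_\mathfrak{g}$ is real-valued and even by Theorem~\ref{T:sol}, multiplication by $\eta_\mathfrak{g}$ and by $\eta_\mathfrak{g}^2$ commutes with all three reflections; consequently the whole scalar factor $c-\J_k-2\alpha_1\eta_\mathfrak{g}-3\alpha_2\eta_\mathfrak{g}^2$ commutes with $\mathcal R_r,\mathcal R_i,\mathcal R_o$.

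Combining these facts, properties (1)--(3) follow by inspection: $\mathcal R_r\mathcal Q^{\mathfrak{g}}_\varepsilon(\rho,\xi)=\mathcal Q^{\mathfrak{g}}_\varepsilon(\rho,-\xi)\mathcal R_r$ (conjugation flips $+i\xi$ to $-i\xi$), so $\mathcal Q^{\mathfrak{g}}_\varepsilon(\rho,-\xi)$ commutes with $\mathcal R_r$; $\mathcal R_o\mathcal Q^{\mathfrak{g}}_\varepsilon(\rho,\xi)=-\mathcal Q^{\mathfrak{g}}_\varepsilon(\rho,-\xi)\mathcal R_o$ gives the anticommutation in (3); and $\mathcal R_i$ anticommutes with $(\partial_z+i\xi)$ without changing $\xi$, so $\mathcal R_i\mathcal Q^{\mathfrak{g}}_\varepsilon(\rho,\xi)=-\mathcal Q^{\mathfrak{g}}_\varepsilon(\rho,\xi)\mathcal R_i$, which is (2). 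For the spectral symmetries, if $\mathcal Q^{\mathfrak{g}}_\varepsilon(\rho,\xi)\varphi=\lambda\varphi$ then (2) and the antilinearity of $\mathcal R_i$ yield $\mathcal Q^{\mathfrak{g}}_\varepsilon(\rho,\xi)(\mathcal R_i\varphi)=-\overline\lambda(\mathcal R_i\varphi)$, so $\operatorname{spec}$ is invariant under $\lambda\mapsto-\overline\lambda$, i.e. (4); and applying (1) and (3) to an eigenpair $(\lambda,\varphi)$ of $\mathcal Q^{\mathfrak{g}}_\varepsilon(\rho,-\xi)$ produces the eigenpairs $(\overline\lambda,\mathcal R_r\varphi)$ and $(-\lambda,\mathcal R_o\varphi)$, giving (5).

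I do not expect any real obstacle: the statement mirrors Proposition~\ref{p:02} exactly, and the sole additional ingredient is that the two operators present in the generalized model but absent from \eqref{e:op7} — the Fourier multiplier $\J_k$ and the cubic potential $\eta_\mathfrak{g}^2$ — respect the symmetries. The one place requiring care is $\J_k$: one needs \emph{both} halves of Hypothesis~J1 ($\jmath$ real-valued for the $\mathcal R_r$ relation, $\jmath$ even for the $\mathcal R_o$ relation), since dropping either would break one of the intertwining identities. Everything else is the same bookkeeping as in Propositions~\ref{p:1} and~\ref{p:02}.
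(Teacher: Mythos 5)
Your proposal is correct and follows exactly the route the paper intends: the paper's own proof of Proposition~\ref{p:2} consists of the single line ``The proof is similar to the proof of Proposition~\ref{p:1},'' and your argument is precisely that similarity made explicit, with the only genuinely new ingredients (the Fourier multiplier $\J_k$ respecting the reflections via Hypothesis~J1, and the even real potential $\eta_\mathfrak{g}^2$) correctly identified and verified. No gaps.
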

\begin{proof}
The proof is similar to the proof of Proposition \ref{p:1}
\end{proof}
\subsection{Characterization of unstable spectrum}
\subsubsection{\underline{Periodic perturbations}}\label{ss:g1}
A straightforward calculation reveals that 
\begin{align}\label{E:spec01}
    \mathcal Q^\mathfrak{g}_0(\rho)e^{inz} = i\Omega^\mathfrak{g}_{n,\rho}e^{inz}\quad \text{for all}\quad n \in \mathbb{Z}\setminus \{0\}
\end{align}
where
\begin{align}\label{E:omega01}
    \Omega^\mathfrak{g}_{n,\rho} = \g\left(n-\dfrac{1}{n}\right)+k^2n(\jmath(k)-\jmath(kn))-\dfrac{\rho^2}{n}.
\end{align}
Let eigenvalues $i\Omega_{n,\rho}$ and $i\Omega_{n+\Theta,\rho}$, $n \neq m$, collide at $\rho=\rho_c>0$. From \eqref{eq:krein}, Krein signatures $\chi_n$ and $\chi_{n+\Theta}$ are opposite at $\rho=\rho_c$ when $n(n+\Theta) < 0$, i.e. $n$ and $n+\Theta$ should be of opposite parity. 
\begin{lemma}
For a fixed $\g>0$ and each $\Theta\in\N$, the potentially unstable collisions between the eigenvalues $i\Omega^\mathfrak{g}_{n,\rho}$ and $i\Omega^\mathfrak{g}_{n+\Theta,\rho}$ are
\begin{enumerate}
    \item $\rho=0$, $\{n,n+\Theta\}=\{-1,1\}$ for all $\jmath$.
    \item $\rho\neq0$, $n\in(-\Theta,0)$, $\Theta\geq3$ for all $\jmath$.
\end{enumerate}
All these collisions occur for $k$ satisfying the relation $-\g\mathscr{J}_\mathfrak{g}(n,\Theta)+\dfrac{k^2}{\Theta} \mathscr{U}_\mathfrak{g}(n,\Theta)>0$, where $\mathscr{J}_\mathfrak{g}(n,\Theta)=n(n+\Theta)+1$ and $\mathscr{U}_\mathfrak{g}(n,\Theta)=n(n+\Theta)(n(\jmath(k)-\jmath(kn))-(n+\Theta)(\jmath(k)-\jmath(k(n+\Theta)))$. Also, all collisions take place away from the origin in the complex plane except when $\Theta$ is even and $n=-\Theta/2$ in which case eigenvalues $\Omega_{n,\rho}$ and $\Omega_{-n,\rho}$ collide at the origin.
\end{lemma}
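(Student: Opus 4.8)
The plan is to follow, step for step, the treatment of the RMKP operator in Lemmas~\ref{l:c1} and~\ref{l:k1}, replacing $\mathcal Q_\varepsilon(\rho)$ by $\mathcal Q^\mathfrak{g}_\varepsilon(\rho)$ throughout. The set-up is the same: $\mathcal Q^\mathfrak{g}_\varepsilon(\rho)-\mathcal Q^\mathfrak{g}_0(\rho)$ is a compact perturbation whose operator norm tends to $0$ as $\varepsilon\to0$, so $\operatorname{spec}_{L^2_0(\T)}(\mathcal Q^\mathfrak{g}_\varepsilon(\rho))$ stays uniformly close to $\operatorname{spec}_{L^2_0(\T)}(\mathcal Q^\mathfrak{g}_0(\rho))=\{i\Omega^\mathfrak{g}_{n,\rho}:n\in\Z\setminus\{0\}\}$, and by the quadrafold symmetry of Proposition~\ref{p:3} any eigenvalue of $\mathcal Q^\mathfrak{g}_\varepsilon(\rho)$ with nonzero real part must bifurcate, for $|\varepsilon|$ small, from a collision on the imaginary axis of two eigenvalues $i\Omega^\mathfrak{g}_{n,\rho}$ and $i\Omega^\mathfrak{g}_{n+\Theta,\rho}$, $n,n+\Theta\in\Z\setminus\{0\}$.

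First I would solve the collision condition $\Omega^\mathfrak{g}_{n,\rho_c}=\Omega^\mathfrak{g}_{n+\Theta,\rho_c}$. Substituting \eqref{E:omega01}, subtracting, multiplying through by $n(n+\Theta)/\Theta$, and using that $\jmath$ is even produces the closed form $\rho_c^2=-\g\,\mathscr{J}_\mathfrak{g}(n,\Theta)+(k^2/\Theta)\,\mathscr{U}_\mathfrak{g}(n,\Theta)$, with $\mathscr{J}_\mathfrak{g}$ and $\mathscr{U}_\mathfrak{g}$ as in the statement. A genuine collision thus exists exactly when the right-hand side is $\geq0$: strictly positive when $\rho_c\neq0$, and equal to $0$ when $\rho_c=0$. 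This is the stated admissibility condition on $k$.

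Next I would impose the Krein-signature obstruction of \cite{MacKay1986STABILITYWAVES}. Factoring $\mathcal Q^\mathfrak{g}_\varepsilon(\rho)=\A\,\B^\mathfrak{g}_{\varepsilon,\rho}$ with $\A=\partial_z$ skew-adjoint and $\B^\mathfrak{g}_{\varepsilon,\rho}=k^2(c-\J_k-2\alpha_1\eta_\mathfrak{g}-3\alpha_2\eta_\mathfrak{g}^2)+(\g+\rho^2)\partial_z^{-2}$ self-adjoint, the computation underlying \eqref{eq:krein} gives $\chi_{n,\rho}=\operatorname{sgn}(\Omega^\mathfrak{g}_{n,\rho}/n)$, so at a collision with common nonzero value the two signatures are opposite iff $n(n+\Theta)<0$, i.e. iff $-\Theta<n<0$; for $n>0$ or $n<-\Theta$ the signatures agree and such collisions cannot shed eigenvalues off the imaginary axis, hence are discarded. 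Integer solutions of $-\Theta<n<0$ exist only for $\Theta\geq2$, so I would split cases: for $\Theta=2$ only $n=-1$ remains, and there $\mathscr{J}_\mathfrak{g}(-1,2)=0$ while (again because $\jmath$ is even) $\mathscr{U}_\mathfrak{g}(-1,2)=0$, forcing $\rho_c=0$ with colliding pair $\{-1,1\}$; for $\Theta\geq3$ every $n\in\{-1,\dots,-\Theta+1\}$ survives, at $\rho_c\neq0$ once the above inequality is strict. This yields items (1) and (2). (For the pair $\{-1,1\}$ the common value is $0$, so the signature test is vacuous; there one records $\{-1,1\}$ as potentially unstable and defers to the $2\times2$ Lyapunov--Schmidt reduction of the long-wavelength subsection to decide whether the double zero actually splits.)

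For the location of the collisions I would observe that $\Omega^\mathfrak{g}_{n,\rho}=0$ is equivalent to $\rho^2=\g(n^2-1)+k^2n^2(\jmath(k)-\jmath(kn))$, an expression unchanged under $n\mapsto-n$ since $\jmath$ is even; hence $i\Omega^\mathfrak{g}_{n,\rho}$ and $i\Omega^\mathfrak{g}_{-n,\rho}$ hit the origin for the same $\rho_c$, so a collision at the origin occurs exactly when $n+\Theta=-n$, i.e. $\Theta$ even and $n=-\Theta/2$, and all other collisions stay off the origin. I expect the only delicate point to be the description of the admissible $k$: in Lemma~\ref{l:c1} the polynomials $\mathscr{J}$ and $\mathscr{U}$ factored into products of shifted squares in $n$, so their signs could be read off directly, whereas here $\mathscr{U}_\mathfrak{g}(n,\Theta)$ involves the unspecified, merely monotone symbol $\jmath$ and does not factor---so there is nothing sharper to say than the inequality $-\g\mathscr{J}_\mathfrak{g}(n,\Theta)+(k^2/\Theta)\mathscr{U}_\mathfrak{g}(n,\Theta)>0$, which is exactly how the lemma is phrased, with the concrete $k$-thresholds postponed to the individual example equations.
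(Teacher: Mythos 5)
Your proposal is correct and follows essentially the same route as the paper: derive the collision condition $\rho^2=-\g\mathscr{J}_\mathfrak{g}(n,\Theta)+(k^2/\Theta)\mathscr{U}_\mathfrak{g}(n,\Theta)$, impose the Krein-signature restriction $n(n+\Theta)<0$, observe that for $\Theta=2$ evenness of $\jmath$ forces $\mathscr{J}_\mathfrak{g}(-1,2)=\mathscr{U}_\mathfrak{g}(-1,2)=0$ and hence $\rho_c=0$ with colliding pair $\{-1,1\}$, and locate the origin collisions at $\Theta$ even, $n=-\Theta/2$. The only (cosmetic) difference is that the paper re-parametrizes the opposite-sign pairs as $(n,-t)$ with $n,t\in\N$ before writing the collision condition, whereas you keep the $(n,n+\Theta)$ labels throughout; you also make explicit the vacuity of the signature test for the $\{-1,1\}$ pair, a point the paper leaves implicit.
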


\begin{proof}
A direct calculation shows that $i\Omega^\mathfrak{g}_{n,\rho}$ and $i\Omega^\mathfrak{g}_{-t,\rho}$ collide when
\begin{equation}\label{e:cc1}
    \rho^2=\g \mathscr{J}_\mathfrak{g}(n,t)+k^2 \mathscr{U}_\mathfrak{g}(n,t)
\end{equation}
where,
\[\mathscr{J}_\mathfrak{g}(n,t)=nt-1\quad\text{and}\quad \mathscr{U}_\mathfrak{g}(n,t)=\dfrac{nt}{n+t}(n(\jmath(k)-\jmath(kn))+t(\jmath(k)-\jmath(kt)))\]
When $\rho=0$, \eqref{e:cc1} holds only for $n=t=1$.
$\mathscr{J}_\mathfrak{g}(1,1)=0$ and $\mathscr{J}_\mathfrak{g}(n,t)>0$ for all $n,t\in\N\setminus\{1\}$. $\mathscr{U}_\mathfrak{g}(1,1)=0$. $\mathscr{U}_\mathfrak{g}(n,t)<0$ when $\jmath$ is monotonically increasing and $\mathscr{U}_\mathfrak{g}(n,t)>0$ when $\jmath$ is monotonically decreasing. Therefore, for every $\jmath$, there exist $\rho\neq0$ satisfying \eqref{e:cc1}. If we seek the potentially unstable collisions between the eigenvalues $i\Omega^\mathfrak{g}_{n,\rho}$ and $i\Omega^\mathfrak{g}_{n+\Theta,\rho}$ for each $\Theta\in\N$, then value of $n$ should be $n\in(-\Theta,0)$ for every $\jmath$.
Observe that $\Omega^\mathfrak{g}_{n,\rho_c}=\Omega^\mathfrak{g}_{-n,\rho_c}=0$ for $\rho_c^2=|\g(n^2-1)+k^2n^2(\jmath(k)-\jmath(kn))|$. Therefore, $\Omega^\mathfrak{g}_{n,\rho_c}$ and $\Omega^\mathfrak{g}_{n+\Theta,\rho_c}$ collide at the origin when $\Theta$ is even and $n=-\Theta/2$. All other collisions are away from origin.

\end{proof}
\subsubsection{\underline{Localized or bounded perturbations}}\label{ss:g2} A simple calculation yields the following
\begin{align}\label{e:spec12}
    \mathcal Q^\mathfrak{g}_{0}(\rho,\xi)e^{inz} = i\Omega^\mathfrak{g}_{n,\rho,\xi}e^{inz}\quad \text{for all}\quad n \in \mathbb{Z}.
\end{align}
where 
\begin{align}\label{E:omega}
    \Omega^\mathfrak{g}_{n,\rho,\xi} = \g\left(n+\xi-\dfrac{1}{n+\xi}\right)+k^2(n+\xi)(\jmath(k)-\jmath(k(n+\xi)))-\dfrac{\rho^2}{n+\xi}.
\end{align}

Therefore, the $L^2(\T)$-spectrum of $\mathcal Q_0(\rho,\xi)$ is given by
\begin{equation}\label{e:spec01}
    \operatorname{spec}_{L^2(\mathbb{T})}(\mathcal Q_0(\rho,\xi))=\{i\Omega^\mathfrak{g}_{n,\rho,\xi}; n \in \Z, \xi\in(0,1/2] \}
\end{equation}
Let eigenvalues $i\Omega_{n,\rho,\xi}$ and $i\Omega_{n+\Theta,\rho,\xi}$, $\Theta\in\N$, collide at $\rho=\rho_c>0$. From \eqref{e:krsig}, Krein signatures $\chi_n$ and $\chi_{n+\Theta}$ are opposite at $\rho=\rho_c$ when $(n+\xi)(n+\Theta+\xi) < 0$.
\begin{lemma}
For a fixed $\g>0$ and each $\Theta\in\N$, the potentially unstable collisions between the eigenvalues $i\Omega^\mathfrak{g}_{n,\rho,\xi}$ and $i\Omega^\mathfrak{g}_{n+\Theta,\rho,\xi}$ are all $n\in[-\Theta,-1]$ for each $\Theta\in\N\setminus\{2\}$ when $\jmath$ is increasing, and all $n\in[-\Theta,-1]$ for each $\Theta\in\N\setminus\{1\}$ when $\jmath$ is decreasing. All these collisions occur for $k$ satisfying the relation $-\g \mathscr{F}_\mathfrak{g}(n,\xi,\Theta)+ k^2\mathscr{V}_\mathfrak{g}(n,\xi,\Theta)>0$, where $\mathscr{F}_\mathfrak{g}(n,\xi,\Theta)=(n+\xi)(n+\xi+\Theta)+1 $ and $\mathscr{V}_\mathfrak{g}(n,\xi,\Theta)=\dfrac{(n+\xi)(n+\xi+\Theta)}{\Theta}((n+\xi)(\jmath(k)-\jmath(k(n+\xi)))-(n+\xi+\Theta)(\jmath(k)-\jmath(k(n+\xi+\Theta))))$. All collisions take place away from the origin in the complex plane except when $\Theta$ is odd and $n=-(\Theta+1)/2$ in which case eigenvalues $\Omega_{n,\rho,\Theta}$ and $\Omega_{-n-1,\rho,\Theta}$ collide at the origin for $\rho=\rho_c(1/2)$. Table~\ref{tab:col_nonp} provides range of values of $\xi$ for all such collisions.
\begin{table}[ht]
    \centering
    \begin{tabular}{|c|c|c|c|c|}
    \hline
  $\jmath$ & $\Theta$ & $n$ & $\xi$ \\ \hline
        increasing & $\mathbb{N}\setminus\{2\}$ &$(-\Theta,-1]$ & $(0,1/2]$\\ \hline
        increasing & $1$ &$-\Theta$  & $(0,1/2]$\\ \hline
        increasing & $\geq 3$ &$-\Theta$ & $\left((\Theta-\sqrt{\Theta^2-4})/2,1/2\right] $\\ \hline
        decreasing  & $\N\setminus\{1\}$ & $[-\Theta,-1]$ & $(0,1/2]$\\ \hline
    \end{tabular}
\caption{For a given nature of $\jmath$ and value(s) of $\Theta$, each row lists value(s) of $n$ for which collisions takes place along with the value(s) of $\xi$. }
    \label{tab:col_nonp1}
\end{table}
\end{lemma}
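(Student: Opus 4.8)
The plan is to transplant to the pseudo-differential setting the Fourier computation and Krein-signature bookkeeping already carried out for the RMKP operator in Lemma~\ref{l:c2}, replacing the explicit symbol $\b k^4$ there by the general symbol $k^2\jmath$ and using only Hypotheses~J1--J3. First I would equate the two eigenvalues in \eqref{e:spec12}--\eqref{E:omega}: writing $p:=n+\xi$, $q:=n+\Theta+\xi=p+\Theta$ and introducing the function $g(s):=s(\jmath(k)-\jmath(ks))$, which is odd because $\jmath$ is even (J1), the relation $\Omega^\mathfrak{g}_{n,\rho,\xi}=\Omega^\mathfrak{g}_{n+\Theta,\rho,\xi}$ becomes, after clearing denominators and using $p-q=-\Theta$ and $1/p-1/q=\Theta/(pq)$,
\[
\rho^2=-\g(pq+1)+\frac{k^2\,pq\,(g(p)-g(q))}{\Theta}=-\g\,\mathscr{F}_\mathfrak{g}(n,\xi,\Theta)+k^2\,\mathscr{V}_\mathfrak{g}(n,\xi,\Theta),
\]
which is the asserted identity once $g(p)-g(q)$ is rewritten with $\jmath$ evaluated at $k|p|$ and $k|q|$ via J1. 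A real $\rho$ exists precisely when the right-hand side is $\geq0$, and a nonzero one when it is $>0$; this is the condition under which the pair collides.

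Next I would bring in the Krein signatures. By \eqref{e:krsig}, the colliding eigenvalues carry opposite signatures exactly when $pq=(n+\xi)(n+\Theta+\xi)<0$, which, since $\xi\in(0,1/2]$, forces $n\in\{-\Theta,-\Theta+1,\dots,-1\}$; the Krein criterion \cite{MacKay1986STABILITYWAVES} then restricts the potentially unstable nodes to this set, and what remains is to decide for which of them the right-hand side above is positive. Here $pq/\Theta<0$ on the admissible range, while $\mathscr{F}_\mathfrak{g}(n,\xi,\Theta)=(n+\xi+\Theta/2)^2-(\Theta^2-4)/4$; completing the square exactly as in the proof of Lemma~\ref{l:c2} gives $\mathscr{F}_\mathfrak{g}>0$ except on the boundary node $n=-\Theta$, where $\mathscr{F}_\mathfrak{g}(-\Theta,\xi,\Theta)=\xi^2-\Theta\xi+1$ vanishes at $\xi=(\Theta-\sqrt{\Theta^2-4})/2$ (which lies in $(0,1/2)$ for $\Theta\geq3$) and is negative beyond it -- this is exactly what produces the $\xi$-dependent ranges in the table. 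For the sign of $g(p)-g(q)$ on $n\in[-\Theta,-1]$, monotonicity (J3) and evenness (J1) give $g(p)-g(q)>0$ when $\jmath$ is increasing and $<0$ when $\jmath$ is decreasing (for $\Theta\geq3$, resp.\ $\Theta\geq2$); multiplied by the negative factor $pq/\Theta$ this makes $k^2\mathscr{V}_\mathfrak{g}$ large and positive on the claimed $k$-intervals, so a real $\rho$ exists.

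The exceptional cases are $\Theta=2$ with $\jmath$ increasing and $\Theta=1$ with $\jmath$ decreasing: there one colliding mode is the fundamental (or its conjugate), $g(p)-g(q)$ collapses to $(1-\xi)\jmath(k(1-\xi))+(1+\xi)\jmath(k(1+\xi))-2\jmath(k)$, resp.\ to $(1-\xi)\jmath(k(1-\xi))+\xi\,\jmath(k\xi)-\jmath(k)$, and a one-variable convexity/star-shapedness estimate for $s\mapsto s\,\jmath(ks)$ forces $\mathscr{V}_\mathfrak{g}\leq0$; together with $\mathscr{F}_\mathfrak{g}>0$ this makes the right-hand side of the collision identity negative, so there is no real $\rho$ and no potentially unstable collision, which accounts for the omissions. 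Finally, for the origin claim, $\Omega^\mathfrak{g}_{n,\rho,\xi}=0$ is equivalent to $\rho^2=\g((n+\xi)^2-1)+k^2(n+\xi)^2(\jmath(k)-\jmath(k(n+\xi)))$, and two modes $p,q$ can share a zero eigenvalue at one $\rho$ only when $p=-q$, i.e.\ $n+\xi=-\Theta/2$, which for integer $n$ and $\xi\in(0,1/2]$ happens precisely when $\Theta$ is odd, $n=-(\Theta+1)/2$, $\xi=1/2$; in that configuration $g$ odd and $\jmath$ even give $\Omega^\mathfrak{g}_{n}=-\Omega^\mathfrak{g}_{n+\Theta}$, forcing the collision to the origin, while all other collisions stay off it. Collecting these facts reproduces the tabulated nodes and $\xi$-ranges.

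I expect the main obstacle to be the sign analysis just described, and in particular the two exceptional values $\Theta\in\{1,2\}$. In the RMKP proof $\mathscr{V}$ factors into explicitly-signed quadratics in $n+\xi$; in the general case the sign of $g(p)-g(q)$ must instead be extracted from the qualitative hypotheses on $\jmath$, and ruling out a collision for $\Theta=2$ (increasing) or $\Theta=1$ (decreasing) requires a genuine convexity-type inequality for $s\mapsto s\,\jmath(ks)$ rather than plain monotonicity -- the place where J3, supplemented by the special shape of these low-order collisions, has to do real work.
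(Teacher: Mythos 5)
Your proposal follows essentially the same route as the paper's proof: equate $\Omega^\mathfrak{g}_{n,\rho,\xi}$ and $\Omega^\mathfrak{g}_{n+\Theta,\rho,\xi}$ to get $\rho^2=-\g\mathscr{F}_\mathfrak{g}+k^2\mathscr{V}_\mathfrak{g}$, use the Krein condition $(n+\xi)(n+\xi+\Theta)<0$ to restrict to $n\in[-\Theta,-1]$, determine the sign of $\mathscr{F}_\mathfrak{g}$ by completing the square (yielding the $\xi$-threshold $(\Theta-\sqrt{\Theta^2-4})/2$ at $n=-\Theta$), read off the sign of $\mathscr{V}_\mathfrak{g}$ from the monotonicity of $\jmath$, and locate the origin collisions at $n+\xi=-(n+\Theta+\xi)$, i.e.\ $\xi=1/2$, $n=-(\Theta+1)/2$.

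Two caveats. First, there is a sign slip in your middle paragraph: for $\jmath$ increasing and $\Theta\geq3$ you correctly get $g(p)-g(q)>0$, but multiplying by $pq/\Theta<0$ makes $\mathscr{V}_\mathfrak{g}$ \emph{negative}, not ``large and positive''; in that regime the positivity of $\rho^2$ comes from $-\g\mathscr{F}_\mathfrak{g}>0$ dominating on a bounded $k$-interval, consistent with the $k\in(0,\max k_\xi]$ rows of the table, not from the $k^2\mathscr{V}_\mathfrak{g}$ term. Second, the step you flag as needing ``real work'' --- fixing the sign of $\mathscr{V}_\mathfrak{g}$ when the two colliding modes sit on opposite sides of the fundamental, e.g.\ $\Theta=2$ increasing or $\Theta=1$ decreasing, where one needs a midpoint convexity/concavity property of $s\mapsto s\,\jmath(ks)$ rather than monotonicity --- is exactly the step the paper does not justify either: its proof simply lists the signs of $\mathscr{V}_\mathfrak{g}(n,\xi,1)$ and $\mathscr{V}_\mathfrak{g}(n,\xi,\geq2)$ for each monotonicity class and says ``the proof follows trivially from here.'' (In fact your diagnosis is slightly off in one direction: for $\Theta=1$ decreasing both arguments $|n+\xi|$ and $|n+\xi+1|$ lie below $1$, so monotonicity alone does settle that case; the genuinely delicate ones are $\Theta=2$ for either monotonicity and the $n=-1$, $n=-\Theta$ end modes for $\Theta\geq3$.) So your write-up matches the paper's argument step for step and, if anything, is more candid about where the sign analysis is incomplete under Hypothesis~\ref{h:m} alone.
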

\begin{proof}
The collision condition for eigenvalues $i\Omega^\mathfrak{g}_{n,\rho,\xi}$ and $i\Omega^\mathfrak{g}_{n+\Theta,\rho,\xi}$ for $\Theta\in\N$ is
\begin{equation}\label{e:gcc11}
    \rho^2=-\g \mathscr{F}_\mathfrak{g}(n,\xi,\Theta)+ k^2\mathscr{V}_\mathfrak{g}(n+\Theta,\xi,\Theta)
\end{equation}
where,
\[\mathscr{F}_\mathfrak{g}(n,\xi,\Theta)=(n+\xi)(n+\xi+\Theta)+1\quad\text{and}\quad \]\[\mathscr{V}_\mathfrak{g}(n,\xi,\Theta)=\dfrac{(n+\xi)(n+\xi+\Theta)}{\Theta}((n+\xi)(\jmath(k)-\jmath(k(n+\xi)))-(n+\xi+\Theta)(\jmath(k)-\jmath(k(n+\xi+\Theta))))\]
Since we intend to find only potentially unstable eigenvalues, therefore $(n+\xi)(n+\xi+\Theta)<0$ which implies $n\in[-\Theta,-1]$. 

   The function $\mathscr{F}_\mathfrak{g}(n,\xi,1),\mathscr{F}_\mathfrak{g}(n,\xi,2)>0$ for all $n\in[-\Theta,-1]$; while $\mathscr{F}_\mathfrak{g}(n,\xi,\geq3)$ is positive only for $n=-\Theta$ for all $\xi\in\left(0,\dfrac{\Theta-\sqrt{\Theta^2-4}}{2}\right)$; and negative for all
\begin{enumerate}
    \item $n\in[-\Theta+1,-1]$ for all $\Theta\in(0,1/2]$
    \item $n=-\Theta$ for all $\xi\in\left(\dfrac{\Theta-\sqrt{\Theta^2-4}}{2},\dfrac{1}{2}\right]$
\end{enumerate}
Also The sign of the function $\mathscr{V}_\mathfrak{g}(n,\xi,\Theta)$ is
\begin{enumerate}
\item When $\jmath$ is monotonically increasing
\begin{enumerate}
    \item The function $\mathscr{V}_\mathfrak{g}(n,\xi,1)>0$ for all $n\in[-\Theta,-1]$ and $\xi\in(0,1/2]$
    \item The function $\mathscr{V}_\mathfrak{g}(n,\xi,\geq2)<0$ for all $n\in[-\Theta,-1]$ and $\xi\in(0,1/2]$.
\end{enumerate}
\item When $\jmath$ is monotonically decreasing
\begin{enumerate}
    \item The function $\mathscr{V}_\mathfrak{g}(n,\xi,1)<0$ for all $n\in[-\Theta,-1]$ and $\xi\in(0,1/2]$
    \item The function $\mathscr{V}_\mathfrak{g}(n,\xi,\geq2)>0$ for all $n\in[-\Theta,-1]$ and $\xi\in(0,1/2]$. 
\end{enumerate}
\end{enumerate}
The proof follows trivially from here.
Note that $\Omega^\mathfrak{g}_{n,\rho,\xi}=0$ at $\rho^2=|\g((n+\xi)^2-1)+k^2(n+\xi)^2(\jmath(k)-\jmath(k(n+\xi)))|$. $\Omega^\mathfrak{g}_{n,\rho_c,\xi}=\Omega^\mathfrak{g}_{n+\Theta,\rho_c,\xi}$=0 for a fixed $\rho_c$ is possible only for $\Theta=-2n-1$, $\xi=1/2$. Therefore, $\Omega^\mathfrak{g}_{n,\rho_c,\xi}$ and $\Omega^\mathfrak{g}_{n+\Theta,\rho_c,\xi}$ collide at the origin for $n=-(\Theta+1)/2$, for all $n\in[-\Theta,-1]\cap \mathbb{Z}$, $\xi=1/2$ and $\rho_c^2=|\g((n+1/2)^2-1)+k^2(n+1/2)^2(\jmath(k)-\jmath(k(n+1/2)))|$. All other collisions are away from origin.

\end{proof}
\subsection{Long wavelength transverse perturbations}
We start the further analysis with the values of $\rho$ sufficiently close to origin, that is, $|\rho|\leq\rho_0
$ for some $\rho_0>0$. Table \ref{tab:col} work as it is here too.  
For $\varepsilon$ and $\rho$ sufficiently small, $\{\Omega^\mathfrak{g}_{-1,\rho},\Omega^\mathfrak{g}_{1,\rho}\}$ are pair of eigenvalues bifurcating continuously from $\{\Omega^\mathfrak{g}_{-1,0},\Omega^\mathfrak{g}_{1,0}\}$. For $\varepsilon=0$, $\{\Omega^\mathfrak{g}_{-1,0},\Omega^\mathfrak{g}_{1,0}\}$ are equipped with eigenfunctions $\{e^{-iz},e^{iz}\}$. We choose the real basis $\{\cos z,\sin z\}.$ We calculate expansion of a basis $\{\psi_1,\psi_2\}$ for the eigenspace corresponding to the eigenvalues of $\{\Omega^\mathfrak{g}_{-1,\rho},\Omega^\mathfrak{g}_{1,\rho}\}$ in $L^2_0(\mathbb{T})$ by using expansions of $\eta_\mathfrak{g}$ and $c_\mathfrak{g}$ in \eqref{E:w_ansatz}, for small $\varepsilon$ and $\rho$ as 
\begin{align*}
	\psi^\mathfrak{g}_{1}(z)  & =\cos z+2 \varepsilon A_{2} \cos 2 z+O(\varepsilon^{2}),\\
	\psi^\mathfrak{g}_{2}(z) & =\sin z+2 \varepsilon A_{2} \sin 2 z+O(\varepsilon^{2}).
\end{align*} 
We have the following expression for  $\mathcal{Q}^\mathfrak{g}_{\varepsilon}(\rho)$ after expanding and using $\eta$ and $c$
\begin{align*}
    \mathcal{Q}^\mathfrak{g}_{\varepsilon}(\rho)= \g(\partial_z+\partial_z^{-1})+\rho^2\partial_z^{-1}+k^2\partial_z(\jmath(k)-\J_k)&-2\varepsilon \alpha_1k^2\partial_z(\cos z)+k^2\varepsilon^2(\alpha_1\eta_{\mathfrak{g}_2}-3/4\alpha_2)\partial_z\\&-k^2\varepsilon^2(2\alpha_1\eta_{\mathfrak{g}_2}+3/2\alpha_2)\partial_z(\cos 2z)+O(\varepsilon^3)
\end{align*}
We use expansion of $\mathcal Q^\mathfrak{g}_\varepsilon(\rho)$ to find actions of $\mathcal{Q}^\mathfrak{g}_\varepsilon(\rho)$ and identity operator on $\{\psi^\mathfrak{g}_1,\psi^\mathfrak{g}_2\}$, and arrive at
\begin{align*}
	\mathcal{T}^\mathfrak{g}_\varepsilon(\rho)&=\begin{pmatrix}
	0 & \rho^2+\dfrac{3}{2}\alpha_2 k^2\varepsilon^2+2\varepsilon^2\alpha_1k^2\eta_{\mathfrak{g}_2} \\ -\rho^2 & 0\end{pmatrix}+O(\varepsilon^2(\rho+\varepsilon)),\\
	\end{align*}
	To locate where these two eigenvalues are bifurcating from the origin, we analyze the characteristic equation $	\left|\mathcal{T}^\mathfrak{g}_\varepsilon(\rho)-\lambda_\mathfrak{g} \mathcal{I}_\varepsilon\right|=0$, where $\mathcal{I}_\varepsilon$ is $2\times 2$ identity matrix. From which we conclude that
	\begin{equation}
	    \lambda_\mathfrak{g}^2+\rho^2(\rho^2+\dfrac{3}{2}\alpha_2 k^2\varepsilon^2+2\varepsilon^2\alpha_1k^2\eta_{\mathfrak{g}_2})=O(|\varepsilon|\rho^2(\rho^2+\varepsilon^2)
	\end{equation}
	From which we conclude that
	\begin{equation}
	    \lambda_\mathfrak{g}^2=-\rho^2(\rho^2-\dfrac{3}{2}|\alpha_2| k^2\varepsilon^2+2\varepsilon^2|\alpha_1|k^2\eta_{\mathfrak{g}_2})+O(|\varepsilon|\rho^2(\rho^2+\varepsilon^2)
	\end{equation}
	For $\rho=\varepsilon=0$, we get zero as a double eigenvalue, which agrees with our calculation.
	\begin{enumerate}
    \item $\alpha_1=1$, $\alpha_2=0$
    \begin{enumerate}
        \item $\jmath$ is increasing, then we obtain two real eigenvalues with opposite signs when \[|k^2(\jmath(2k)-\jmath(k))|>\dfrac{3\g}{4}\quad\text{and}\quad\rho^2<2\varepsilon^2k^2|\eta_{\mathfrak{g}_2}|\]
        \item $\jmath$ is decreasing, we obtain two purely imaginary eigenvalues.
    \end{enumerate}
    \item $\alpha_1=0$, $\alpha_2=-1$; we obtain two real eigenvalues with opposite signs for  all $\jmath$ and $\forall k>0$ and
    \[|\rho|<\sqrt{\dfrac{3}{2}}|\varepsilon|k\].
    \item $\alpha_1=1$, $\alpha_2=-1$, we obtain two real eigenvalues with opposite signs for  all $\jmath$ when
    \[\dfrac{2k^2}{3\g+4k^2(\jmath(k)-\jmath(2k))}<\dfrac{3}{4}\quad\text{and}\quad\rho^2<\dfrac{3}{2}|\alpha_2| k^2\varepsilon^2-2\varepsilon^2|\alpha_1|k^2|\eta_{\mathfrak{g}_2}|\]
    \end{enumerate}

 Hence the Theorem \ref{t:5}.
 
\subsection{Finite or short transverse perturbations}
Here we work with the values of $\rho$ away from the origin, $|\rho|\geq\rho_0$ for some $\rho_0>0$. We do further analysis to check if collisions in Table \ref{tab:col} indeed lead to instability or not.
\subsubsection{\underline{(In)stability analysis for $\Theta=1$}:}
For periodic perturbations, there are no collisions mentioned for $\Theta=1$. However, there in one pair of colliding eigenvalues $\{\Omega^\mathfrak{g}_{-1,\rho,\xi}, \Omega^\mathfrak{g}_{0,\rho,\xi}\}$ for $\b>0$ with respect to non-periodic perturbations, colliding for
\begin{equation}\label{e:rc1}
    \rho^2=-\g(1-\xi+\xi^2)+ k^2(\xi(\xi-1)(\jmath(k)-\jmath(k(\xi-1)))-\xi(\jmath(k)-\jmath(k\xi))
\end{equation}
There exists a curve $\rho=\rho_c$ given in \eqref{e:rc1} along which \[\Omega^\mathfrak{g}(\rho_c,\xi):=\Omega^\mathfrak{g}_{-1,\rho_c,\xi} =\Omega^\mathfrak{g}_{0,\rho_c,\xi}.\]
Furthermore,
\begin{align}
 \phi^\mathfrak{g}_{0,-1}(z) = e^{-i z}
 \quad \quad and \quad\quad
 \phi^\mathfrak{g}_{0,0}(z) = 1
\end{align}
forms the corresponding eigenspace for $\mathcal{Q}^\mathfrak{g}_0(\rho_c,\xi)$ associated with the two eigenvalues. Let
\begin{align}
    i \Omega^\mathfrak{g}(\rho_c,\xi) + i \theta^\mathfrak{g}_{\varepsilon,\rho,-1}
    \quad \quad and \quad\quad
    i \Omega^\mathfrak{g}(\rho_c,\xi) + i \theta^\mathfrak{g}_{\varepsilon,\rho,0}
\end{align}
are the eigenvalues of $\mathcal{Q}^\mathfrak{g}_\varepsilon(\rho_c,\xi)$ departing from $i\Omega^\mathfrak{g}_{-1,\rho_c,\xi}$ and $i\Omega^\mathfrak{g}_{0,\rho_c,\xi}$ respectively for sufficiently small $|\varepsilon|$ and $|\rho-\rho_c|$. Let $\{\phi^\mathfrak{g}_{\varepsilon,\rho,-1}(z), \phi^\mathfrak{g}_{\varepsilon,\rho,0}(z)\}$ be the extended eigenspace associated with two bifurcating eigenvalues. Following \cite{Creedon2021High-FrequencyApproach}, we can take,
\begin{align}\label{eq:eigg1}
    \phi^\mathfrak{g}_{\varepsilon,-1,\rho}(z) =& e^{-iz}+\varepsilon \phi^\mathfrak{g}_{-1}+O(\varepsilon^2), \\
    \phi^\mathfrak{g}_{\varepsilon,0,\rho}(z) =& 1+\varepsilon\phi^\mathfrak{g}_{0}+O(\varepsilon^2)\label{eq:eigg2}
\end{align}
Using the orthonormality conditions on the eigenfunctions $\phi^\mathfrak{g}_{\varepsilon,\rho,-1}$ and $\phi^\mathfrak{g}_{\varepsilon,\rho,0}$, we get
\[
\phi^\mathfrak{g}_{-1}=\phi^\mathfrak{g}_{0} = 0.
\]
In order to find the eigenvalues, we compute matrix representations of $\mathcal{Q}^\mathfrak{g}_\varepsilon(\rho_c,\xi)$ and identity operators on $\{\phi^\mathfrak{g}_{\varepsilon,-1,\rho}(z), \phi^\mathfrak{g}_{\varepsilon,0,\rho}(z)\}$ for sufficiently small $|\varepsilon|$ and $|\rho-\rho_c|$ 
\[
\mathcal{B}^\mathfrak{g}_\varepsilon(\rho,\xi) = 
\begin{pmatrix}i\Omega^\mathfrak{g}(\rho_c,\xi)-i\dfrac{\varsigma}{\xi-1}&
-i \varepsilon\alpha_1 k^2\xi
\\
-i\varepsilon \alpha_1k^2(\xi-1) &i\Omega^\mathfrak{g}(\rho_c,\xi)-i\dfrac{\varsigma}{\xi}
\end{pmatrix}+O(|\varepsilon|(|\varsigma|+|\varepsilon|)),
\]
where $\varsigma=\rho^2-\rho_c^2$ and
$\mathcal{I}_\varepsilon$ is the $2\times 2$ identity matrix. We solve the characteristic equation $\det(\mathcal{B}^\mathfrak{g}_\varepsilon(\rho,\xi)-\lambda^\mathfrak{g}\mathcal{I}_\varepsilon)=0$ for $\lambda$ of the form
\[
\lambda^\mathfrak{g} = i\Omega^\mathfrak{g}(\rho_c,\xi) +i\theta,
\]
and obtain the polynomial equation
\[
\theta^2+\theta\varsigma\left(\dfrac{1}{\xi-1} + \frac{1}{\xi} +
O(\varepsilon^2)\right)
-\varepsilon^2\alpha_1^2k^4\xi(\xi-1)+\frac{\varsigma^2}{\xi(\xi-1)} +
  O(\varepsilon^2(|\varsigma|+\varepsilon^2)) =0.
\]
A direct calculation depicts that the discriminant of this polynomial is
\[
\operatorname{disc}_\varepsilon(\varsigma,\xi) = 
\frac{\varsigma^2}{\xi^2(\xi-1)^2} +4\alpha_1^2k^4\xi(\xi-1)\varepsilon^2+
O(\varepsilon^2(|\varsigma|+\varepsilon^2)).
\]For any $\varepsilon$ sufficiently small there exists
\[
\varsigma_\varepsilon(\xi) = 2\alpha_1k^2\xi^{3/2}(1-\xi)^{3/2}|\varepsilon|+O(\varepsilon^2)>0
\]
such that the two eigenvalues of $\mathcal Q^\mathfrak{g}_\varepsilon(\rho,\xi)$  are purely imaginary when $|\rho^2-\rho_c^2|\geq \varsigma_\varepsilon(\xi)$ and complex with opposite nonzero real parts when $|\rho^2-\rho_c^2|<\varsigma_\varepsilon(\xi)$, which proves the theorem \ref{t:6}.

\subsubsection{\underline{(In)stability analysis for $\Theta\geq2$}:}
For some $n \in \mathbb{Z}$ and a fixed $\Theta\geq2$, we have
\begin{equation}\label{e:col_cond}
 i \Omega^\mathfrak{g}_{n, \rho_\ast,\xi}=i \Omega^\mathfrak{g}_{n+\Theta, \rho_\ast,\xi}=i \Omega(\rho_\ast,\xi), \quad \xi\in (-1/2,1/2]
\end{equation}
$\xi=0$ represents the periodic case and $\xi\neq0$ represents the non-periodic case. $i\Omega^\mathfrak{g}$ is an eigenvalue of $\mathcal{Q}^\mathfrak{g}_0(\rho_c,\xi)$ of multiplicity two with an orthonormal basis of eigenfunctions $\{e^{inz},e^{i(n+\Theta)z}\}$. Let $i \Omega^\mathfrak{g} + i \nu_{a,n}$ and $i \Omega^\mathfrak{g} + i \nu_{a,n+2}$ be the eigenvalues of $\mathcal{Q}^\mathfrak{g}_\varepsilon(\rho,\xi)$ bifurcating from $i\Omega^\mathfrak{g}_{n,\rho_c,\xi}$ and $i\Omega^\mathfrak{g}_{n+2,\rho_c,\xi}$ respectively, for $|\varepsilon|$ and $|\rho-\rho_c|$ small. For the corresponding eigenspace, suppose $\{\varphi^\mathfrak{g}_{\varepsilon,n}(z), \varphi^\mathfrak{g}_{\varepsilon,n+2}(z)\}$ be an orthonormal basis. We suppose the following expansions 
\begin{align}\label{eq:eiggg1}
    \varphi^\mathfrak{g}_{\varepsilon,n}(z) =& e^{inz}+\varepsilon\varphi_{n,1}+\varepsilon^2\varphi_{n,2}+\dots+\varepsilon^\Theta\varphi_{n,\Theta}+O(\varepsilon^{\Theta+1}), \\
    \varphi^\mathfrak{g}_{\varepsilon,n+\Theta}(z) =& e^{i(n+\Theta)z}+\varepsilon\varphi_{n+\Theta,1}+\varepsilon^2\varphi_{n+\Theta,2}+\dots+\varepsilon^\Theta\varphi_{n+\Theta,\Theta}+O(\varepsilon^{\Theta+1}).\label{eq:eiggg2}
\end{align}
We use orthonormality of $\varphi^\mathfrak{g}_{\varepsilon,n}$ and $\varphi^\mathfrak{g}_{\varepsilon,n+\Theta}$ to find that
\[
\varphi^\mathfrak{g}_{n,1} = \varphi^\mathfrak{g}_{n,2} =\dots=\varphi^\mathfrak{g}_{n,\Theta}= \varphi^\mathfrak{g}_{n+\Theta,1} = \varphi^\mathfrak{g}_{n+\Theta,2} = \dots = \varphi^\mathfrak{g}_{n+\Theta,\Theta}=0.\]
Using the expansions of $\eta$ and $c$ in \eqref{e:expptw}, we expand $\mathcal{Q}^\mathfrak{g}_\varepsilon(\rho,\xi)$ in $\varepsilon$ as
\begin{equation}
\begin{aligned}\label{e:op0111}
    \mathcal{Q}^\mathfrak{g}_\varepsilon(\rho,\xi)=\mathcal{Q}_0(\rho,\xi)+k^2(\b_2\varepsilon^2+\b_4\varepsilon^4+\dots)(\partial_z+i\xi)&-2k^2\varepsilon\delta_1(\partial_z+i\xi)(\cos z)\\&-\dots-2k^2\varepsilon^\Theta\delta_\Theta(\partial_z+i\xi)(\cos (\Theta z))
\end{aligned}
\end{equation}

Following the same procedures as in the preceding subsection, we arrive at
\begin{align*}
	\mathscr{G}^\mathfrak{g}_\varepsilon(\rho,\xi)&=\begin{pmatrix}
		\mathscr{G}^\mathfrak{g}_{11} & \ \mathscr{G}^\mathfrak{g}_{12} \\ \mathscr{G}^\mathfrak{g}_{21} & \mathscr{G}^\mathfrak{g}_{22}\end{pmatrix}+O(\varepsilon^{\Theta+1}),\\
	\end{align*}
where 
\begin{align*}
		\mathscr{G}^\mathfrak{g}_{11}& =i\Omega(\rho_\ast,\xi)-i\dfrac{\varsigma}{n+\xi}+ik^2(n+\xi)(\b_2\varepsilon^2+\b_4\varepsilon^4+\dots)  , \\
	\mathscr{G}^\mathfrak{g}_{12} & =-ik^2(n+\xi+\Theta)\delta^\Theta\varepsilon^\Theta,\\
	\mathscr{G}^\mathfrak{g}_{21}&=-ik^2(n+\xi)\delta^\Theta\varepsilon^\Theta,\\ \mathscr{G}^\mathfrak{g}_{22}&=i\Omega(\rho_\ast,\xi)-i\dfrac{\varsigma}{n+\xi+\Theta}+ik^2(n+\xi+\Theta)(\b_2\varepsilon^2+\b_4\varepsilon^4+\dots),
\end{align*} 
The resulting discriminant of the characteristic equation $\det(\mathscr{G}^\mathfrak{g}_{\varepsilon}(\rho,\xi)-(i \Omega(\rho_\ast,\xi) + i \theta) \mathcal{I}_{\varepsilon}) = 0$ is
\begin{align*}
    \operatorname{disc}^\mathfrak{g}_\varepsilon(\varsigma) = \dfrac{\Theta^2\varsigma^2}{(n+\xi)^2(n+\xi+\Theta)^2}+k^4\Theta^2\beta_2^2\varepsilon^4+O(\varepsilon^2(|\varsigma|+|\varepsilon^3|)).
\end{align*}
For sufficiently small $|\varsigma|$ and $|\varepsilon|$, $\operatorname{disc}^\mathfrak{g}_\varepsilon(\varsigma)$ is positive which implies that no eigenvalue of $\mathcal{Q}^\mathfrak{g}_\varepsilon(\rho,\xi)$ is bifurcating from the imaginary axis due to collision. Hence the proof of Theorems \ref{t:5} and \ref{t:6}.

\section{Applications}\label{sec:app}
\subsection{RM-fKdV-KP Equation}\label{ss:1}
The RM-fKdV-KP equation can be derived from \eqref{e:gRMKP} by choosing
\[
\jmath(k)= 1+|k|^\alpha, \qquad \alpha>1/2
\]  with $\alpha_1=1$, $\alpha_2=0$. 
Clearly, the symbol $\jmath(k)$ meets the hypotheses~\ref{h:m}~$J1$, $J2$ ($\mathfrak{b}=\alpha$,$A_1=1$ and $A_2=2$), and $J3$ ($\jmath$ is strictly increasing for $k>0$). We can obtain the periodic solutions of rotation-modified fKdV equation from \eqref{E:w_ansatz} by replacing $\jmath(k)$ with $1+|k|^\alpha$. Transverse stability and instability of these solutions can be derived using Theorems~\ref{t:5}, \ref{t:6}, \ref{t:7} and \ref{t:8}.
\begin{corollary}[Transverse stability vs. instability of RM-fKdV-KP]\label{c:RMKPfkdv}
For any $\varepsilon$ sufficiently small and $\g>0$, 
\begin{enumerate}
\item 
\begin{enumerate}
    \item For all $k>0$ and $\b<0$, periodic traveling waves \eqref{E:w_ansatz} of \eqref{e:RMFKP} are transversely stable with respect to either periodic or non-periodic (localized or bounded) perturbations in the direction of propagation and periodic perturbations in the transverse direction.
    \item For all $k>0$ and $\b>0$, periodic traveling waves \eqref{E:w_ansatz} of \eqref{e:RMFKP} are transversely stable with respect to periodic perturbations in the direction of propagation and, periodic with finite wavelength perturbations in the transverse direction.
\end{enumerate}
\item  \begin{enumerate}\item $\b>0$, periodic traveling waves \eqref{E:w_ansatz} of \eqref{e:RMFKP} are transversely unstable with  respect to periodic perturbations in both directions and long wavelength perturbations in the transverse direction if 
\[
k>\left|\dfrac{3\g}{4\b(2^\alpha-1)}\right|^{1/\alpha+2}\]
\item $\b>0$, periodic traveling waves \eqref{E:w_ansatz} of \eqref{e:RMFKP} are transversely unstable with  respect to non-periodic (localized or bounded) perturbations in the direction of propagation of the wave and, periodic with finite wavelength perturbations in the transverse direction if 
\[
k>\left(\dfrac{3(2^\alpha)\g}{(2^\alpha-1)\b }\right)^{1/\alpha+2}\]
\end{enumerate}

\end{enumerate}
\end{corollary}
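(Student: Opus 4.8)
The plan is to derive Corollary~\ref{c:RMKPfkdv} as a direct specialization of Theorems~\ref{t:5}--\ref{t:8} to the multiplier symbol $\jmath(k)=1+\beta|k|^{\alpha}$, where $\beta$ is the coefficient in \eqref{e:RMFKP} and the additive $1$ is the irrelevant normalization $\jmath(0)=1$. First I would verify Hypothesis~\ref{h:m} for this $\jmath$: J1 is immediate; J2 holds with $\mathfrak b=\alpha$; and J3 holds because $k\mapsto 1+\beta|k|^{\alpha}$ is strictly increasing on $(0,\infty)$ when $\beta>0$ and strictly decreasing when $\beta<0$. By Theorem~\ref{T:sol} the waves \eqref{E:w_ansatz} then exist for all $k>0$ when $\beta<0$, and for all admissible $k>0$ when $\beta>0$, so that Theorems~\ref{t:5}--\ref{t:8} apply with $\alpha_1=1$, $\alpha_2=0$.

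The only computational content is the pair of elementary identities that turn the abstract thresholds in Theorems~\ref{t:5} and \ref{t:6} into explicit ones: for $k>0$,
\[
k^{2}\bigl(\jmath(2k)-\jmath(k)\bigr)=\beta\,(2^{\alpha}-1)\,k^{\alpha+2},\qquad
k^{2}\bigl(\jmath(k)-\jmath(k/2)\bigr)=\beta\,\frac{2^{\alpha}-1}{2^{\alpha}}\,k^{\alpha+2}.
\]
For the long-wavelength periodic case I would apply Theorem~\ref{t:5}(1): since $\alpha_1=1$, $\alpha_2=0$, part (a) ($\jmath$ increasing, i.e.\ $\beta>0$) gives transverse instability precisely when $\beta(2^{\alpha}-1)k^{\alpha+2}>3\gamma/4$, equivalently $k>\bigl(3\gamma/(4\beta(2^{\alpha}-1))\bigr)^{1/(\alpha+2)}$, which is statement (2)(a); part (b) ($\beta<0$, $\jmath$ decreasing) gives no instability. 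For the finite/short-wavelength non-periodic case, Theorem~\ref{t:6} ($\jmath$ increasing, $\beta>0$) yields instability when $\beta\,2^{-\alpha}(2^{\alpha}-1)k^{\alpha+2}>3\gamma$, i.e.\ $k>\bigl(3\cdot2^{\alpha}\gamma/((2^{\alpha}-1)\beta)\bigr)^{1/(\alpha+2)}$, which is statement (2)(b).

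For the stability statements I would invoke Theorems~\ref{t:7} and \ref{t:8} under their standing hypothesis — implicit in the corollary — that the corresponding one-dimensional periodic waves of \eqref{e:gost} with this $\jmath$, $\alpha_1=1$, $\alpha_2=0$, are $L^{2}(\mathbb{T})$-spectrally stable to one-dimensional perturbations (the fractional-KdV--Ostrovsky waves): then for $\beta<0$ ($\jmath$ decreasing, $\alpha_2=0$) Theorem~\ref{t:7}(1) gives statement (1)(a), and for $\beta>0$ (any $\jmath$) Theorem~\ref{t:8} gives statement (1)(b). The main point of care — rather than a genuine obstacle — is threading $\beta$ consistently through the reduction: one must note that the thresholds in Theorems~\ref{t:5}--\ref{t:6} depend on $\jmath$ only through the differences $\jmath(k)-\jmath(nk)$, so reinstating the coefficient $\beta$ (equivalently, using $\jmath(k)=1+\beta|k|^{\alpha}$ in place of $1+|k|^{\alpha}$) merely rescales those thresholds by $\beta$ and flips the increasing/decreasing dichotomy across $\beta=0$; beyond that, everything reduces to the two displayed identities.
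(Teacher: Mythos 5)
Your proposal is correct and follows essentially the same route as the paper: verify Hypothesis~\ref{h:m} for the fractional symbol, invoke Theorem~\ref{T:sol} for existence, and specialize Theorems~\ref{t:5}--\ref{t:8} via the two identities $k^{2}(\jmath(2k)-\jmath(k))=\beta(2^{\alpha}-1)k^{\alpha+2}$ and $k^{2}(\jmath(k)-\jmath(k/2))=\beta\,2^{-\alpha}(2^{\alpha}-1)k^{\alpha+2}$, which reproduce the two displayed thresholds. Your explicit tracking of the coefficient $\beta$ inside $\jmath$ (writing $\jmath(k)=1+\beta|k|^{\alpha}$ so that the increasing/decreasing dichotomy flips with the sign of $\beta$) is in fact a small clarification of a point the paper leaves implicit, but it does not change the argument.
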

Note that, for $\g=0$, all the results mentioned in Corollary \ref{c:RMKPfkdv} agree with the findings in \cite{Bhavna2022TransverseEquation} for KP-fKdV equation.
\subsection{RMBO-KP Equation}\label{ss:2}
RMBO-KP equation analogous to RM-fKdV-KP equation with $\alpha=1$. We have derived transverse stability and instability of these solutions in Corollary \ref{c:RMKPBO} using Theorems~\ref{t:5}, \ref{t:6}, \ref{t:7} and \ref{t:8}.
\begin{corollary}[Transverse stability vs. instability of RMKP-BO]\label{c:RMKPBO}
For any $\varepsilon$ sufficiently small and $\g>0$, 
\begin{enumerate}
\item 
\begin{enumerate}
    \item For all $k>0$ and $\b<0$, periodic traveling waves \eqref{E:w_ansatz} of \eqref{e:RMBOKP} are transversely stable with respect to either periodic or non-periodic (localized or bounded) perturbations  in the direction of propagation and periodic perturbations  in the transverse direction.
    \item For all $k>0$ and $\b>0$, periodic traveling waves \eqref{E:w_ansatz} of \eqref{e:RMBOKP} are transversely stable with respect to periodic perturbations in the direction of propagation and, periodic with finite wavelength perturbations in the transverse direction.
\end{enumerate}
\item  \begin{enumerate}\item $\b>0$, periodic traveling waves \eqref{E:w_ansatz} of \eqref{e:RMBOKP} are transversely unstable with  respect to periodic perturbations in both directions and long wavelength perturbations in the transverse direction if 
\[
k>\left|\dfrac{3\g}{4\b}\right|^{1/3}\]
\item $\b>0$, periodic traveling waves \eqref{E:w_ansatz} of \eqref{e:RMBOKP} are transversely unstable with  respect to non-periodic (localized or bounded) perturbations  in the direction of propagation of the wave and, periodic with finite wavelength perturbations  in the transverse direction if 
\[
k>\left(\dfrac{6\g}{\b }\right)^{1/3}\]
\end{enumerate}

\end{enumerate}
\end{corollary}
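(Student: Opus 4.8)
The corollary is a direct specialization of the general transverse (in)stability results, Theorems~\ref{t:5}--\ref{t:8}, to the RMBO-KP equation~\eqref{e:RMBOKP}, so the plan is to instantiate those theorems and simplify. The first step is to identify the data: \eqref{e:RMBOKP} sits inside the gRMKP framework \eqref{e:gRMKP} with quadratic-only nonlinearity $\alpha_1=1$, $\alpha_2=0$, and with multiplier symbol (the constant normalized as permitted by J1 via a Galilean shift, and with the dispersion coefficient $\beta$ retained) $\jmath(k)=1+\beta|k|$. One checks that this $\jmath$ meets Hypothesis~\ref{h:m}: J1 is immediate, J2 holds with $\mathfrak{b}=1$, and J3 holds because $\jmath$ is strictly monotone on $k>0$ — strictly increasing when $\beta>0$ and strictly decreasing when $\beta<0$. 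Consequently Theorem~\ref{T:sol} supplies the one-parameter family \eqref{E:w_ansatz} of small-amplitude periodic traveling waves of \eqref{e:RMBOKP} (for every $k>0$ when $\beta<0$, and for every $k>0$ outside the at most countably many resonant wavenumbers when $\beta>0$), and the ``$\jmath$ increasing / decreasing'' dichotomy in Theorems~\ref{t:5}--\ref{t:8} becomes a dichotomy on the sign of $\beta$.

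For the instability statements the only computation needed is of the two dispersion differences that drive Theorems~\ref{t:5} and \ref{t:6}: for $k>0$ one has $\jmath(2k)-\jmath(k)=\beta k$ and $\jmath(k)-\jmath(k/2)=\tfrac12\beta k$. Taking $\beta>0$ (so $\jmath$ is increasing) in Theorem~\ref{t:5}, case $\alpha_1=1$, $\alpha_2=0$, subcase (a), the instability criterion $|k^2(\jmath(2k)-\jmath(k))|>3\gamma/4$ becomes $\beta k^3>3\gamma/4$, i.e. $k>|3\gamma/(4\beta)|^{1/3}$, which is the first instability statement (co-periodic in the propagation direction, long-wavelength periodic in the transverse direction). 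Likewise Theorem~\ref{t:6}, with $k^2(\jmath(k)-\jmath(k/2))>3\gamma$ turning into $\tfrac12\beta k^3>3\gamma$, i.e. $k>(6\gamma/\beta)^{1/3}$, gives the second instability statement (non-periodic in the propagation direction, finite-wavelength periodic in the transverse direction). For $\beta<0$, $\jmath$ is decreasing and Theorem~\ref{t:5}, subcase (b), records that no such instability occurs, consistent with the stability claims.

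For the stability statements: when $\beta<0$ the symbol is decreasing and $\alpha_2=0$, so Theorem~\ref{t:7}(1) yields transverse stability with respect to either periodic or non-periodic (localized or bounded) perturbations in the propagation direction and periodic perturbations in the transverse direction — this is statement (1)(a). For any $\beta$, Theorem~\ref{t:8} yields transverse stability with respect to periodic perturbations in the propagation direction and finite-wavelength periodic transverse perturbations; specializing to $\beta>0$ gives statement (1)(b). Both of these invoke the standing hypothesis that the underlying one-dimensional small-amplitude periodic waves of the generalized Ostrovsky equation \eqref{e:gost} — here the rotation-modified Benjamin--Ono equation \eqref{e:RMBO} — are spectrally stable in $L^2(\mathbb{T})$ against one-dimensional perturbations, so the one genuinely external input is to quote (or establish) that one-dimensional spectral stability for the small waves \eqref{E:w_ansatz} of \eqref{e:RMBO}. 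I do not expect a hard analytic obstacle beyond this: the substance of the corollary is the correct matching of perturbation classes to theorems and the sign/exponent bookkeeping that turns $|k^2(\jmath(2k)-\jmath(k))|>3\gamma/4$ into $k>|3\gamma/(4\beta)|^{1/3}$ and $k^2(\jmath(k)-\jmath(k/2))>3\gamma$ into $k>(6\gamma/\beta)^{1/3}$.
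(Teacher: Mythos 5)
Your proposal is correct and follows essentially the same route as the paper, which obtains this corollary by specializing Theorems~\ref{t:5}--\ref{t:8} to the fKdV-type symbol with $\alpha=1$ (i.e. $\jmath(k)=1+\beta|k|$ after the Galilean normalization $\jmath(0)=1$), so that $\jmath(2k)-\jmath(k)=\beta k$ and $\jmath(k)-\jmath(k/2)=\beta k/2$ yield the thresholds $k>|3\gamma/(4\beta)|^{1/3}$ and $k>(6\gamma/\beta)^{1/3}$. Your explicit flagging of the one-dimensional $L^2(\mathbb{T})$-spectral-stability hypothesis inherited from Theorems~\ref{t:7} and \ref{t:8} is a point the paper's statement leaves implicit, and is worth retaining.
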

For $\g=0$, all the results mentioned in Corollary \ref{c:RMKPBO} agree with the findings in \cite{Bhavna2022TransverseEquation} for KP-BO equation.
\subsection{RMG-KP Equation}\label{ss:3}
The RMG-KP equation can be derived from \eqref{e:gRMKP} by choosing
\[
\jmath(k)= 1+|k|^2, \qquad 
\]  with $\alpha_1=1$, $\alpha_2=-1$. 
Clearly, the symbol $\jmath(k)$ meets the hypotheses~\ref{h:m}~$J1$, $J2$ ($A_1=1$, and $A_2=2$), and $J3$ ($\jmath$ is strictly increasing for $k>0$). We can obtain the periodic solutions of rotation-modified Gardner equation from \eqref{E:w_ansatz} by replacing $\jmath(k)$ with $1+|k|^2$. We have derived transverse stability and instability of these solutions in Corollary \ref{c:RMGKP} using Theorems~\ref{t:5}, \ref{t:6}, \ref{t:7} and \ref{t:8}.
\begin{corollary}[Transverse stability vs. instability of RMG-KP]\label{c:RMGKP}
For any $\varepsilon$ sufficiently small and $\g>0$, 
\begin{enumerate}
\item 
\begin{enumerate}
    \item For all $k>0$ and $\b<0$, periodic traveling waves \eqref{E:w_ansatz} of \eqref{e:GKP} are transversely stable with respect to non-periodic (localized or bounded) perturbations in the direction of propagation and periodic perturbations in the transverse direction.
    \item For all $k>0$ and $\b\neq0$, periodic traveling waves \eqref{e:expptw} of \eqref{e:GKP} are transversely stable with respect to periodic perturbations in the direction of propagation and, periodic with finite wavelength perturbations in the transverse direction.
\end{enumerate}
\item \begin{enumerate} \item periodic traveling waves \eqref{E:w_ansatz} of \eqref{e:GKP} are transversely unstable with respect to periodic perturbations in both directions and long wavelength perturbations in the transverse direction if \begin{enumerate}
\item $\b>0$,
\[
36|\b|k^4+8k^2<9\g\]
\item $\b<0$,
\[
-36|\b|k^4+8k^2<9\g\]

\end{enumerate}
\item $\b>0$, periodic traveling waves \eqref{E:w_ansatz} of \eqref{e:GKP} are transversely unstable with  respect to non-periodic (localized or bounded) perturbations in the direction of propagation of the wave and, periodic with finite wavelength perturbations in the transverse direction if 
\[
k>\left(\dfrac{4\g}{\b }\right)^{1/4}\] 

\end{enumerate}

\end{enumerate}
\end{corollary}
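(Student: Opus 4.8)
The plan is to obtain Corollary~\ref{c:RMGKP} as a direct specialization of the general Theorems~\ref{t:5}--\ref{t:8}: one substitutes the RMG--KP data, namely the symbol $\jmath(k)=1+|k|^2$ (with the dispersion parameter $\b$ sitting in the quadratic term), $\alpha_1=1$ and $\alpha_2=-1$, into the abstract hypotheses and simplifies. First I would check that this $\jmath$ is covered by Hypothesis~\ref{h:m}: J1 is immediate ($\jmath$ even, real-valued, $\jmath(0)=1$); J2 holds with $\mathfrak{b}=2$; and J3 holds since $\jmath$ is strictly monotone on $\{k>0\}$, strictly increasing if $\b>0$ and strictly decreasing if $\b<0$. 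Thus Theorem~\ref{T:sol} yields the small-amplitude $2\pi/k$-periodic traveling waves~\eqref{E:w_ansatz}, and I would record the two expansion coefficients that control the analysis, obtained from $\jmath(k)-\jmath(2k)=-3\b k^2$:
\[
\eta_{\mathfrak{g}_2}=\frac{2k^2}{3\g+4k^2(\jmath(k)-\jmath(2k))}=\frac{2k^2}{3\g-12\b k^4},\qquad c_{\mathfrak{g}_2}=\eta_{\mathfrak{g}_2}-\tfrac34.
\]

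For the stability statements in part~(1), I would invoke Theorem~\ref{t:7}: its case~(2) applies here because $\alpha_2=-1\neq0$, and it gives, for $\jmath$ decreasing (i.e.\ $\b<0$), transverse stability against non-periodic $x$-perturbations together with periodic $y$-perturbations, which is part~(1)(a); and Theorem~\ref{t:8}, valid for every admissible $\jmath$ and hence for every $\b\neq0$, gives transverse stability against periodic $x$-perturbations together with finite-wavelength periodic $y$-perturbations, which is part~(1)(b). Both statements carry over the standing assumption, as in Theorems~\ref{t:7}--\ref{t:8}, that the underlying one-dimensional Ostrovsky--Gardner periodic waves are spectrally stable in $L^2(\mathbb{T})$.

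For the instability statements in part~(2), I would specialize the $\alpha_1=1,\ \alpha_2=-1$ branch of Theorem~\ref{t:5}: its criterion $\dfrac{2k^2}{3\g+4k^2(\jmath(k)-\jmath(2k))}<\dfrac34$ becomes $\dfrac{2k^2}{3\g-12\b k^4}<\dfrac34$ after the substitution above, and clearing the denominator — keeping track of its sign — yields $36|\b|k^4+8k^2<9\g$ when $\b>0$ and $-36|\b|k^4+8k^2<9\g$ when $\b<0$, which is part~(2)(a). For part~(2)(b) I would apply Theorem~\ref{t:6}, which needs $\jmath$ increasing (so $\b>0$) and asks for $k^2(\jmath(k)-\jmath(k/2))>3\g$; since $\jmath(k)-\jmath(k/2)=\tfrac34\b k^2$, this reads $\tfrac34\b k^4>3\g$, i.e.\ $k>\bigl(4\g/\b\bigr)^{1/4}$.

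The computation is almost entirely bookkeeping, so I expect the only real care points to be (i) correctly matching the monotonicity/sign hypotheses of each of Theorems~\ref{t:5}--\ref{t:8} ($\jmath$ increasing versus decreasing, $\alpha_2=0$ versus $\alpha_2\neq0$) to the sub-cases $\b>0$ and $\b<0$, and (ii) handling the sign of the denominator $3\g-12\b k^4$ when clearing it in part~(2)(a): this is exactly the quantity vanishing at the $n=2$ resonance excluded in Theorem~\ref{T:sol}, and one should note that the threshold $36|\b|k^4+8k^2<9\g$ for $\b>0$ automatically forces $k^4<\g/(4\b)$, i.e.\ keeps $k$ below that resonance, so $\eta_{\mathfrak{g}_2}$ is well defined and positive there. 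Finally, I would add — as the paper does for Corollaries~\ref{c:RMKPfkdv} and~\ref{c:RMKPBO} — the remark that letting $\g\to0$ recovers the known transverse (in)stability picture for the KP--Gardner equation.
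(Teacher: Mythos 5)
Your proposal is correct and follows exactly the route the paper takes: the paper's entire justification of Corollary~\ref{c:RMGKP} is the remark that it follows from Theorems~\ref{t:5}--\ref{t:8} with $\jmath(k)=1+\b|k|^2$, $\alpha_1=1$, $\alpha_2=-1$, and your substitutions $\jmath(k)-\jmath(2k)=-3\b k^2$ and $\jmath(k)-\jmath(k/2)=\tfrac34\b k^2$ reproduce every stated threshold. Your added care about the sign of the denominator $3\g-12\b k^4$ when clearing it in part~(2)(a), and your explicit carrying of the one-dimensional spectral stability hypothesis from Theorems~\ref{t:7}--\ref{t:8}, are details the paper glosses over, and you handle both correctly.
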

\subsection{Reduced RMKP Equation}\label{ss:4}
The Reduced RMKP equation can be derived from \eqref{e:gRMKP} by choosing
\[
\jmath(k)= 1+|k|^2, \qquad 
\]  with $\b=0$, $\alpha_1=1$ and $\alpha_2=0$. 
Clearly, the symbol $\jmath(k)$ meets the hypotheses~\ref{h:m}~$J1$, $J2$ ($A_1=1$, and $A_2=2$), and $J3$ ($\jmath$ is strictly increasing for $k>0$). We can obtain the periodic solutions of reduced Ostrovsky equation from \eqref{E:w_ansatz} by replacing $\jmath(k)$ with $1+|k|^2$. We can obtain transverse stability and instability of these solutions using Theorems~\ref{t:5}, \ref{t:6}, \ref{t:7} and \ref{t:8}.
\begin{corollary}\label{c:4}
Assume that small-amplitude periodic traveling waves of the reduced Ostrovsky equation are spectrally stable in $L^2(\mathbb T)$ with respect to one-dimensional perturbations. Then
for any $\varepsilon$ sufficiently small, $\g>0$, $k>0$ and $\b=0$, periodic traveling waves \eqref{E:w_ansatz} of \eqref{e:RMKP} are transversely stable with respect to either periodic or non-periodic (localized or bounded) perturbations in the direction of propagation and periodic perturbations in the transverse direction.
\end{corollary}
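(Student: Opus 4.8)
The plan is to recognize the Reduced RMKP equation as the RMKP equation \eqref{e:RMKP} in the degenerate case $\beta=0$ (equivalently, \eqref{e:gRMKP} with $\jmath$ constant), whose $y$-independent reduction is the reduced Ostrovsky equation; the assertion is then the $\beta=0$ instance of Theorem~\ref{t:3}, and it suffices to verify that the spectral reduction of Section~\ref{sec:RMKP} specializes correctly. First I would reproduce that reduction: linearize about the small wave \eqref{E:w_ansatz}, pass to the pseudo-differential problems $\mathcal Q_\varepsilon(\rho)\varphi=\lambda\varphi$ on $L^2_0(\T)$ for co-periodic perturbations and $\mathcal Q_\varepsilon(\rho,\xi)\varphi=\lambda\varphi$ on $L^2(\T)$, $\xi\in(0,1/2]$, for localized or bounded perturbations, and recall that at $\varepsilon=0$ both spectra lie on $i\R$, so that for small $|\varepsilon|$ instability can arise only from collisions of eigenvalues of opposite Krein signature; by Lemma~\ref{l:k1} and Table~\ref{tab:col}, with $\beta=0\le 0$, only the nodes in the $\beta\le0$ columns are of concern.

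Next I would dispose of each such node by specializing to $\beta=0$ the formulas already derived in Section~\ref{sec:RMKP}. From \eqref{e:ccc1} and \eqref{e:cc11} the collision loci reduce to $\rho^2=-\gamma\mathscr{J}(n,\Theta)$ and $\rho^2=-\gamma\mathscr{F}(n,\xi,\Theta)$; in particular the $\Theta=1$ localized collision \eqref{e:rc} degenerates to $\rho^2=-\gamma(1-\xi+\xi^2)<0$ and never occurs, and more generally every potentially unstable Bloch collision with $\Theta\le2$ drops out since its $\rho_c^2$ is negative. The surviving nodes are the long-wavelength co-periodic pair $\{i\Omega_{-1,\rho},i\Omega_{1,\rho}\}$ and, for $\Theta\ge3$, the collisions with $\rho_c^2=\gamma|\mathscr{J}|>0$ (co-periodic) resp.\ $\rho_c^2=\gamma|\mathscr{F}|>0$ (Bloch). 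For the first, the $2\times2$ reduction of Section~\ref{sec:RMKP} gives $\lambda^2=-\rho^2(\rho^2+2\varepsilon^2k^2\eta_2)+O(|\varepsilon|\rho^2(\rho^2+\varepsilon^2))$ with $\eta_2=2k^2/(3\gamma)>0$, so $\lambda^2<0$ and the pair stays on $i\R$; for the rest, the discriminant of the reduced characteristic polynomial becomes
\[
\operatorname{disc}_\varepsilon(\varsigma)=\frac{\Theta^2\varsigma^2}{(n+\xi)^2(n+\xi+\Theta)^2}+k^4\Theta^2\beta_2^2\varepsilon^4+O\!\left(\varepsilon^2(|\varsigma|+|\varepsilon|^3)\right),
\]
and since here $\beta_2=\eta_2=2k^2/(3\gamma)\ne0$ while $(n+\xi)(n+\xi+\Theta)\ne0$ on the relevant ranges ($n\in(-\Theta,0)$ for $\xi=0$, and $n\in[-\Theta,-1]$, $\xi\in(0,1/2]$ in the Bloch case), this is strictly positive for $|\varsigma|,|\varepsilon|$ small, so those eigenvalues do not leave $i\R$ either. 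Together with the assumed $L^2(\T)$-spectral stability of the reduced Ostrovsky wave (which fixes the spectrum at $\rho=0$, in particular the behaviour near $\lambda=0$), this rules out every potentially unstable collision and yields transverse spectral stability with respect to co-periodic or localized/bounded perturbations in $x$ and periodic perturbations in $y$ of arbitrary transverse wavelength.

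The step I expect to be the main obstacle is confirming that nothing degenerates as $\beta\to0$: in Lemma~\ref{l:c1} the threshold $k^\ast=(\gamma\mathscr{J}(n,\Theta)/\beta\mathscr{U}(n,\Theta))^{1/4}$ blows up, so I would check directly from \eqref{e:ccc1} and \eqref{e:cc11}—rather than by taking a limit—that at $\beta=0$ the collision set is exactly the one described above, and that the collisions sitting at the origin of the $\lambda$-plane (the pairs $\Omega_{n,\rho},\Omega_{-n,\rho}$, and for odd $\Theta$ the pairs $\Omega_{n,\rho,1/2},\Omega_{-n-1,\rho,1/2}$) still satisfy $n+\xi\ne0$, so that the $2\times2$ matrices and the discriminant above remain well defined. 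Once this is in place, the argument is identical to the proof of Theorems~\ref{t:3}–\ref{t:4}.
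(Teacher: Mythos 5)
Your proposal is correct, and in substance it runs on the same machinery as the paper; the difference lies in how that machinery is invoked. The paper proves Corollary~\ref{c:4} by casting the reduced RMKP equation as an instance of \eqref{e:gRMKP} and simply citing Theorems~\ref{t:5}--\ref{t:8}, whereas you specialize the spectral analysis of Section~\ref{sec:RMKP} directly to $\beta=0$: the collision loci \eqref{e:ccc1} and \eqref{e:cc11} reduce to $\rho^2=-\gamma\mathscr{J}$ and $\rho^2=-\gamma\mathscr{F}$, which empties every potentially unstable node with $\Theta\le 2$ except the long-wavelength pair $\{-1,1\}$, handled via $\lambda^2=-\rho^2(\rho^2+2\varepsilon^2k^2\eta_2)$ with $\eta_2=2k^2/(3\gamma)>0$, while the surviving $\Theta\ge 3$ collisions are ruled out by positivity of the reduced discriminant. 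Your route is arguably the more careful of the two: at $\beta=0$ the dispersion symbol is constant, so Hypothesis~\ref{h:m} (J3, strict monotonicity) fails and the increasing/decreasing dichotomy underlying Theorems~\ref{t:5}--\ref{t:8} does not literally apply (indeed the paper's remark in Section~\ref{ss:4} that $\jmath$ is ``strictly increasing'' is inconsistent with $\beta=0$). Your explicit check that nothing degenerates as $\beta\to 0$ --- the blow-up of the threshold $k^\ast$, the collisions sitting at the origin of the $\lambda$-plane --- closes precisely the gap that the bare citation leaves open, at the cost of redoing computations the general theorems were designed to encapsulate.
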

\subsection{RM-mKdV-KP Equation}\label{ss:5}
The RM-mKdV-KP equation can be derived from \eqref{e:gRMKP} by choosing
\[
\jmath(k)= 1+|k|^2, \qquad 
\]  with $\alpha_1=0$ and $\alpha_2=-1$. 
Clearly, the symbol $\jmath(k)$ meets the hypotheses~\ref{h:m}~$J1$, $J2$ ($A_1=1$, and $A_2=2$), and $J3$ ($\jmath$ is strictly increasing for $k>0$). We can obtain the periodic solutions of rotation-modified mKdV equation from \eqref{E:w_ansatz} by replacing $\jmath(k)$ with $1+|k|^2$. We can obtain transverse stability and instability of these solutions using Theorems~\ref{t:5}, \ref{t:6}, \ref{t:7} and \ref{t:8}.
\begin{corollary}\label{c:5}
For any $\varepsilon$ sufficiently small and $\g>0$,
\begin{enumerate}
\item 
\begin{enumerate}
    \item For all $k>0$ and $\b<0$, periodic traveling waves \eqref{E:w_ansatz} of \eqref{e:mKdVKP} are transversely stable with respect to non-periodic (localized or bounded) perturbations in the direction of propagation and periodic perturbations in the transverse direction.
    \item For all $k>0$ and $\b\neq0$, periodic traveling waves \eqref{e:expptw} of \eqref{e:mKdVKP} are transversely stable with respect to periodic perturbations in the direction of propagation and, periodic with finite wavelength perturbations in the transverse direction.
\end{enumerate}
\item \begin{enumerate} \item For all $k>0$ and $\b\neq0$, periodic traveling waves \eqref{E:w_ansatz} of \eqref{e:mKdVKP} are transversely unstable with respect to periodic perturbations in both directions and  long wavelength perturbations in the transverse direction.

\item $\b>0$, periodic traveling waves \eqref{E:w_ansatz} of \eqref{e:mKdVKP} are transversely unstable with respect to non-periodic (localized or bounded) perturbations in the direction of propagation of the wave and, periodic with finite wavelength perturbations in the transverse direction if 
\[
k>\left(\dfrac{4\g}{\b }\right)^{1/4}\] 

\end{enumerate}

\end{enumerate}
\end{corollary}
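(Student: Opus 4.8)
The plan is to view \eqref{e:mKdVKP} as the special case of the gRMKP equation \eqref{e:gRMKP} with $\alpha_1=0$, $\alpha_2=-1$ and with $\J$ the Fourier multiplier obtained by matching the dispersive term: writing $-\beta u_{xxx}=\J u_x$ forces the symbol to be $\beta k^2$, which we normalize to $\jmath(k)=1+\beta k^2$, the additive constant being a harmless Galilean shift that only changes $c_\mathfrak{g}$ and secures $\jmath(0)=1$ as Hypothesis~\ref{h:m}~J1 demands. With this dictionary in place, each of the four assertions reduces to quoting the matching one of Theorems~\ref{t:5}--\ref{t:8} and evaluating the elementary differences $\jmath(k)-\jmath(kn)$ that appear in their hypotheses.

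First I would verify Hypothesis~\ref{h:m}: J1 is immediate; J2 holds with exponent $\mathfrak{b}=2$ when $\beta>0$; and since $\jmath'(k)=2\beta k$ keeps a fixed sign on $k>0$, J3 holds, with $\jmath$ strictly increasing when $\beta>0$ and strictly decreasing when $\beta<0$. This monotonicity dichotomy is exactly what separates the $\beta>0$ and $\beta<0$ branches of the statement, and it guarantees that the small-amplitude expansion of Theorem~\ref{T:sol} and the collision and Krein-signature analysis of Section~\ref{sec:gRMKP} apply verbatim to the present $\jmath$.

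Then the four parts follow in turn. For (1)(a) with $\beta<0$, $\jmath$ is decreasing, so Theorem~\ref{t:7}(2) (the case $\alpha_2\neq0$) applies and yields transverse stability with respect to non-periodic perturbations in $x$ and periodic perturbations in $y$. For (1)(b) with $\beta\neq0$, Theorem~\ref{t:8} holds for every admissible $\jmath$ and gives transverse stability with respect to co-periodic perturbations in $x$ and finite-wavelength periodic perturbations in $y$. For (2)(a) with $\beta\neq0$, Theorem~\ref{t:5}(2) is precisely the case $\alpha_1=0$, $\alpha_2=-1$ and asserts instability for all $\jmath$ and all $k>0$; concretely, the long-wavelength reduction of Section~\ref{sec:gRMKP} produces $\lambda_\mathfrak{g}^2=-\rho^2\bigl(\rho^2-\tfrac32 k^2\varepsilon^2\bigr)+O(|\varepsilon|\rho^2(\rho^2+\varepsilon^2))$, so for $0<|\rho|<\sqrt{3/2}\,|\varepsilon|k$ one gets a real pair $\pm\lambda_\mathfrak{g}$, hence instability. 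Finally, for (2)(b) with $\beta>0$, $\jmath$ is increasing, so Theorem~\ref{t:6} gives instability with respect to non-periodic perturbations in $x$ and finite-wavelength periodic perturbations in $y$ as soon as $k^2(\jmath(k)-\jmath(k/2))>3\g$; since $\jmath(k)-\jmath(k/2)=\beta(k^2-k^2/4)=\tfrac34\beta k^2$, this reads $\tfrac34\beta k^4>3\g$, i.e. $k>(4\g/\beta)^{1/4}$, the stated threshold.

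The only step that is not a pure substitution is that Theorems~\ref{t:7} and \ref{t:8} are conditional on the one-dimensional $L^2(\mathbb{T})$-spectral stability of the small-amplitude periodic waves of the generalized Ostrovsky equation \eqref{e:gost} with $\alpha_1=0$, $\alpha_2=-1$. Accordingly, part (1) should either be stated under that hypothesis — as in Corollary~\ref{c:4} — or be supplemented by a brief verification that at $\rho=0$ no collision of eigenvalues of $\mathcal Q^\mathfrak{g}_\varepsilon(0)$ leaves the imaginary axis for small $\varepsilon$, i.e. the $\rho=0$ specialization of the computation in Section~\ref{ss:g1}. Once that dependence is settled, nothing further is required: every remaining step is a direct appeal to the already-established Theorems~\ref{t:5}--\ref{t:8}.
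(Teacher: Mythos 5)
Your proposal is correct and follows essentially the same route as the paper: identify \eqref{e:mKdVKP} as the case $\alpha_1=0$, $\alpha_2=-1$ with quadratic symbol, check Hypothesis~\ref{h:m}, and specialize Theorems~\ref{t:5}--\ref{t:8}, with the arithmetic $\jmath(k)-\jmath(k/2)=\tfrac34\beta k^2$ reproducing the threshold $k>(4\g/\b)^{1/4}$ exactly as intended. Your added caveat that part (1) implicitly inherits the one-dimensional spectral-stability assumption of Theorems~\ref{t:7} and \ref{t:8} (which the corollary as stated omits) is a fair and correct observation, not a gap in your argument.
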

For $\g=0$, all the results mentioned in Corollary \ref{c:5} are in accordance with the findings in \cite{Johnson2010TransverseEquation,Bhavna2022KDEquation} for mKP-II equation.
\subsection{RMILW-KP Equation}\label{ss:6}
The RMILW-KP equation can be derived from \eqref{e:gRMKP} by choosing,
\[
\jmath(k)= k \coth{k}
\]
The symbol $\jmath(k)$ satisfies Hypotheses~\ref{h:m} $J1$, $J2$ ($\mathfrak{b}=2$, $A_1=1$, and $A_2=2$), and $J3$ ($\jmath$ is strictly increasing for $k>0$). 
We can obtain the periodic solutions of rotation-modified ILW equation from \eqref{E:w_ansatz} by replacing $\jmath(k)$ with $k \coth{k}$. We can obtain transverse stability and instability of these solutions using Theorems~\ref{t:5}, \ref{t:6}, \ref{t:7} and \ref{t:8}.
\begin{corollary}[Transverse stability vs. instability of RMILW-KP]\label{c:RMKPILW}
For any $\varepsilon$ sufficiently small and $\g>0$, 
\begin{enumerate}
\item 
\begin{enumerate}
    \item For all $k>0$ and $\b<0$, periodic traveling waves \eqref{E:w_ansatz} of \eqref{e:RMKPW} are transversely stable with respect to either periodic or non-periodic (localized or bounded) perturbations in the direction of propagation and periodic perturbations in the transverse direction.
    \item For all $k>0$ and $\b>0$, periodic traveling waves \eqref{E:w_ansatz} of \eqref{e:gRMKP} are transversely stable with respect to periodic perturbations in the direction of propagation and, periodic with finite wavelength perturbations in the transverse direction.
\end{enumerate}
\item  \begin{enumerate}\item $\b>0$, periodic traveling waves \eqref{E:w_ansatz} of \eqref{e:RMKPW} are transversely unstable with  respect to periodic perturbations in both directions and long wavelength perturbations in the transverse direction if 
\[
k^2(2k\coth{2k}-k\coth{k})>\dfrac{3\g}{4\b}\]
\item $\b>0$, periodic traveling waves \eqref{E:w_ansatz} of \eqref{e:RMKPW} are transversely unstable with respect to non-periodic (localized or bounded) perturbations in the direction of propagation of the wave and, periodic with finite wavelength in the transverse direction if 
\[
k^2(k\coth{k}-k/2\coth{(k/2)})>\dfrac{3\g}{\b}\]
\end{enumerate}

\end{enumerate}
\end{corollary}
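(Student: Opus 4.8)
The plan is to recognize the RMILW-KP equation \eqref{e:RMKPW} as the instance of the generalized equation \eqref{e:gRMKP} with quadratic nonlinearity only ($\alpha_1=1$, $\alpha_2=0$) and dispersion multiplier having symbol $\b\,k\coth k$, and then to read off all four items of the corollary directly from Theorems \ref{t:5}--\ref{t:8} after specializing $\jmath$. Under this identification the only genuine work is (i) to verify the structural hypotheses on the symbol $k\coth k$, (ii) to record how the monotone direction of the effective multiplier depends on the sign of $\b$, and (iii) to convert the abstract thresholds of Theorems \ref{t:5} and \ref{t:6} into the explicit inequalities stated in the corollary.

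First I would check Hypothesis \ref{h:m} for $\jmath_0(k):=k\coth k$. It is real and even, $\lim_{k\to0}k\coth k=1$ gives J1, and $\coth k\to1$ as $k\to\infty$ gives $k\coth k\sim k$, so J2 holds (with $A_1=1$, $A_2=2$, as noted in Section \ref{ss:6}). For J3, $\frac{d}{dk}\bigl(k\coth k\bigr)=\coth k-\frac{k}{\sinh^2 k}=\frac{\frac{1}{2}\sinh 2k-k}{\sinh^2 k}>0$ for $k>0$, since $\sinh 2k>2k$; hence $k\coth k$ is strictly increasing on $(0,\infty)$, and therefore the effective gRMKP multiplier $\b\,k\coth k$ is strictly increasing for $\b>0$ and strictly decreasing for $\b<0$. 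With the hypotheses in place, Theorem \ref{T:sol} provides the small-amplitude family \eqref{E:w_ansatz} on which Theorems \ref{t:5}--\ref{t:8} act.

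Next I would dispatch the four items by substitution. For $\b<0$ the effective multiplier is decreasing and $\alpha_2=0$, so item (1)(a) is precisely Theorem \ref{t:7}(1); for $\b>0$, item (1)(b) is precisely Theorem \ref{t:8}, which holds for every $\jmath$ (for $\b<0$ this conclusion is already contained in (1)(a), so no separate statement is needed). For the instability items we are in the case $\b>0$, where the multiplier is increasing. Item (2)(a) is Theorem \ref{t:5}(1)(a): since $\jmath(2k)-\jmath(k)=\b\bigl(2k\coth 2k-k\coth k\bigr)>0$ by the monotonicity of $t\coth t$, the condition $\bigl|k^2(\jmath(2k)-\jmath(k))\bigr|>\frac{3\g}{4}$ rewrites as $k^2\bigl(2k\coth 2k-k\coth k\bigr)>\frac{3\g}{4\b}$. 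Item (2)(b) is Theorem \ref{t:6}: likewise $\jmath(k)-\jmath(k/2)=\b\bigl(k\coth k-\frac{k}{2}\coth\frac{k}{2}\bigr)>0$, so $k^2(\jmath(k)-\jmath(k/2))>3\g$ rewrites as $k^2\bigl(k\coth k-\frac{k}{2}\coth\frac{k}{2}\bigr)>\frac{3\g}{\b}$. The standing hypothesis of Theorems \ref{t:7}--\ref{t:8} --- $L^2(\mathbb T)$-spectral stability of the one-dimensional waves of the generalized Ostrovsky equation \eqref{e:gost}, here the rotation-modified ILW equation, under one-dimensional perturbations --- is inherited by the two stability items.

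I do not anticipate a genuine obstacle once Theorems \ref{t:5}--\ref{t:8} are in hand: the argument is bookkeeping plus the elementary inequality $\sinh 2k>2k$. The one point that deserves a sentence of care is that the coefficient $\b$ multiplies $\jmath$ from outside the normalized symbol $k\coth k$, so the monotone direction of the effective multiplier flips with the sign of $\b$; this is exactly why the stable/unstable dichotomy in the corollary is organized around $\b>0$ versus $\b<0$, paralleling the $\beta>0$ versus $\beta\leq0$ split in Theorems \ref{t:1}--\ref{t:4} for the RMKP equation itself.
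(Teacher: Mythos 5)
Your proposal is correct and follows essentially the same route as the paper: Section~\ref{ss:6} likewise verifies Hypothesis~\ref{h:m} for $k\coth k$ and then reads the four items off Theorems~\ref{t:5}--\ref{t:8} by substitution, exactly as you do. If anything you are more careful than the paper on two minor points --- you explicitly fold $\b$ into the effective multiplier so that its monotonicity (and hence the applicability of Theorem~\ref{t:7} versus Theorems~\ref{t:5}--\ref{t:6}) flips with $\operatorname{sgn}\b$, and you correctly identify the growth of $k\coth k$ as linear (the paper states $\mathfrak{b}=2$ in J2, which appears to be a slip) --- but these refinements do not change the argument.
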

Note that, for $\g=0$, all the results mentioned in Corollary \ref{c:RMKPILW} agree with the results in \cite{Bhavna2022TransverseEquation} for KP-ILW equation.
\subsection{RM-Whitham-KP Equation}\label{ss:7}
The RM-Whitham-KP equation can be derived from \eqref{e:gRMKP} by choosing,
\[
\jmath(k)= \sqrt{\frac{\tanh k}{k}} 
\]
The symbol $\jmath(k)$ satisfies Hypotheses~\ref{h:m} $J1$, $J2$ with $\mathfrak{b}= -\frac{1}{2}$, $A_1=1$, and $A_2=2$, and $J3$ as $\jmath$ is strictly decreasing for $k>0$. 
We can obtain the periodic solutions of rotation-modified Whitham equation from \eqref{E:w_ansatz} by replacing $\jmath(k)$ with $\sqrt{\frac{\tanh k}{k}}$. We can obtain transverse stability and instability of these solutions using Theorems~\ref{t:5}, \ref{t:6}, \ref{t:7} and \ref{t:8}.
\begin{corollary}[Transverse stability vs. instability of RM-Whitham-KP]\label{c:RMKPwh}
For any $\varepsilon$ sufficiently small and $\g>0$, 
\begin{enumerate}
\item 
\begin{enumerate}
    \item For all $k>0$ and $\b>0$, periodic traveling waves \eqref{E:w_ansatz} of \eqref{e:RMKPW1} are transversely stable with respect to either periodic or non-periodic (localized or bounded) perturbations in the direction of propagation and periodic perturbations in the transverse direction.
    \item For all $k>0$ and $\b<0$, periodic traveling waves \eqref{E:w_ansatz} of \eqref{e:RMKPW1} are transversely stable with respect to periodic perturbations in the direction of propagation and, periodic with finite wavelength perturbations in the transverse direction.
\end{enumerate}
\item  \begin{enumerate}\item $\b<0$, periodic traveling waves \eqref{E:w_ansatz} of \eqref{e:RMKPW1} are transversely unstable with  respect to periodic perturbations in both directions and long wavelength perturbations in the transverse direction if 
\[
k^2\left(\sqrt{\frac{\tanh k}{k}}-\sqrt{\frac{\tanh 2k}{2k}}\right)>\dfrac{3\g}{4|\b|}\]
\item  $\b<0$, periodic traveling waves \eqref{E:w_ansatz} of \eqref{e:RMKPW1} are transversely unstable with  respect to non-periodic (localized or bounded) perturbations in the direction of propagation of the wave and, periodic with finite wavelength perturbations in the transverse direction if 
\[
k^2\left(\sqrt{\frac{\tanh k/2}{k/2}}-\sqrt{\frac{\tanh k}{k}}\right)>\dfrac{3\g}{|\b|}\]
\end{enumerate}

\end{enumerate}
\end{corollary}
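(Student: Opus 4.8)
The strategy is to recognise the RM-Whitham-KP equation \eqref{e:RMKPW1} as a particular instance of the gRMKP equation \eqref{e:gRMKP} and then read the four assertions off from Theorems~\ref{t:5}--\ref{t:8}. Concretely, \eqref{e:RMKPW1} is \eqref{e:gRMKP} with $\alpha_1=1$, $\alpha_2=0$ and dispersion multiplier of symbol $\b\,\jmath(k)$, where $\jmath(k)=\sqrt{\tanh k/k}$. The text already records that $\jmath$ satisfies Hypothesis~\ref{h:m}; in addition, $k\mapsto\tanh k/k$ is strictly decreasing on $(0,\infty)$ (equivalently $\sinh(2k)>2k$ for $k>0$), so $\jmath$ is strictly decreasing there, and hence the effective symbol $\b\jmath$ is strictly decreasing when $\b>0$ and strictly increasing when $\b<0$. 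I would then split the argument according to the sign of $\b$, i.e. according to the monotonicity of the effective dispersion, feeding the symbol $\b\jmath$ into Theorems~\ref{t:5}--\ref{t:8} in each case.

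\emph{Case $\b>0$.} Here $\b\jmath$ is decreasing, so the branch $\alpha_1=1$, $\alpha_2=0$, ``$\jmath$ decreasing'' of Theorem~\ref{t:5}(1)(b) rules out any long-wavelength transverse instability with respect to co-periodic perturbations in the direction of propagation, and Theorem~\ref{t:7}(1), applicable since $\alpha_2=0$, yields item~(1)(a) directly; Theorem~\ref{t:8} is consistent with this. These conclusions carry the standing hypothesis---inherited from Theorems~\ref{t:7}--\ref{t:8}---that the underlying one-dimensional rotation-modified Whitham waves are $L^2(\mathbb{T})$-spectrally stable.

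\emph{Case $\b<0$.} Now $\b\jmath$ is increasing. Theorem~\ref{t:8}, valid for every $\jmath$, yields transverse stability with respect to periodic perturbations in the direction of propagation and periodic finite/short-wavelength transverse perturbations, which is item~(1)(b). For the instability statements the hypothesis ``$\jmath$ increasing'' of Theorems~\ref{t:5}(1)(a) and \ref{t:6} is met by $\b\jmath$, and it only remains to rewrite the thresholds: since $\jmath(2k)<\jmath(k)$ and $\b<0$, one has $|k^2((\b\jmath)(2k)-(\b\jmath)(k))|=k^2|\b|(\jmath(k)-\jmath(2k))$, so the condition $|k^2((\b\jmath)(2k)-(\b\jmath)(k))|>3\g/4$ of Theorem~\ref{t:5}(1)(a) becomes $k^2(\sqrt{\tanh k/k}-\sqrt{\tanh 2k/(2k)})>3\g/(4|\b|)$, which is item~(2)(a); likewise $k^2((\b\jmath)(k)-(\b\jmath)(k/2))=k^2|\b|(\jmath(k/2)-\jmath(k))$, so the condition $k^2((\b\jmath)(k)-(\b\jmath)(k/2))>3\g$ of Theorem~\ref{t:6} becomes $k^2(\sqrt{\tanh(k/2)/(k/2)}-\sqrt{\tanh k/k})>3\g/|\b|$, which is item~(2)(b).

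The only step needing any care is the sign bookkeeping once the prefactor $\b$ is absorbed into the symbol: it is precisely the strict decrease of $\jmath(k)=\sqrt{\tanh k/k}$ that makes RM-Whitham-KP behave oppositely to RMKP, transverse instability being confined to the negative-dispersion regime $\b<0$. Beyond that elementary monotonicity fact, no new spectral or perturbative analysis is required---the corollary is a direct consequence of Theorems~\ref{t:5}--\ref{t:8}.
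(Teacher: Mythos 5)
Your proposal is correct and follows essentially the same route as the paper, which simply verifies Hypothesis~\ref{h:m} for $\jmath(k)=\sqrt{\tanh k/k}$ and invokes Theorems~\ref{t:5}--\ref{t:8}. Your explicit sign bookkeeping for the effective symbol $\b\jmath$ (decreasing for $\b>0$, increasing for $\b<0$) and the resulting rewriting of the thresholds is exactly the translation the paper leaves implicit, and it is carried out correctly.
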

For $\g=0$, all the results mentioned in Corollary \ref{c:RMKPwh} agree with the findings in \cite{Bhavna2022TransverseEquation} for KP-Whitham equation.

\subsection*{Acknowledgement} 
Bhavna and AKP are supported by the Science and Engineering Research Board (SERB), Department of Science and Technology (DST), Government of India under grant 
SRG/2019/000741. Bhavna is also supported by Junior Research Fellowships (JRF) by University Grant Commission (UGC), Government of India.
AS thanks the Institute for Computational and Experimental Research in Mathematics, Providence, RI, being resident during the ``Hamiltonian Methods in Dispersive and Wave Evolution Equations" program supported by NSF-DMS-1929284.
\subsection*{Declaration of interests} The authors report no conflict of interest.
\subsection*{Data availability}
This article has no additional data.
\bibliographystyle{amsalpha}
\bibliography{RMKP.bib}
\end{document}